\documentclass[10pt]{amsart}
\usepackage[T1]{fontenc}
\usepackage{amssymb}
\usepackage{amsmath}
\usepackage[all]{xy}
\usepackage{graphicx}
\usepackage{multicol}
\usepackage{xcolor}
\usepackage{hyperref}

\theoremstyle{plain}
\newtheorem{thm}{Theorem}
\newtheorem{coro}[thm]{Corollary}
\newtheorem*{thmA}{Theorem A}
\newtheorem*{thmB}{Theorem B}
\newtheorem*{thmC}{Theorem C}
\newtheorem*{thmD}{Theorem D}
\newtheorem*{thmE}{Theorem E}
\newtheorem*{thmF}{Theorem F}
\newtheorem*{conv}{Convention}
\newtheorem{lem}[thm]{Lemma}

\newtheorem{prop}[thm]{Proposition}
\newtheorem{conj}[thm]{Conjecture}
\newtheorem{ass}[thm]{Standing Hypothesis}

\theoremstyle{definition}
\newtheorem{ques}[thm]{Question}
\newtheorem{ex}[thm]{Example}
\newtheorem{defn}[thm]{Definition}

\theoremstyle{remark}

\newtheorem{rem}[thm]{Remark}

\numberwithin{thm}{section}
\numberwithin{equation}{section}

\newcommand{\F}{\mathbb{F}}
\newcommand{\Z}{\mathbb{Z}}
\newcommand{\N}{\mathbb{N}}
\newcommand{\Q}{\mathbb{Q}}
\newcommand{\R}{\mathbb{R}}
\newcommand{\C}{\mathbb{C}}
\newcommand{\K}{F}

\newcommand{\ext}{\mathrm{Ext}}
\newcommand{\Span}{\mathrm{span}}
\newcommand{\trg}{\mathrm{trg}}
\newcommand{\ind}{\mathrm{ind}}

\newcommand{\dbl}{[\![}
\newcommand{\dbr}{]\!]}

\newcommand{\dbml}{\langle\!\langle}
\newcommand{\dbmr}{\rangle\!\rangle}
\newcommand{\gr}{{\rm gr}}

\newcommand{\sU}{{\mathcal U}}
\newcommand{\FpG}{\F_p[G]}
\newcommand{\FppG}{\F_p\dbl G\dbr}
\newcommand{\FpX}{\F_p\langle X\rangle}
\newcommand{\FppX}{\F_p\dbml X \dbmr}

\renewcommand{\ker}{\mathrm{Ker}}

\newcommand{\Hom}{\mathrm{Hom}}
\newcommand{\Ext}{\mathrm{Ext}}

\newcommand{\dd}{\mathrm{d}}

\newcommand{\slot}{\,\underbar{\;\;}\,}
\newcommand{\gen}[1]{\langle #1\rangle}

\DeclareMathOperator{\Gal}{Gal}

\begin{document}

\title[Koszul algebras and quadratic duals in Galois cohomology]
 {Koszul algebras and quadratic duals  \\ in Galois cohomology}

\author{Jan~Min\'a\v{c}}
\author{Federico~Pasini}
\author{Claudio~Quadrelli}
\author{Nguy\~{\^e}n~Duy~T\^{a}n}

\dedicatory{To the memory of Jean-Louis Koszul.}

\date{\today}

\address{Department of Mathematics \\ University of Western Ontario \\ Middlesex College, London ON, Canada}
\email{minac@uwo.ca}
\address{Department of Applied Mathematics \\ University of Western Ontario \\ Middlesex College, London ON, Canada}
\email{fpasini@uwo.ca}
\address{Department of Mathematics and Applications\\ University of Milano-Bicocca\\  Via R.Cozzi 55 -- Ed.~U5, Milan, Italy-EU}
\email{claudio.quadrelli@unimib.it}
\address{School of Applied Mathematics and Informatics \\ Hanoi University of Science and Technology \\ 1 Dai Co Viet Road, Hanoi, Vietnam}
\email{tan.nguyenduy@hust.edu.vn}

\begin{abstract}
We investigate the Galois cohomology of finitely generated maximal pro-$p$ quotients of absolute Galois groups.
Assuming the well-known conjectural description
of these groups, we show that Galois cohomology has the PBW property. Hence in particular it is a Koszul algebra.
This answers positively a conjecture by Positselski in this case. We also provide an analogous unconditional result
about Pythagorean fields. Moreover, we establish some results that relate the quadratic dual of Galois cohomology
with the $p$-Zassenhaus filtration on the group. This paper also contains a survey of Koszul property in Galois cohomology
and its relation with absolute Galois groups.
\end{abstract}

\subjclass[2010]{Primary 12G05; Secondary 17A45, 20F40, 20F14}


\thanks{The first author is partially supported by the Natural Sciences and Engineering Research Council of Canada grant R0370A01.
The third author was partially supported by the Israel Science Fundation grant no.~152/13 and by a fellowship ``Giovani Ricercatori Protagonisti'' funded by Fondazione~CR Firenze.
The fourth author is partially supported by the Vietnam National Foundation for Science and Technology Development (NAFOSTED) under grant number { 101.04-2019.314.}}

\maketitle

\section{Introduction}\label{s:intro}
Let $p$ be a prime number and $\K$ be a field. The \emph{absolute Galois group} $G_{\K}=\Gal(\K_{\mathrm{sep}}/\K)$ is the Galois group of the maximal separable extension $\K_{\mathrm{sep}}$ of $\K$. This group contains all the Galois-theoretic information about $\K$, as any finite Galois group of an extension of $\K$ is a quotient of $G_\K$.

Despite such a central role in Galois theory, absolute Galois groups of fields remain rather mysterious.
They are profinite groups, and one of the main problems in current research in Galois theory is
to determine which profinite groups are absolute Galois groups of fields.
One way to investigate the structure of profinite groups is Galois cohomology.
This is a cohomology theory based on continuous cochains and coboundaries (cf.~\cite[Ch.~2]{serre}).
The Galois cohomology of $\K$ with coefficients in the finite field $\F_p=\Z/p\Z$ is a graded algebra
\[H^\bullet(G_{\K},\F_p)=\bigoplus_{n\geq0}H^n(G_{\K},\F_p),\]
with respect to the homological degree and the graded-commutative cup product (cf.~\cite[Ch.~1, \S~4]{nsw:cohm})
\[\cup \colon H^r(G_{\K},\F_p)\times H^s(G_{\K},\F_p)\longrightarrow H^{r+s}(G_{\K},\F_p),\qquad r,s\geq0.\]

Throughout the paper, we adopt the following
\begin{ass}\label{ass}
For $p$ odd, $\K$ contains a primitive $p$\textsuperscript{th} root of unity $\zeta_p$.
\end{ass}
Some results require a stronger assumption in the case $p=2$:
\begin{ass}\label{ass2}
For $p=2$, $\K$ contains $\sqrt{-1}$.
Equivalently, in purely group-theoretic terms, the image of the \emph{orientation} associated to a pro-$2$ group $G$ is contained in $\{1+4\lambda\mid \lambda\in\Z_2\}$ (see~Definition~\ref{defn:cyclochar}).
\end{ass}

Recently, M.~Rost and V.~Voevodsky completed the proof of the long-standing Bloch-Kato conjecture
(cf.~\cite{suslin:norm, haeseweibel, voevodsky:motivic, voevodsky:BK}, see also \cite[\S~VI.4]{nsw:cohm}).
Under our Hypothesis \ref{ass} this theorem implies that $H^\bullet(G_{\K},\F_p)$
is generated by elements of degree $1$ and its relations are generated by homogeneous relations of degree $2$.
Such algebras are called \emph{quadratic algebras} (cf.~\cite[\S~1.2]{PoliPosi}).

Locally finite-dimensional (cf.~\cite[Page 1]{PoliPosi} and Definition~\ref{defi:algebras})
quadratic algebras come equipped with a duality.
The \emph{quadratic dual} of such an algebra $A_\bullet$ is a quadratic algebra generated over the same field
by the dual space of the space of generators of $A_\bullet$.
Its relators
form the orthogonal complement of the space of the relators of $A_\bullet$ (see Definition~\ref{defi:quadratic algebras}).
The double quadratic dual $(A_\bullet^!)^!$ is isomorphic to $A_\bullet$ as a graded algebra.

Among quadratic algebras, the significant class of Koszul algebras has been singled out in \cite{priddy}. 
These are the graded algebras for which the ground field has a linear free resolution as trivial module,
that is a resolution of free graded modules $(P_i,d_i)$ with each $P_i$ generated in degree $i$.
They are characterised by an uncommonly nice cohomological behaviour (cf.~\S~\ref{sec:quadratic}).

For a general graded algebra $A_\bullet=\bigoplus_{n\geq0}A_n$ over a field $\Bbbk$, the $\Bbbk$-cohomology
is defined as the direct sum of the derived functors of the functor $\Hom_{A_\bullet}(\slot,\Bbbk)$ evaluated on $\Bbbk$, that is $\ext_{A_\bullet}(\Bbbk,\Bbbk)=\bigoplus_{i\in\N}\ext_{A_\bullet}^i(\Bbbk,\Bbbk)$.
Finding an explicit description of the cohomology of an arbitrary graded algebra is generally a hopeless battle. But the cohomology of a Koszul $\Bbbk$-algebra $A_\bullet$ is just its \emph{quadratic dual} $A_\bullet^!$. This also means that Koszul algebras can be reconstructed from their cohomology. There is not the typical loss of information in passing from an algebraic or topological object to its cohomology.

Koszul algebras arise in various fields of mathematics, such as representation theory, noncommutative algebra,
quantum groups, noncommutative geometry, algebraic geometry and topology
(cf., e.g., \cite{begiso,froberg75,kohno85,kohno83,lodval}). 
Moreover, Koszul algebras have been studied in the context of Galois cohomology by L.~Positselski and A.~Vishik
(cf.~\cite{posivis:koszul}).
This was considerably extended in further remarkable papers by Positselski
(cf.~\cite{positselsky:koszul,positselsky:koszul2}).
In particular, in \cite[\S~0.1]{positselsky:koszul2} he explicitly formulated a general conjecture, of which the following is a special case.

\begin{conj}[Positselski]
 Let $\K$ be a field containing a primitive $p$\textsuperscript{th} root of unity.
Then the algebra $H^\bullet(G_{\K},\F_p)$ is Koszul.
\end{conj}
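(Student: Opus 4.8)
The plan is to prove the stronger statement that $H^\bullet(G_\K,\F_p)$ has the PBW property, from which Koszulity follows formally, since every PBW algebra is Koszul (see \cite{PoliPosi}). First I would reduce to a pro-$p$ problem: the inflation map identifies $H^\bullet(G_\K,\F_p)$ with $H^\bullet(G_\K(p),\F_p)$, where $G_\K(p)$ is the maximal pro-$p$ quotient of $G_\K$, and under Hypothesis \ref{ass} the latter algebra is quadratic by Bloch--Kato. Because arbitrary maximal pro-$p$ Galois groups cannot be described unconditionally, I would assume the well-known conjectural (elementary-type) description: every finitely generated such group is built, starting from $\Z_p$, free pro-$p$ groups and Demushkin groups, by iterating free pro-$p$ products and certain cyclotomic semidirect products. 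The argument is therefore conditional on this group-theoretic conjecture.

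The heart of the proof is an induction along this construction. I would first record the cohomology of the building blocks and verify the PBW property for each: for a free pro-$p$ group the algebra is concentrated in degrees $0$ and $1$; for $\Z_p$ it is generated by a single degree-$1$ class with the evident quadratic relation; and for a Demushkin group one obtains the explicit quadratic algebra with a single nondegenerate relation in degree $2$. In each case I would exhibit an ordered monomial basis satisfying the PBW straightening rules. The inductive step requires showing that the PBW property is preserved under the two operations on quadratic algebras induced by the group-theoretic ones: free pro-$p$ products correspond to a coproduct-type gluing (the cohomologies are glued along $H^0=\F_p$ and summed in positive degrees), while the cyclotomic extensions correspond to a twisted tensor/semidirect construction. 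I would then check that PBW bases on the factors assemble into a PBW basis on the result, so that the class of PBW algebras is closed under both operations.

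A complementary route, which also illuminates the quadratic-dual theme of the paper, is to pass to the Koszul dual. Since a quadratic algebra is Koszul if and only if its quadratic dual is, it suffices to prove that $H^\bullet(G_\K(p),\F_p)^!$ is Koszul. Under the present hypotheses this dual is naturally identified with the associated graded algebra $\gr\,\FppG$ of the completed group algebra, taken with respect to the filtration by powers of the augmentation ideal, i.e. the $p$-Zassenhaus filtration. A Jennings--Quillen type theorem furnishes this graded algebra with an honest PBW basis coming from the restricted enveloping algebra of the associated graded restricted Lie algebra; this gives Koszulity of the dual, and hence of $H^\bullet(G_\K(p),\F_p)$ itself.

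The main obstacle is twofold. Conceptually, the dependence on the conjectural description is unavoidable, so the theorem is established only modulo that conjecture; removing the assumption is out of reach. Technically, the delicate point is the Demushkin case and its interaction with the semidirect-product step: one must choose the monomial ordering so that the single degree-$2$ relation straightens without generating spurious higher-degree obstructions, i.e. so that every overlap ambiguity resolves in the sense of the diamond lemma, and one must verify that the twisted tensor structure does not destroy this. Once the building blocks and the two closure operations are settled, the induction—and with it the conditional proof of the conjecture—goes through; the case $p=2$, together with Pythagorean fields, is handled separately, where an unconditional statement is available.
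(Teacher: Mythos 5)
Your proposal follows essentially the same route as the paper: conditional on the Elementary Type Conjecture, you reduce to $G_\K(p)$ via inflation and the collapsing Hochschild--Serre spectral sequence, verify the PBW property (hence Koszulity) for the base cases --- free pro-$p$ groups, and Demushkin groups, where one orders the generators so that the leading monomial $x_ix_j$, $i\neq j$, of the single dual relation creates no critical monomials --- and then propagate PBW through free pro-$p$ products (direct sum of cohomologies) and cyclotomic semidirect products (twisted, i.e.\ skew-symmetric tensor, extensions), which is exactly the paper's proof of Theorems A and B. One caveat worth noting: your ``complementary route'' overstates what Jennings--Quillen gives, since for a general pro-$p$ group $\sU(L(G))\cong\gr\F_p\dbl G\dbr$ need not be quadratic or PBW, and it is identified with $H^\bullet(G_\K(p),\F_p)^!$ only up to an epimorphism $H^\bullet(G,\F_p)^!\twoheadrightarrow\gr\F_p\dbl G\dbr$ that is an isomorphism in degrees $\leq 2$ (full isomorphism requiring a quadratically defined presentation), so the dual identification must itself be established case by case along the same induction, as the paper does.
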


In \cite[Theorem~1.3]{positselsky:koszul} it is shown that the \emph{Koszulity} --- i.e., the property of being Koszul --- of the reduction modulo $p$
of the Milnor K-theory, along with the bijectivity of the corresponding norm residue map in degree $2$
and the injectivity of the one in degree $3$ would give an easier and natural proof of Bloch-Kato conjecture for the prime $p$. Positselski's Conjecture can be seen as a strenghtening of Bloch-Kato Conjecture. %

In this paper we prove that in several cases the Galois cohomology and its quadratic dual have the stronger PBW property (cf.~\cite[Chapter 4]{lodval}).

There is a conjectural description of maximal pro-$p$ quotients of absolute Galois groups that are finitely generated and satisfy our Hypothesis \ref{ass}. This description roughly says that all such groups can be constructed inductively from finitely generated free pro-$p$ groups and Galois groups of maximal $p$-extensions of local fields using free products and certain semidirect products.
Slightly more general groups constructed this way are called \emph{elementary type} pro-$p$ groups (see \S~\ref{ssec:ET groups} for details) and the former conjecture is known as Elementary Type Conjecture,
in short ETC, formulated by I.~Efrat (cf.~\cite{ido:ETC}).

Our main results are the following.

\begin{thmA}
If $G$ is an elementary type pro-$p$ group, then $H^\bullet(G,\F_p)$ is PBW, and hence Koszul.
\end{thmA}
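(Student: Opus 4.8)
The plan is to induct on the inductive construction of elementary type pro-$p$ groups. Because every such group is assembled from finitely generated free pro-$p$ groups and Demushkin groups by repeatedly forming free pro-$p$ products and the prescribed (cyclotomic) semidirect products, it suffices to (i) establish the PBW property of $H^\bullet(\slot,\F_p)$ on the two classes of building blocks, and (ii) show that the PBW property is preserved under each of the two operations. Since PBW algebras are Koszul, the Koszul conclusion then follows at once.

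For the base of the induction I would record two computations as lemmas. If $G$ is a finitely generated free pro-$p$ group then $H^\bullet(G,\F_p)=\F_p\oplus V$ with $V=H^1(G,\F_p)$ and all products of degree-one classes zero; here every ordered basis of $V$ is trivially a PBW basis. If $G$ is a Demushkin group then $H^\bullet(G,\F_p)$ is the quadratic algebra $\F_p\oplus V\oplus H^2$ with $V=H^1(G,\F_p)$, $\dim_{\F_p}H^2=1$, and cup product given by the nondegenerate alternating form attached to the single defining relation (with the usual modification when $p=2$). Putting the generators into symplectic normal form then exhibits an explicit PBW basis.

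For the inductive step I treat the two operations separately. For a free pro-$p$ product $G=G_1\amalg G_2$ the coproduct (Mayer--Vietoris) description gives $H^n(G)\cong H^n(G_1)\oplus H^n(G_2)$ for $n\geq1$, with all cross cup products in positive degree vanishing, so $H^\bullet(G)$ is the one-point gluing of $H^\bullet(G_1)$ and $H^\bullet(G_2)$; concatenating PBW bases of the two factors, ordered so that the two generating sets are kept apart, yields a PBW basis, proving that the gluing of PBW algebras is PBW. For the semidirect product $G=H\rtimes\Z_p$ occurring in the ET construction I would compute $H^\bullet(G,\F_p)$ from the Lyndon--Hochschild--Serre spectral sequence, which degenerates here because $\Z_p$ has cohomological dimension $1$, realizing $H^\bullet(G)$ as a twisted tensor product of $H^\bullet(H)$ with $H^\bullet(\Z_p)=\F_p[\chi]/(\chi^2)$. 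Adjoining the new degree-one class $\chi$ and ordering it last, the only additional rewriting rules are the quadratic commutation relations induced by the action, and I claim these extend a PBW basis of $H^\bullet(H)$ to one of $H^\bullet(G)$.

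The crux is precisely this last claim: one must check that the quadratic relations produced by the $\Z_p$-action are compatible with the rewriting system of $H^\bullet(H)$, i.e. that the combined relations form a quadratic Gröbner basis (equivalently, that the relevant twisted tensor product of a PBW algebra with the exterior algebra on one generator is again PBW). I expect this confluence/overlap verification --- isolated once as a structural lemma on twisted tensor products of PBW algebras and then applied at each semidirect step of the induction --- to be the main technical obstacle, the subtlety being that the action can mix the new generator nontrivially with the old ones, so that one must confirm that no new irreducible overlaps appear in degree three.
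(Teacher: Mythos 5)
Your overall architecture coincides with the paper's proof: induction along the inductive definition of elementary type pairs, the trivial algebra for free pro-$p$ groups, an explicit normal form for the single Demushkin relation, and the identification $H^\bullet(G_1\ast_p G_2,\F_p)\cong H^\bullet(G_1,\F_p)\sqcap H^\bullet(G_2,\F_p)$ together with the closure of PBW algebras under direct sums. (For Demushkin groups the paper actually establishes PBW on the dual side first --- $\gr\FppG$ is a one-relator quadratic algebra whose leading monomial $X_iX_j$, $i\neq j$, admits no critical monomials, Lemma \ref{lem:one-relator PBW} --- and then transfers PBW to $H^\bullet(G,\F_p)$ via Remark \ref{rem:PBW}; your direct symplectic-normal-form check is an acceptable variant.)

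The genuine gap is in your semidirect product step, where you have the extension backwards. In the elementary type construction the operation is $\Z_p^m\rtimes G_0$: the \emph{new} free abelian group $\Z_p^m$ is the normal subgroup, and the \emph{previously constructed} group $G_0$ is the quotient, acting through the cyclotomic character (Definition \ref{def:semidirect prod}); it is not $H\rtimes\Z_p$ with a procyclic group on top. Consequently your Lyndon--Hochschild--Serre degeneration argument ``because $\Z_p$ has cohomological dimension $1$'' addresses the wrong spectral sequence: for the actual extension the $E_2$-term is $H^r(G_0,H^s(\Z_p^m,\F_p))$, the quotient $G_0$ satisfies no cohomological dimension bound, and degeneration is not automatic. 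The paper instead invokes Wadsworth's computation (Proposition \ref{prop:fibreprod cohomology}), which identifies $H^\bullet(\Z_p^m\rtimes G_0,\F_p)$ with the twisted extension $H^\bullet(G_0,\F_p)[J;t]$ on $m$ new degree-one generators (not one); and since elementary type pairs have $\im\chi\subseteq 1+p\Z_p$, the twisting class $t$ vanishes, so the answer is the skew-symmetric tensor product $\Lambda_\bullet(\F_p^m)\otimes^{-1}H^\bullet(G_0,\F_p)$. This also dissolves what you flag as the main technical obstacle: in cohomology the action produces no relations mixing new and old generators beyond plain skew-commutation and $x_j^2=0$, so PBW follows from the known closure of PBW algebras under $\otimes^{-1}$ (Example \ref{ex:PBW}, after \cite[\S~4.4]{PoliPosi}) with no new degree-three overlaps to verify. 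The genuinely twisted confluence analysis (relations $x_j^2=tx_j$ with $t\neq0$) is only needed outside Hypothesis \ref{ass}, namely for the Pythagorean case of Theorem D, where the paper carries out the full rewriting-graph verification. Finally, note that the induction must be performed on cyclotomic pairs $(G,\chi)$ rather than on bare groups, since the semidirect operation depends on $\chi$; your sketch gestures at this but should carry the character through the induction explicitly.
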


\begin{rem}
In Theorem A we can replace $G$ by any absolute Galois group $G_F$ such that its maximal pro-$p$ quotient $G_F(p)$ is of elementary type. This follows from the Hochschild-Serre spectral sequence associated to the exact sequence
\[
\xymatrix{
1\ar[r] & \ker(\pi)\ar[r] & G_F\ar^-\pi[r] & G_F(p)\ar[r] & 1,
}
\]
where $\pi$ is the canonical projection.
Indeed, $\ker(\pi)\cong\Gal(F_{\mathrm{sep}}/F(p))$, where $F(p)$ is the compositum
of all Galois extensions of $F$ of degree a power of $p$.
By Bloch-Kato Conjecture, $H^n(\Gal(F_{\mathrm{sep}}/F(p)),\F_p)=0$ for all $n>0$,
so the Hochschild-Serre spectral sequence collapses at the second page.
As a consequence, the inflation map $\mathrm{inf}\colon H^\bullet(G_F(p),\F_p)\to H^\bullet(G_F,\F_p)$ is an isomorphism.
(This argument is well-known to the experts, cf., e.g., \cite[Proposition~5.9, p.~45]{AEJ} and \cite[\S~24.3]{ido:miln}.)
\end{rem}

The quadratic dual of cohomology is related to the successive factors of a descending series on $G$,
the $p$-Zassenhaus filtration (see Definition~\ref{defi:zass}).
Let $\sU(L(G))$ be the restricted universal enveloping algebra of the restricted Lie algebra $L(G)$
given by the $p$-Zassenhaus filtration on $G$ (see \S~\ref{ssec:restricted}). 

\begin{thmB}
If $G$ is an elementary type pro-$p$ group, satisfying Hypothesis \ref{ass2} if $p=2$, then the quadratic dual of $H^\bullet(G,\F_p)$ is
$\sU(L(G))$ and it is PBW, and hence Koszul.
\end{thmB}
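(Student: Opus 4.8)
The plan is to deduce Theorem B from Theorem A by means of quadratic duality, supplemented by the classical dictionary between the mod-$p$ cohomology of a pro-$p$ group and the graded algebra attached to its $p$-Zassenhaus filtration. Write $A_\bullet=H^\bullet(G,\F_p)$; by Bloch–Kato this algebra is quadratic, and by Theorem A it is PBW. The first, purely formal, step is to invoke the general fact that the quadratic dual of a PBW algebra is again PBW (see \cite[Chapter 4]{lodval} or \cite{PoliPosi}): a quadratic PBW basis of $A_\bullet$ determines, by orthogonality of the relator spaces, a quadratic PBW basis of $A_\bullet^!$. This already yields the last assertion of the theorem, namely that $A_\bullet^!$ is PBW and hence Koszul; what remains is the identification $A_\bullet^!\cong\sU(L(G))$.

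For the identification I would first set up the dictionary in low degrees. In degree $1$ the Frattini quotient $L_1(G)=G/G^p[G,G]$ is canonically dual to $H^1(G,\F_p)$, so $A_\bullet^!$ and $\sU(L(G))$ are generated by the same space $L_1(G)$; here one uses that $L(G)$, arising from the $p$-Zassenhaus filtration, is generated in degree $1$, so that $\sU(L(G))\cong\gr\FppG$ is a quotient $T(L_1(G))/(S)$ of the tensor algebra on $L_1(G)$ by a homogeneous ideal $S$. The crucial point is the degree-$2$ duality: the space $S_2\subseteq L_1(G)^{\otimes 2}$ of quadratic relations of $\gr\FppG$ is exactly the orthogonal complement of the space $R\subseteq H^1(G,\F_p)^{\otimes 2}$ of cup-product relations of $A_\bullet$. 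This is the standard pairing between $H^2(G,\F_p)$ and the degree-two initial forms of a minimal pro-$p$ presentation of $G$. Consequently $A_\bullet^!=T(L_1(G))/(R^\perp)=T(L_1(G))/(S_2)$ is precisely the \emph{quadratic cover} of $\sU(L(G))$, and there is a canonical surjection $A_\bullet^!\twoheadrightarrow\sU(L(G))$.

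It remains to prove that this surjection is an isomorphism, equivalently that $\sU(L(G))$ is quadratic, i.e.\ that the ideal $S$ is generated by $S_2$. This is the heart of the matter and the step I expect to be the main obstacle. I see two routes. The first is via Hilbert series: Koszulity of $A_\bullet$ gives $H_{A_\bullet^!}(t)=H_{A_\bullet}(-t)^{-1}$, while the Jennings–Quillen product formula expresses $H_{\sU(L(G))}(t)$ through the dimensions $\dim_{\F_p}L_n(G)$; establishing the Euler-characteristic identity $H_{\sU(L(G))}(t)\cdot H_{A_\bullet}(-t)=1$ then forces the two series to agree, and since $G$ is finitely generated all graded pieces are finite-dimensional, so the surjection above, already an isomorphism in degree $1$, becomes an isomorphism in every degree. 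The second, more robust route is to rerun the induction on the elementary type construction used for Theorem A, now on the group-algebra side: verify quadraticity of $\sU(L(G))$ for the building blocks (free pro-$p$ groups, where $\sU(L(G))$ is the free associative algebra $\FpX$, and Demushkin groups), and check that it is preserved by free products and by the admissible semidirect products. The delicate case is, as for Theorem A, the semidirect product: one must control how the $p$-Zassenhaus filtration, and hence the relator space $S$, behaves under the twisting, and confirm that no new relations of degree $\geq 3$ are created. Once quadraticity of $\sU(L(G))$ is in hand, the canonical surjection $A_\bullet^!\to\sU(L(G))$ is an isomorphism, completing the proof.
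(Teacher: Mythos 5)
Your route (b) is, in substance, the paper's own proof: Section \ref{sec:A,B,C} establishes Theorem B by exactly the induction you sketch, computing $\gr\FppG\cong\sU(L(G))$ (via Jennings's Theorem \ref{thm:LG grFpG}) for the building blocks --- the Magnus embedding for free groups (Theorem \ref{thm:free koszul}), and mildness for Demushkin groups (Theorem \ref{thm:demushkin koszul}), which yields the one-relator quadratic presentation by the initial form of the Demushkin relator --- and then under the two operations: Lichtman's free-product theorem plus a pro-$p$ completion argument for free products (Proposition \ref{lem:freeprodL}), and, for your ``delicate twisting step'', Lemma \ref{lem:gamma fprod}, which shows $G_{(n)}=Z^{p^{\lceil\log_p(n)\rceil}}\rtimes(G_0)_{(n)}$ and hence $L(G)=L(Z)\oplus L(G_0)$ and $\sU(L(G))\cong\F_p[X]\otimes^1\gr\F_p\dbl G_0\dbr$ (Proposition \ref{prop:fprod LandU}). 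Where you differ is only in the glue: you factor the final identification through the canonical surjection $H^\bullet(G,\F_p)^!\twoheadrightarrow\gr\FppG$ (the paper's Theorem F, Proposition \ref{prop:quadcover}, which rests on the pairing machinery of Section \ref{sec:pairings and duals}) together with quadraticity of $\sU(L(G))$, whereas the paper never needs that surjection for Theorem B: it carries the full quadratic-duality statement through the induction using the De Morgan dualities of Remark \ref{rem:de Morgan} (e.g.\ $(A_\bullet\sqcap B_\bullet)^!=A_\bullet^!\sqcup B_\bullet^!$ and $(A_\bullet\otimes^{-1}B_\bullet)^!=A_\bullet^!\otimes^1 B_\bullet^!$), so quadraticity of $\gr\FppG$ is never an isolated step --- each stage exhibits it directly as a quadratic algebra. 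Your opening reduction, that PBW passes to quadratic duals (Remark \ref{rem:PBW}), is legitimate and is indeed how the paper handles the Demushkin case; for the two products the paper instead invokes closure of PBW under $\sqcap$, $\sqcup$, $\otimes^{\pm1}$ (Example \ref{ex:PBW}).

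Two caveats. First, your route (a) is circular as stated: the Euler identity $h_{\sU(L(G))}(t)\cdot h_{A_\bullet}(-t)=1$ is essentially equivalent to the assertion being proved, and the Jennings--Quillen formula gives nothing without independent knowledge of $\dim_{\F_p}L_n(G)$; moreover ET groups of cohomological dimension $>2$ (e.g.\ iterated cyclotomic semidirect products, as in Example \ref{ex:rigid fields}) are not mild, so the Hilbert-series argument of Theorem \ref{thm:mildkoszul} is unavailable in general --- you rightly flag this and fall back on (b). Second, for an abstract elementary type group you cannot cite Bloch--Kato for quadraticity of $H^\bullet(G,\F_p)$: ET groups are not known to be realizable (that is precisely the content of the ETC). Quadraticity instead comes from the induction itself (Propositions \ref{propo:freeprod cohomology} and \ref{prop:fibreprod cohomology}), or, as you in effect also use, from Theorem A, since PBW algebras are quadratic by definition.
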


By a result of Jennings (Theorem \ref{thm:LG grFpG}), for a finitely generated pro-$p$ group $G$,
$\sU(L(G))$ is isomorphic to the graded object $\gr\FppG$ associated to the filtration
on the complete group algebra of $G$ by the successive powers of the complete augmentation ideal
(see Definition~\ref{defi:FpG}).
Thus, Theorem~B gives a partial positive answer to a question of T.~Weigel (cf.~\cite{thomas:proc}):

\begin{ques}[Weigel]
Let $\K$ be a field containing a primitive $p$\textsuperscript{th} root of unity.
Is the algebra $\sU(L(G_\K(p)))$ Koszul?
\end{ques}


Theorem B can be proved without Hypothesis \ref{ass2} for the class of Demushkin groups (see \S~\ref{sec:ET}). In particular, this applies to maximal pro-$p$ quotients of absolute Galois groups of local fields that contain a primitive $p$\textsuperscript{th} root of unity. In combination with Theorem A, this gives:
\begin{thmC}
If $G$ is a Demushkin pro-$p$ group, then the algebras $H^\bullet(G,\F_p)$ and $\sU(L(G))$ are quadratic dual to each other and PBW, hence Koszul.
\end{thmC}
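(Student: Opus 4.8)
The plan is to prove Theorem C directly from the well-understood presentation of Demushkin groups, treating them as the natural base case for the inductive machinery that handles elementary type groups. A Demushkin pro-$p$ group $G$ is a Poincar\'e duality group of dimension $2$; concretely, it is one-relator, so $\dim_{\F_p} H^1(G,\F_p) = d$ for some $d$, $\dim_{\F_p} H^2(G,\F_p)=1$, and $H^n(G,\F_p)=0$ for $n>2$. The cup product $H^1\times H^1\to H^2$ is a nondegenerate (anti)symmetric bilinear form. First I would fix a minimal generating set $x_1,\dots,x_d$ of $G$ together with the corresponding dual basis $\chi_1,\dots,\chi_d$ of $H^1(G,\F_p)$, and record the single defining relation of $G$; this relation, read through the standard identification of $H^2$ with relations modulo $p$-th powers and commutators, encodes the quadratic form giving the cup product. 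The goal is to exhibit a basis in which that form is diagonal or hyperbolic, so that the quadratic algebra $H^\bullet(G,\F_p)$ has an explicit monomial presentation.

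The core of the argument is to choose an ordering on the generators for which the defining quadratic relations of $H^\bullet(G,\F_p)$ form a Gr\"obner basis whose leading monomials generate the ideal; this is exactly what it means for the algebra to be PBW in the sense of \cite[Chapter 4]{lodval}. Concretely, after normalizing the cup-product form one finds $d/2$ hyperbolic pairs (in the generic case, with at most one extra diagonal generator depending on the parity and the Demushkin invariants $q$ and the torsion), and the relation ideal is generated by the commuting relations $\chi_i\chi_j = -\chi_j\chi_i$ for non-paired indices together with the single relation expressing $H^2$ as a one-dimensional space. I would verify that under a suitable monomial order the leading terms of these relators are pairwise coprime enough to guarantee no nontrivial overlaps (no ambiguities in the sense of the Diamond Lemma), so that the PBW/Koszul criterion applies with no further obstructions.

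Having established that $H^\bullet(G,\F_p)$ is PBW, its Koszulity is immediate. For the dual side I would invoke the relationship between $\gr\FppG$ and the restricted enveloping algebra $\sU(L(G))$ recorded in Theorem~\ref{thm:LG grFpG} (Jennings), so that it suffices to show $\sU(L(G))$ is the quadratic dual $H^\bullet(G,\F_p)^!$. Since the quadratic dual of a PBW algebra is again PBW, and PBW algebras are Koszul, once the identification $\gr\FppG \cong H^\bullet(G,\F_p)^!$ is in place all the remaining claims follow formally. The identification itself comes from comparing the degree-one and degree-two pieces: $(\gr\FppG)_1$ is dual to $H^1(G,\F_p)$, and the degree-two relations of $\gr\FppG$ are governed by the same one defining relation of $G$, which is orthogonal-complementary to the cup-product relations defining $H^\bullet(G,\F_p)$.

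The main obstacle I anticipate is the case analysis forced by the Demushkin classification, particularly the subtle behavior at $p=2$ when $\K$ need not contain $\sqrt{-1}$. There the cup-product form need not be a clean sum of hyperbolic planes; Demushkin groups at $p=2$ split into several families (for instance those with the relation $x_1^2[x_2,x_3]\cdots$ versus $x_1^{2+2^f}[x_1,x_2]\cdots$), and one must check the Gr\"obner/PBW condition separately in each. Establishing that a compatible monomial order exists uniformly, so that the leading-term ideal is the same in every family and no spurious overlap ambiguities arise, is where the real work lies; the odd-$p$ and $p=2$-with-$\sqrt{-1}$ cases should be routine once the normal form of the relation is in hand.
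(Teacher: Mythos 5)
Your proposal has two genuine gaps, one on each side of the duality. The more serious one is the identification $\gr\FppG\cong H^\bullet(G,\F_p)^!$, which you propose to obtain ``by comparing the degree-one and degree-two pieces.'' Agreement in degrees $\leq 2$ is not enough: it only yields an epimorphism $H^\bullet(G,\F_p)^!\twoheadrightarrow\gr\FppG$ (this is precisely the content of Proposition~\ref{prop:quadcover}, i.e.\ Theorem F), because a priori $\gr\FppG$ could satisfy additional relations in degree $\geq 3$ that are not generated by the degree-two initial form of the single defining relator $r$. What rules these out in the paper is \emph{mildness}: Demushkin groups are mild with respect to the $p$-Zassenhaus filtration by \cite[Theorem~6.3]{jochen:massey}, and for a mild presentation one has $\gr\FppG\cong\F_p\langle X_1,\dots,X_d\rangle/(\overline{r})$ by \cite[Theorem~2.12]{jochen:massey}. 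Without mildness (or an equivalent strongly-free/Hilbert-series argument) your identification is unjustified, and this is exactly the point where ``predictable'' versus ``unpredictable'' relations matter. A second, smaller omission: you assert that a Demushkin group is a Poincar\'e duality group of dimension $2$ with $H^n=0$ for $n>2$, but this holds only for infinite Demushkin groups; the finite Demushkin group $\Z/2\Z$ has $H^\bullet(\Z/2\Z,\F_2)\cong\F_2[X_1]$, not concentrated in degrees $\leq 2$, and requires the separate (easy) base case the paper treats first, with $\gr\F_2\dbl G\dbr$ the trivial algebra on one generator.

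The PBW verification as you describe it would also fail. You plan to check confluence directly on $H^\bullet(G,\F_p)$ and claim the leading terms are ``pairwise coprime enough to guarantee no nontrivial overlaps.'' But the relator space $\Omega=\ker\bigl(\cup\colon V\otimes V\to H^2\bigr)$ has codimension $1$ in $V\otimes V$, so a normalized basis of $\Omega$ consists of $d^2-1$ relators whose leading monomials exhaust all degree-two monomials except one; consequently nearly \emph{every} degree-three monomial is critical, and confluence would have to be verified overlap by overlap in each of the four families of the classification. The paper avoids this entirely by running the PBW check on the quadratic-dual side, where there is a \emph{single} relator: each initial form in \eqref{eq:demushkin initial form relator} (e.g.\ $[X_1,X_2]+[X_3,X_4]+\cdots$, or $X_1^2+[X_2,X_3]+\cdots$) contains a monomial $X_iX_j$ with $i\neq j$ while $X_i^2$ does not occur, so Lemma~\ref{lem:one-relator PBW} applies: under a suitable degree-lexicographic order the leading monomial is $X_iX_j$ and there are \emph{no} critical monomials at all. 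PBW for $H^\bullet(G,\F_p)$ then follows formally by Remark~\ref{rem:PBW}, since the quadratic dual of a PBW algebra is PBW under the opposite order. Your direct approach could in principle be completed, but the confluence check you wave away is the actual work, and the one-relator dual-side argument is the idea that makes the proof short.
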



There are numerous families of fields for which a suitable version of the Elementary Type Conjecture is known to hold. For each of these families, the results of this paper guarantee the Koszulity of Galois cohomology unconditionally.
In many cases, also a description of $\sU(L(G_F))$ is available and the duality of the two algebras can be proved.
The notion of Pythagorean field will be recalled in \S~\ref{ssec:pita}, the notion of $p$-rigid field in \S~\ref{ssec:prigid}. Pseudo algebraically closed fields (PAC fields) are studied in \cite[Chapter 11]{friedjarden}.

\begin{thmD}
Let $F$ be a field such that the quotient $F^\times/(F^\times)^p$ is finite.
The algebra $H^\bullet(G_F,\F_p)$ is PBW, and hence Koszul,
in the following cases:
\begin{itemize}
 \item[(a)] $F$ is finite;
 \item[(b)] $F$ is a pseudo algebraically closed (PAC) field, or an extension of relative trascendence degree 1 of a PAC field;
 \item[(c)] $F$ is an algebraic extension of $\Q$;
 \item[(d)] $F$ is an extension of trascendence degree 1 of a local field;
 \item[(e)] $F$ is $p$-rigid.
\end{itemize}
Moreover, if $p=2$ the above also holds in the cases:
\begin{itemize}
\item[(f)] $F$ is algebraic extension of a global field of characteristic not 2;
\item[(g)] $F$ is a Pythagorean field.
\end{itemize}
In cases (a) to (f), $\sU(L(G_F))$ is quadratic dual to $H^\bullet(G_F,\F_p)$ and hence it is Koszul as well.
\end{thmD}

As Positselski proved in \cite{positselsky:koszul2}, the cohomology of local and global fields also has PBW property. These facts corroborate a strenghtening of Positselski's Conjecture.
\begin{conj}
 Let $\K$ be a field containing a primitive $p$\textsuperscript{th} root of unity.
Then the algebra $H^\bullet(G_{\K},\F_p)$ is PBW.
\end{conj}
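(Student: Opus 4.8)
The plan is to derive the conjecture from Theorem A, at the cost of making the argument conditional on the Elementary Type Conjecture, with the finitely generated case as its heart. The first step is to reduce from $G_\K$ to its maximal pro-$p$ quotient. By the remark following Theorem A, Hypothesis \ref{ass} together with Bloch--Kato gives $H^n(\Gal(\K_{\sep}/\K(p)),\F_p)=0$ for $n>0$; the Hochschild--Serre spectral sequence of $1\to\ker(\pi)\to G_\K\to G_\K(p)\to1$ then collapses and the inflation map $\mathrm{inf}\colon H^\bullet(G_\K(p),\F_p)\to H^\bullet(G_\K,\F_p)$ is an isomorphism of graded $\F_p$-algebras. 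So it suffices to prove that $H^\bullet(G_\K(p),\F_p)$ is PBW.

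Suppose first that $\dim_{\F_p}H^1(G_\K,\F_p)<\infty$, equivalently that $G_\K(p)$ is finitely generated. Granting ETC, $G_\K(p)$ is then an elementary type pro-$p$ group, and Theorem A yields at once that $H^\bullet(G_\K(p),\F_p)$, hence $H^\bullet(G_\K,\F_p)$, is PBW. This is the whole content of the finitely generated case, and ETC is precisely the deep, still-open input it requires.

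For arbitrary $\K$ one would pass to a limit. Since $\F_p$-cohomology commutes with directed unions of fields, writing $\K=\bigcup_i\K_i$ over its finitely generated subfields identifies $H^\bullet(G_\K,\F_p)$ with the filtered colimit $\varinjlim_i H^\bullet(G_{\K_i},\F_p)$ along the restriction maps. Two difficulties arise. First, a finitely generated field $\K_i$ need not have $\K_i^\times/(\K_i^\times)^p$ finite, so $G_{\K_i}(p)$ need not be finitely generated and ETC together with Theorem A need not apply verbatim at each stage; one would have to develop a version of these results for suitably infinitely generated analogues of elementary type pro-$p$ groups, or refine the colimit so that its stages are genuinely of elementary type. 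Second, one must check that PBW is inherited by such filtered colimits: Koszulity is detectable degree-by-degree from finitely generated data, but upgrading this to PBW requires choosing the ordered bases of the spaces $H^1(G_{\K_i},\F_p)$ compatibly across the restriction maps, which is delicate precisely because the Kummer transition maps $\K_i^\times/(\K_i^\times)^p\to\K_j^\times/(\K_j^\times)^p$ need not be injective.

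The main obstacle is ETC itself: it is the absence of an unconditional identification of finitely generated maximal pro-$p$ Galois groups with elementary type groups that confines the paper's unconditional results to the Demushkin case (Theorem C) and to Pythagorean formally real fields (Theorem D). Any approach avoiding ETC would instead have to build the PBW structure directly from the Bloch--Kato presentation of $H^\bullet(G_\K,\F_p)$ by degree-one Kummer generators and quadratic relations --- in effect sharpening Positselski's program on the Koszulity of mod-$p$ Milnor K-theory to the PBW level --- which at present seems out of reach.
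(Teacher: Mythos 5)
The statement you were asked to prove is presented in the paper as a \emph{conjecture} (a strengthening of Positselski's conjecture), and the paper offers no proof of it: its role is to be corroborated by the paper's theorems, not established by them. So there is no proof in the paper against which your argument could be matched, and your proposal --- as you yourself make clear --- is not a proof either. What you have done is reconstruct exactly the paper's supporting evidence: the reduction from $G_\K$ to $G_\K(p)$ via the collapsing Hochschild--Serre spectral sequence is verbatim the remark following Theorem A, and the finitely generated case conditional on ETC is precisely the intended reading of Theorem A together with the Elementary Type Conjecture. Your identification of the two genuine obstructions is also accurate: ETC is open, and the paper's inductive machinery (free products, cyclotomic semidirect products, the rewriting method and confluence of critical monomials) is developed only for finitely generated groups, so it says nothing about fields $\K$ with $\K^\times/(\K^\times)^p$ infinite. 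The paper's only unconditional instances are Theorems C and D and Positselski's results for local and global fields, exactly as you say.

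Two caveats on your limit discussion, both confirming rather than repairing the gap. First, the identification $H^\bullet(G_\K,\F_p)\cong\varinjlim_i H^\bullet(G_{\K_i},\F_p)$ over finitely generated subfields is standard, but, as you note, the stages $G_{\K_i}(p)$ need not be finitely generated pro-$p$ groups (already $\Q^\times/(\Q^\times)^p$ is infinite), so ETC as formulated in the paper does not apply at any stage; the passage to the finitely generated case is therefore not a technical reduction but would require an extension of the Elementary Type formalism that the paper does not contain. Second, your worry about PBW and colimits is well placed for a structural reason: the paper defines the PBW property only for quadratic algebras with finite-dimensional degree-one part, via a totally ordered finite basis and an admissible monomial order, so even \emph{stating} PBW for $H^\bullet(G_\K,\F_p)$ with $\K$ arbitrary presupposes an extension of the definition, quite apart from the compatibility of orderings along the Kummer transition maps. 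In short: your proposal is a correct and honest account of what the paper actually establishes toward this conjecture and of why it remains open, but it is a conditional reduction plus a program, not a proof, and no proof exists in the paper to compare it with.
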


Koszulity of an algebra can be paraphrased as the fact that there are not surprising ways in which an element
of the algebra can turn to be $0$, or in other words ``only what is expected to be $0$ is actually $0$''.
Our next results go in the same philosophical direction.
When a minimal presentation $G=\gen{x_1,\dots,x_d\mid r_1,\dots,r_m}$ of a finitely generated pro-$p$ group is given
(see \S~\ref{sec:pairings and duals}), commutators and powers applied to a given relator $r_i$ provide automatically
some higher degree relators of $\gr\FppG$. Let us say that these are the ``predictable'' higher degree relations. 
But a priori, $\gr\FppG$ might be subject to other, ``unpredictable'' relations. Under the mild assumption
that Galois cohomology is quadratic, the predictable relations are orthogonal to the relations in Galois cohomology:

\begin{thmE}
Let $G=\gen{x_1,\dots,x_d\mid r_1,\dots,r_m}$ be a minimal presentation of a finitely generated pro-$p$-group $G$,
and assume that $H^\bullet(G,\F_p)$ is quadratic.
Then there is an isomorphism of quadratic $\F_p$-algebras
\[
 H^\bullet(G,\F_p)^!\cong \frac{\F_p\langle X\rangle}{\mathcal{R}},
\]
where $\mathcal{R}$ is the two-sided ideal generated by the \emph{initial forms}
(see the end of \S~\ref{ssec:restricted}) of the defining relations $r_1,\dots,r_m$.
\end{thmE}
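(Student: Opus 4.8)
The plan is to recognise that both sides are quadratic algebras and to match their spaces of generators and relations. Since $H^\bullet(G,\F_p)$ is assumed quadratic, I write $V=H^1(G,\F_p)$ and let $R\subseteq V\otimes V$ be the kernel of the cup product $\cup\colon V\otimes V\to H^2(G,\F_p)$, so that $H^\bullet(G,\F_p)=T(V)/(R)$ and, by Definition~\ref{defi:quadratic algebras}, $H^\bullet(G,\F_p)^!=T(V^*)/(R^\perp)$ with $R^\perp\subseteq V^*\otimes V^*$ the orthogonal complement of $R$. Dualising the minimal presentation identifies $V^*=H_1(G,\F_p)$ with the degree-one part $\gen{X_1,\dots,X_d}$ of $\F_p\langle X\rangle$, the $X_i$ being dual to the images of the $x_i$ in the Frattini quotient $G/\Phi(G)$; hence $T(V^*)=\F_p\langle X\rangle$. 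With these identifications the theorem reduces to the equality of two-sided ideals $(R^\perp)=\mathcal R$, and because $R^\perp$ lives in degree two it suffices to show that $R^\perp$ is spanned by the degree-two components of the initial forms of $r_1,\dots,r_m$.

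The central step is the pairing between cohomology and relations. Writing $G=F/N$, with $F$ free pro-$p$ on $x_1,\dots,x_d$ and $N$ normally generated by $r_1,\dots,r_m$, the transgression yields $H^2(G,\F_p)\cong(N/N^p[N,F])^*$, so that $H^2(G,\F_p)^*$ is identified with the $\F_p$-span of the classes $\rho_1,\dots,\rho_m$ of the relators; this is the duality recorded in Section~\ref{sec:pairings and duals}. The key computation is that the transpose cup product $\cup^*\colon H^2(G,\F_p)^*\to V^*\otimes V^*$ sends $\rho_i$ to the degree-two component of the initial form of $r_i$ in $\F_p\langle X\rangle=\gr\F_p\dbl F\dbr$; concretely, for $\chi_j,\chi_k\in H^1$ dual to $x_j,x_k$, the scalar $\langle\rho_i,\chi_j\cup\chi_k\rangle$ equals the coefficient of $X_jX_k$ in the Magnus expansion of $r_i$. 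I would establish this by comparing the bar resolution computing the cup product with the Magnus embedding, exactly as developed in Section~\ref{sec:pairings and duals}.

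Finally I would use quadraticity to control the initial forms and conclude. Being quadratic, $H^\bullet(G,\F_p)$ is generated in degree one, so $\cup$ is surjective and $\cup^*$ is injective; hence no $\rho_i$ lies in $\ker\cup^*$, which by the previous step forces the degree-two component of each initial form to be nonzero. Since any element of $\Phi(F)$ has initial form of degree at least two, each $r_i$ then has its initial form in degree exactly two and equal to $\cup^*(\rho_i)$. By the elementary identity $\im\cup^*=(\ker\cup)^\perp=R^\perp$ together with $\im\cup^*=\Span_{\F_p}\{\cup^*(\rho_1),\dots,\cup^*(\rho_m)\}$, the space $R^\perp$ is precisely the span of the initial forms of the relators. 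Therefore $(R^\perp)=\mathcal R$ and $H^\bullet(G,\F_p)^!=T(V^*)/(R^\perp)\cong\F_p\langle X\rangle/\mathcal R$, as claimed.

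The main obstacle is the pairing computation of the second paragraph: identifying the transpose of the cup product with the degree-two coefficients of the Magnus expansion of the relators, and doing so compatibly with the transgression isomorphism. Once that identification and the surjectivity of the cup product are in hand, the linear-algebra bookkeeping and the degree argument ruling out higher initial forms are routine.
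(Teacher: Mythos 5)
Your proposal is correct and takes essentially the same route as the paper: the heart of both arguments is the transgression--Magnus computation (the paper's Proposition~\ref{prop:commutative diagram}, resting on \cite[Proposition~1.3.2]{vogel:thesis}), which identifies $\cup^*(\rho_i)$ with the degree-two part of the Magnus expansion of $r_i$ (up to the paper's Koszul sign convention, which does not affect the span and hence the conclusion), after which surjectivity of the cup product forces every initial form into degree exactly two. Your finishing identity $\im \cup^* = (\ker \cup)^\perp$ is simply the dual formulation of the paper's perfect-pairing-plus-dimension-count argument in Theorems~\ref{thm:perfectpairing} and~\ref{thm:quaddual}, so the two proofs coincide in substance.
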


Under the same hypothesis, unpredictable relations cannot exist in lower degrees and do not exist at all
for the class of quadratically defined pro-$p$ groups introduced in \cite{KLM}
(see Definition~\ref{def:quadratic presentation}).

\begin{thmF}
Let $G$ be a finitely generated pro-$p$ group with $H^\bullet(G,\F_p)$ quadratic. 
Then there is an epimorphism of graded $\F_p$-algebras
\[
\xymatrix{H^\bullet(G,\F_p)^!\ar@{->>}[r] & \gr\FppG}
\]
which is an isomorphism in degrees $0$, $1$, $2$.
Further, if $G$ has a quadratically defined presentation, then the previous map is an isomorphism in all degrees.
\end{thmF}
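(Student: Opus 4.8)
The plan is to present both algebras as quotients of the free associative algebra $\FpX$ on $X=\{X_1,\dots,X_d\}$ and to compare their defining ideals degree by degree. Write $G=F/N$, with $F$ the free pro-$p$ group on $x_1,\dots,x_d$ and $N$ the closed normal subgroup generated by $r_1,\dots,r_m$; let $I_F,I_G$ be the augmentation ideals of $\F_p\dbl F\dbr$ and $\FppG$. The canonical surjection $\F_p\dbl F\dbr\twoheadrightarrow\FppG$ is filtered for the augmentation filtrations, with $I_G^n$ the image of $I_F^n$, and its kernel $K$ is the closed two-sided ideal generated by the $r_k-1$. Taking associated graded algebras and using $\gr\F_p\dbl F\dbr\cong\FpX$, one gets $\gr\FppG\cong\FpX/J$, where $J$ is the leading-term ideal $\bigoplus_n(K\cap I_F^n+I_F^{n+1})/I_F^{n+1}$. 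By Theorem E, $H^\bullet(G,\F_p)^!\cong\FpX/\mathcal{R}$ with $\mathcal{R}$ generated by the initial forms $\mathrm{in}(r_k)$. As each $\mathrm{in}(r_k)$ is a leading form of the element $r_k-1\in K$, we have $\mathcal{R}\subseteq J$; the induced epimorphism $\FpX/\mathcal{R}\twoheadrightarrow\FpX/J$ is the map of the statement.

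I would then show it is an isomorphism through degree $2$ by comparing $\mathcal{R}$ and $J$ in each degree inside the common ambient $\FpX$. Minimality of the presentation gives $r_k\in\Phi(F)$, hence $r_k-1\in I_F^2$, so $K\subseteq I_F^2$; therefore $\mathcal{R}_0=J_0=0$ and $\mathcal{R}_1=J_1=0$, and both algebras agree with $\F_p$ and $\F_p^{\,d}$ in degrees $0,1$. In degree $2$ the decisive point is a truncation argument: given $f\in K$ with leading form in degree $2$, write $f=\sum_k\alpha_k(r_k-1)\beta_k$ and split $\alpha_k,\beta_k$ into their augmentation value and a part in $I_F$; since $r_k-1\in I_F^2$, every summand involving an $I_F$-part lies in $I_F^3$, so modulo $I_F^3$ only the scalar multiples of those $r_k-1$ with $\mathrm{in}(r_k)$ of degree exactly $2$ survive. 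Hence $J_2$ is spanned by the degree-$2$ initial forms, which is exactly $\mathcal{R}_2$; so $\mathcal{R}_2=J_2$ and the epimorphism is an isomorphism in degrees $0,1,2$.

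For the last assertion I would use that $H^\bullet(G,\F_p)^!$, being the quadratic dual of a quadratic algebra, is quadratic, so $\mathcal{R}$ is generated by $\mathcal{R}_2$. If $G$ is quadratically defined in the sense of Definition~\ref{def:quadratic presentation}, then the defining ideal $J$ of $\gr\FppG$ is generated in degree $2$; equivalently $\gr\FppG$ is quadratic. Together with the identity $\mathcal{R}_2=J_2$ this gives $\mathcal{R}=\langle\mathcal{R}_2\rangle=\langle J_2\rangle=J$, whence the epimorphism is an isomorphism in all degrees.

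The main obstacle I anticipate is the bookkeeping in the completed setting: one must justify that the associated graded of $\FppG=\F_p\dbl F\dbr/K$ is genuinely $\FpX/J$ with $J$ the \emph{full} leading-term ideal, and that the degree-$2$ truncation argument is valid for the convergent sums occurring in the complete group algebra. Conceptually, the gap between the ideal $\mathcal{R}$ of initial forms and the leading-term ideal $J$ is exactly where unpredictable relations can arise in degrees $\geq 3$, so the force of the quadratically defined hypothesis is precisely to collapse this gap by making $J$ quadratic.
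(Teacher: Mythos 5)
Your proposal is correct, and it reaches the conclusion by a genuinely different mechanism than the paper, though with the same skeleton (both sides hinge on Theorem E, i.e.\ Theorem~\ref{thm:quaddual}, plus an inclusion of ideals in $\FpX$ that is an equality in degrees $\leq 2$). The paper stays inside the restricted Lie algebra framework: from the minimal presentation it forms the exact sequence $0\to I(R)_{\ind}\to L(S)\to L(G)\to 0$, applies Jennings's Theorem~\ref{thm:LG grFpG} to identify $\gr\FppG$ with $\sU(L(G))$, and then uses Proposition~\ref{prop:presentation U} to present $\sU(L(G))$ as $\FpX/\mathcal{R}'$ with $\mathcal{R}'$ generated by $I(R)_{\ind}$; the inclusion $\mathcal{R}\subseteq\mathcal{R}'$ is Lemma~\ref{prop:compare ideals}, and the degree-$2$ equality is read off from the identification of the degree-$2$ components of both $\mathfrak{r}$ and $I(R)_{\ind}$ with $RS_{(3)}/S_{(3)}$, which rests on Corollary~\ref{coro:RcapF3}. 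You instead argue purely ring-theoretically: $\gr\FppG\cong\FpX/J$ with $J$ the leading-form ideal of the kernel $K$ of $\F_p\dbl F\dbr\twoheadrightarrow\FppG$, the inclusion $\mathcal{R}\subseteq J$ coming from Jennings's correspondence between initial forms of group elements and leading forms of $r_k-1$, and the equality $J_2=\mathcal{R}_2$ from your augmentation-splitting truncation modulo $I_F^3$. Your route buys elementarity: it avoids enveloping algebras of restricted Lie algebras altogether, and your degree-$2$ computation is self-contained (it does not even invoke Corollary~\ref{coro:RcapF3} directly, only through its role inside Theorem E). The paper's route buys structure: it realizes your $J$ as the ideal generated by the restricted ideal $I(R)_{\ind}$, which is exactly what feeds the surrounding discussion of $\mathfrak{r}$ versus $I(R)_{\ind}$ and the refined filtration $\{R_{(n,S)}\}$. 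Three points you flag or gloss are indeed the places needing care, and all are standard: (a) that $K$ is the closed two-sided ideal generated by the $r_k-1$ and that the quotient filtration $(I_F^n+K)/K$ agrees with the augmentation filtration of $\FppG$ (cf.~\cite[Ch.~7]{koch}); (b) your truncation is legitimate for convergent sums because $I_F^3$ is open, so any $f\in K$ agrees modulo $I_F^3$ with a finite algebraic combination $\sum_i\alpha_i(r_{k_i}-1)\beta_i$, to which the splitting applies; (c) in the last step, Definition~\ref{def:quadratic presentation} furnishes an isomorphism $\gr\FppG\cong\FpX/(\rho_1,\dots,\rho_m)$, and to conclude that $J$ itself is generated in degree $2$ you should compare dimensions degreewise using $\mathcal{R}\subseteq J$ --- the same gloss the paper makes when it asserts $\mathcal{R}=\mathcal{R}'$ ``by definition''.
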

Thus, the following question arises naturally.
\begin{ques}
Does every maximal pro-$p$ quotient of an absolute Galois group admit a quadratically defined presentation?
\end{ques}

The structure of the paper is as follows.
In \S~\ref{sec:quadratic} we provide a survey of quadratic algebras, focusing on Koszul and PBW properties.
\S~\ref{sec:zass} is devoted to some preliminaries on filtrations and algebras associated to groups in general
and pro-$p$ groups in particular.
In \S~\ref{sec:ET} we recall the definition of the class of elementary type pro-$p$ groups.
Theorems A, B and C are proved in \S~\ref{sec:A,B,C}, Theorem D in \S~\ref{sec:D}.
\S~\ref{sec:pairings and duals} contains the relevant facts about pairings, quadratic duals and
filtrations in Galois cohomology that lead to the proof of Theorems E and F. 

\subsection*{Acknowledgment}
{\small Our thinking about Koszul property of Galois cohomology was deeply influenced by J.P.~Labute's work on Demushkin groups, mild groups and filtrations in group theory. It has been a great privilege of the first author to be able to personally collaborate with him and to discuss a number of related topics. We thank Labute also for his encouragement and enthusiasm related to our work. We learned and are still learning about quadratic algebras from inspiring papers by A.~Polishchuk and L.~Positselski, as well as their extremely useful and inspiring book \cite{PoliPosi}. We are grateful to Th.S.~Weigel for his cheerful friendship and his guidance throughout the years, especially while acting as the PhD supervisor of the second author and as the co-supervisor (along with the first author) of the third author. We thank D.M.~Riley for many exciting discussions and valuable suggestions. It is also a pleasure to thank our friends, including S.K.~Chebolu, M.~\v{C}izek, I.~Efrat, S.~Gille, C.~Hall, R.~Libman, E.~Matzri, D.~Neftin, M.~Palaisti, O.~Puglisi, M.~Rogelstad, A.~Schultz, and I.~Snopce, who shared the time and the excitement of this research with the authors, offering insightful advice and words of encouragement.

Last but not least we thank warmly the referee for reading our paper carefully and for providing a number of helpful suggestions which improved and clarified our exposition.}

\section{Quadratic algebras and the Koszul property}\label{sec:quadratic}

This section provides the basic definitions, notable facts and cohomological background for the classes of Koszul algebras and PBW algebras.
In this section $\Bbbk$ denotes an arbitrary field. 

\begin{defn}\label{defi:algebras}
An associative, unital $\Bbbk$-algebra $A$ is \emph{graded} if it decomposes as the direct sum of $\Bbbk$-vector
spaces $A_\bullet=\bigoplus_{i\in\Z}A_i$ such that, for all $i,j\in\Z$, $A_iA_j\subseteq A_{i+j}$.
The summand $A_i$ is the \emph{homogeneous component} of degree $i$.
A graded $\Bbbk$-algebra $A_\bullet$ is \emph{connected} if $A_i=0$ for all $i<0$ and $A_0=\Bbbk\cdot 1_A\cong \Bbbk$;
\emph{generated in degree $1$} if all its (algebra) generators are in $A_1$;
\emph{locally finite-dimensional} if $\dim_{\Bbbk} A_i<\infty$ for all $i\in\Z$ (another common terminology for this is
\emph{finite-type}).
A module $M$ over a graded algebra $A_\bullet$ is \emph{graded} if it decomposes as the direct sum
of $\Bbbk$-vector spaces $M_\bullet=\bigoplus_{i\in\Z}M_i$ s.t., for all $i,j\in\Z$, $A_iM_j\subseteq M_{i+j}$.
A graded $A_\bullet$-module $M_\bullet$ is \emph{finite-type} if $\dim_{\Bbbk} M_i<\infty$ for all $i\in\Z$.
\end{defn}

For a vector space $V$ over $\Bbbk$, let $T_\bullet(V)$ denote the \emph{tensor} $\Bbbk$-algebra generated by $V$, i.e.,
\[T_\bullet(V)=\bigoplus_{n\geq0}V^{\otimes n},\qquad \text{with }V^{\otimes0}=\Bbbk.\]
It is a graded algebra with the tensor product as algebra multiplication. We will systematically omit tensor signs, and simply use juxtaposition, in writing the elements of a tensor algebra. In particular, if $V$ is finite-dimensional, with a basis $\{x_1,\dots,x_n\}$, then $T_\bullet(V)$ can (and will) be identified with the algebra $\Bbbk\gen{x_1,\dots,x_n}$ of noncommutative polynomials in the variables $x_1,\dots,x_n$, graded by polynomial degree.

Ignoring the grading, tensor algebras are the free objects in the category of (associative, unital) algebras (cf.~\cite[\S 1.1.3]{lodval}), so any algebra is a quotient of a tensor algebra. When an algebra is presented as a quotient of a tensor algebra, we will simply identify an element of the former with each of its representatives in the latter.

\begin{ass}\label{ass:algebra hypotheses}
Henceforth we assume graded algebras to be locally finite-dimensional, connected and generated in degree $1$,
unless otherwise specified. In particular, every such algebra $A_\bullet$ is equipped with
the \emph{augmentation map} $\varepsilon\colon A_\bullet\to\Bbbk$ that is the projection onto $A_0$.
Its kernel is the \emph{augmentation ideal} $A_+=\bigoplus_{n\geq1}A_n$.
The augmentation map gives $\Bbbk$ a canonical structure of graded $A_\bullet$-module concentrated in degree $0$,
via the action $a\cdot k=\varepsilon(a)k$.

In the following, ground fields of algebras will be always understood to be equipped with this module structure.
\end{ass}

\subsection{Quadratic algebras}\label{ssec:quadratic alg}
\label{ssec:quadratic algebras}
For a finite-dimensional $\Bbbk$-vector space $V$, let $V^*=\Hom_{\Bbbk}(V,\Bbbk)$ be the $\Bbbk$-dual space of $V$.
With a little abuse, we will then systematically identify $(V\otimes V)^*=V^*\otimes V^*$.

\begin{defn}\label{defi:quadratic algebras}
A \emph{quadratic algebra} is a quotient
\[
 A_\bullet=\frac{T_\bullet(V)}{(\Omega)},
\]
of the tensor algebra over some $\Bbbk$-vector space $V$, with $(\Omega)$ the two-sided ideal generated by a vector subspace $\Omega\subseteq V\otimes V$. 
Throughout this paper, we further require $V$ to be finite-dimensional, in accordance with the Standing Hypothesis \ref{ass:algebra hypotheses}. In other words, a quadratic algebra is a connected graded algebra that admits a presentation with finitely many degree-$1$ generators
and homogeneous degree-$2$ relations.
The component $V=A_1$ is the \emph{space of generators} of $A_\bullet$, $\Omega$ is the \emph{space of relators}, and we will use the
notation $A=Q(V,\Omega)$.

The \emph{quadratic dual} $A_\bullet^!$ of a quadratic $\Bbbk$-algebra $A_\bullet$
is the quadratic $\Bbbk$-algebra $A_\bullet^!=T_\bullet(V^*)/(\Omega^\perp)$, with 
\[\Omega^\perp=\{f\in (V\otimes V)^*\:|\:f(\omega)=0\text{ for all }\omega\in \Omega\}\subseteq V^*\otimes V^*.\]
\end{defn}
\begin{rem}
For quadratic algebras, the natural grading on $T_\bullet(V)$ passes to the quotient thanks to the homogeneity
of the relations. This is the way quadratic algebras are understood to be graded.
\end{rem}
Thanks to finite-dimensionality assumptions, one has $(A_\bullet^!)^!=A_\bullet$.

\begin{ex}\label{ex:quadratic}
Let $V$ be a finite-dimensional $\Bbbk$-vector space, with a basis $B=\{x_1,\dots,x_d\}$. Let $B^*=\{x_1^*,\ldots,x^*_d\}$ be the dual basis to $B$.
\begin{itemize}
\item[(a)] The tensor algebra $T_\bullet (V)$ is quadratic, with trivial space of relators.
Its quadratic dual is the \emph{trivial} algebra on $V^*$, that is $\Bbbk\oplus V^*$ with trivial multiplication,
whose space of relators is $V^*\otimes V^*$.
\item[(b)] The symmetric algebra 
\[ \mathcal{S}_\bullet(V)=\frac{T_\bullet (V)}{( x_i x_j-x_j x_i\mid x_i,x_j\in B)}\]
and the exterior algebra
\[ {\Lambda}_{\bullet}(V^*)=\frac{T_\bullet(V^*)}{( x^*_i x^*_i, x^*_i x^*_j+x^*_j x^*_i\mid x^*_i,x^*_j\in B^*)}\]
are quadratic and dual to each other (cf.~\cite[Examples~3.2.4]{lodval}).
We shall identify the symmetric algebra $\mathcal{S}_\bullet(V)$ with the commutative polynomial algebra
$\Bbbk[x_1,\dots,x_d]$; the grading induced by $T_\bullet(V)$ coincides with polynomial degree.
\end{itemize}
\end{ex}

\begin{ex}[Constructions of quadratic algebras]\label{ex:quadratic2}
Let $A_\bullet$ and $B_\bullet$ be two quadratic algebras, with spaces of relators $\Omega_A$ and $\Omega_B$, respectively.
From $A_\bullet$ and $B_\bullet$ one may construct new quadratic algebras as follows (cf.~\cite[\S~3.1]{PoliPosi}).
\begin{itemize}
\item[(a)] The \emph{direct sum} $C_\bullet=A_\bullet\sqcap B_\bullet$ of $A_\bullet$ and $B_\bullet$
 is the connected quadratic algebra $C_\bullet$ such that $C_n=A_n\oplus B_n$ for every $n\geq1$ 
and such that $ab=ba=0$ for every $a\in A_{+},b\in B_{+}$.
In other words,
\[ C_\bullet=\frac{T_\bullet(A_1\oplus B_1)}{(\Omega)},\qquad 
\text{with }\Omega=\Omega_A\oplus \Omega_B\oplus(A_1\otimes B_1)\oplus(B_1\otimes A_1). \]
 \item[(b)] The \emph{free product} $C_\bullet=A_\bullet\sqcup B_\bullet$ of $A_\bullet$ and $B_\bullet$
is the connected quadratic algebra $C_\bullet$ with $C_1=A_1\oplus B_1$ and
\[ C_\bullet=\frac{T_\bullet(A_1\oplus B_1)}{(\Omega)},\qquad 
\text{with }\Omega=\Omega_A\oplus \Omega_B.\]
 \item[(c)] The \emph{symmetric tensor product} $C_\bullet=A_\bullet\otimes^1 B_\bullet$ of $A_\bullet$
 and $B_\bullet$
is the quadratic algebra $C_\bullet$ with $C_1=A_1\oplus B_1$ and
\[ C_\bullet=\frac{T_\bullet(A_1\oplus B_1)}{( \Omega)},\qquad \text{with }\Omega=\Omega_A\oplus \Omega_B\oplus I, \]
where $I$ is the subspace generated by the elements $ab-ba$ with $a\in A_1$ and $b\in B_1$.
 \item[(d)] The \emph{skew-symmetric tensor product} $C_\bullet=A_\bullet\otimes^{-1} B_\bullet$  of $A_\bullet$
 and $B_\bullet$
is the quadratic algebra $C_\bullet$ with $C_1=A_1\oplus B_1$ and 
\[C_\bullet=\frac{T_\bullet(A_1\oplus B_1)}{( \Omega)},\qquad \text{with }\Omega=\Omega_A\oplus \Omega_B\oplus I,\]
where $I$ is the subspace generated by the elements $ab+ba$ with $a\in A_1$ and $b\in B_1$.
\end{itemize}
\end{ex}

\begin{rem}\label{rem:de Morgan}
The following De Morgan dualities hold (cf.~\cite[Chapter 3, Corollary 1.2]{PoliPosi}):
\[
\begin{array}{rcl}
(A_\bullet\sqcap B_\bullet)^!&=&A^!_\bullet\sqcup B^!_\bullet;\\
(A_\bullet\sqcup B_\bullet)^!&=&A^!_\bullet\sqcap B^!_\bullet;\\
(A_\bullet\otimes^1 B_\bullet)^!&=&A^!_\bullet\otimes^{-1} B^!_\bullet;\\
(A_\bullet\otimes^{-1} B_\bullet)^!&=&A^!_\bullet\otimes^1 B^!_\bullet.
\end{array}
\]
\end{rem}



\subsection{Cohomology of algebras}\label{ssec:cohomology of alg}
\label{ssec:graded}
This subsection, the next and part of \S \ref{ssec:PBW prop} are intended as a reference for the interested reader about the cohomological background and definition of
Koszul algebras.
The reader who wishes to proceed directly to the main results can safely skip to Remark \ref{rem:PBW} in \S \ref{ssec:PBW prop}, bearing in mind that the PBW property, which is what we actually prove for Galois cohomology of elementary type pro-$p$ groups, implies Koszulity.

Let $L$ and $M$ be modules over the graded $\Bbbk$-algebra $A_\bullet$. Let $\Hom_{A_\bullet}(L,M)$ denote the space of $A_\bullet$-module homomorphisms from $L$ to $M$.
For every left $A_\bullet$-module $M$, the contravariant functor $\Hom_{A_\bullet}(\slot,M)$ is left-exact
but in general not exact, hence it has right derived functors $\Ext_{A_\bullet}^i(\slot,M)$. 
These are defined as follows: for another $A_\bullet$-module $L$, choose a projective resolution $(P_i, d_i)$ of $L$;
then $\ext_{A_\bullet}^i(L,M)$ is the $i$th cohomology group of the complex $(\Hom_{A_\bullet}(P_i,M), d_{i+1}^*)$.

The cohomology $H^\bullet(A_\bullet,\Bbbk)$ of $A_\bullet$ is defined to be the graded $\Bbbk$-module
\[\ext_{A_\bullet}^\bullet(\Bbbk,\Bbbk)=\bigoplus_{i\in\N}\Ext_{A_\bullet}^i(\Bbbk,\Bbbk).\]
The terms $\Ext_{A_\bullet}^i(\Bbbk,\Bbbk)$ may be computed as the cohomology of the \emph{normalized cobar complex}
\[
Cob^i(A_\bullet)=\Hom_{A_\bullet}(A_\bullet\otimes A_+^{\otimes i}\otimes \Bbbk,\Bbbk)\cong(A_+^{\otimes i})^*,
\]
 with differential $\delta^i\colon (A_+^{\otimes i})^*\to (A_+^{\otimes i+1})^*$ 
defined by \[\delta^{i}(\varphi)(a_1\otimes\dots\otimes a_{i+1})=\sum_{k=1}^{i}(-1)^k\varphi(a_1\otimes\cdots \otimes a_ka_{k+1}\otimes\cdots \otimes a_{i+1})\]for $\varphi\in (A_+^{\otimes i})^*$
(cf.~\cite[\S~1.1]{PoliPosi}, where the differentials are defined with the opposite sign).
For $\varphi\in Cob^i(A_\bullet)$ and $\psi\in Cob^j(A_\bullet)$,
we can define the tensor product $\varphi\otimes\psi\in Cob^{i+j}(A_\bullet)$ as the map
\begin{equation}
 (\varphi\otimes\psi)(a_1\otimes\dots\otimes a_{i+j})=\varphi(a_1\otimes\dots\otimes a_i)\psi(a_{i+1}\otimes\dots\otimes a_{i+j}).
\end{equation}
This tensor product satisfies
\(
d(\varphi\otimes\psi)=d\varphi\otimes\psi + (-1)^i\varphi\otimes d\psi
\)
and so induces a well-defined product on cohomology classes:
\[\cup\colon\ext_{A_\bullet}^i(\Bbbk,\Bbbk)\otimes\ext_{A_\bullet}^j(\Bbbk,\Bbbk) \to \ext_{A_\bullet}^{i+j}(\Bbbk,\Bbbk)\]
given by $[\varphi]\cup[\psi] = [\varphi\otimes\psi]$.
The map $\cup$ is a special case of Yoneda product called \emph{cup product} and
turns $\ext_{A_\bullet}^\bullet(\Bbbk,\Bbbk)$ into a connected graded $\Bbbk$-algebra. 

Thanks to the finite-type assumption, the grading on $A_\bullet$ induces an additional grading
on $H^\bullet(A_\bullet,\Bbbk)$, as follows.
The isomorphisms 
\begin{equation}
\begin{split}
  &\rho_{i,j} \colon A_i^*\otimes A_j^*\longrightarrow (A_i\otimes A_j)^*\\
 &\rho_{i,j}(f\otimes g) \colon (a\otimes b)\longmapsto f(a)g(b)
\end{split}
\end{equation}
define a map $\Delta=\rho_{\bullet,\bullet}^{-1}\circ\mu^*:A_+^*\to A_+^*\otimes A_+^*$ that is dual to the algebra multiplication $\mu:A_+\otimes A_+\to A_+$.

Then each $Cob^{i}(A_\bullet)$ decomposes as the direct sum of the submodules
\[\begin{split}
   Cob^{i,j}({A_\bullet})\cong(A_+^{\otimes i})^*_j 
   &= \left(\bigoplus_{\substack{k_1+\dots+k_i=j,\\ \text{all } k_s\geq 1}} A_{k_1}\otimes\dots\otimes A_{k_i}\right)^*\\
   &\cong \bigoplus_{\substack{k_1+\dots+k_i=j,\\ \text{all } k_s\geq 1}} \left(A_{k_1}^*\otimes\dots\otimes A_{k_i}^*\right)
  \end{split}
\]
(the condition that all $k_s\geq 1$ is required to stay in $A_+$).
In this description of the cobar complex, the differential reads 
\[
\begin{array}{rcl}
d:\bigoplus_{k_1+\dots+k_i=j,\, \forall k_s\geq 1}\!\left(\!A_{k_1}^*\!\otimes\!\dots\!\otimes\! A_{k_i}^*\!\right)\!\!&\!\!\!\to\!\!&\!\!\! \bigoplus_{k_1+\dots+k_{i+1}=j,\, \forall k_s\geq 1}\!\left(\!A_{k_1}^*\!\otimes\!\dots\!\otimes\! A_{k_{i+1}}^*\!\right)\\
d(\varphi_1\otimes\dots\otimes\varphi_i)\!\!&\!\!=\!\!&\!\!\sum_{k=1}^i(-1)^k\varphi_1\otimes\dots\otimes\Delta\varphi_k\otimes\dots\otimes\varphi_i.
\end{array}
\]
The differential respects the second grading, i.e. $d(\mathcal{C}^{i,j})\subseteq\mathcal{C}^{i+1,j}$,
so this grading passes to cohomology. The cup product is compatible with both gradings:
\[
\cup:\ext_{A_\bullet}^{i,j}(\Bbbk,\Bbbk)\otimes\ext_{A_\bullet}^{r,s}(\Bbbk,\Bbbk)  \to  \ext_{A_\bullet}^{i+r,j+s}(\Bbbk,\Bbbk).
\]
Hence $\ext_{A_\bullet}^{\bullet,\bullet}(\Bbbk,\Bbbk)$ becomes a \emph{bigraded algebra}.
The first grading is called the \emph{homological} grading;
the second one, induced by $A_\bullet$, is called the \emph{internal} one.


\subsection{Koszul property}\label{ssec:koszul prop}
\begin{defn}\label{def:koszul algebra}
A quadratic algebra $A_\bullet$ is a \emph{Koszul algebra} if the trivial $A_\bullet$-module $\Bbbk$
has a graded projective resolution $(P_\bullet, d_\bullet)$ with each $P_i$ generated by its component of degree $i$: $P_i=A_\bullet\cdot(P_i)_i$ (we require all the differentials $d_i$ to have degree $0$; gradings on the $P_i$s are set accordingly).
\end{defn}
If, as in our case, the $\ext$ algebra can be equipped with a well-defined internal grading
(see \S~\ref{ssec:graded}), this is equivalent to $\ext_{A_\bullet}^{\bullet,\bullet}(\Bbbk,\Bbbk)$ being concentrated on the diagonal:
\[
\ext^{i,j}_{A_\bullet}(\Bbbk,\Bbbk)= 0\mbox{ for }i\neq j.
\]

\begin{ex}\label{ex:koszul algebras}
\begin{enumerate}
\item[(a)] Tensor algebras, trivial algebras, symmetric algebras and exterior algebras are Koszul (cf.~\cite[\S~3.4.5]{lodval}).
\item[(b)] Each of the direct sum, the free product, the symmetric tensor product and the skew-symmetric tensor product of two algebras $A_\bullet$ and $B_\bullet$ is Koszul if and only if both $A_\bullet$ and $B_\bullet$ are Koszul (cf.~\cite[\S~3.1]{PoliPosi}).
\end{enumerate}
\end{ex}

The most interesting feature of Koszul algebras is that their cohomology admits a very explicit description.

\begin{thm}[Priddy, \cite{priddy}]\label{thm:koszul cohomology}
An algebra $A_\bullet$ is Koszul if and only if there is a (degree-$0$) isomorphism of graded algebras 
\[H^\bullet(A_\bullet,\Bbbk)=\ext^{\bullet}_{A_\bullet}(\Bbbk,\Bbbk)\cong A_\bullet^!\]
\end{thm}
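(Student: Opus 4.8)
The plan is to compute the bigraded cohomology algebra $\ext^{\bullet,\bullet}_{A_\bullet}(\Bbbk,\Bbbk)$ along its diagonal directly from the normalized cobar complex, show that this diagonal part is \emph{always} isomorphic to $A_\bullet^!$, and then exploit Koszulity (the vanishing of cohomology off the diagonal) to pass from the diagonal to the full algebra. First I would record the general constraint that $\ext^{i,j}_{A_\bullet}(\Bbbk,\Bbbk)=0$ whenever $j<i$: in the decomposition $Cob^{i,j}(A_\bullet)\cong\bigoplus_{k_1+\dots+k_i=j,\,k_s\geq1}A^*_{k_1}\otimes\dots\otimes A^*_{k_i}$ the indexing set is empty unless $j\geq i$. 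Thus the diagonal $\ext^{i,i}$ occupies the lowest internal degree in each homological degree.

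Next comes the core computation. Since $k_1+\dots+k_i=i$ with all $k_s\geq1$ forces every $k_s=1$, we get $Cob^{i,i}(A_\bullet)\cong(A_1^*)^{\otimes i}=(V^*)^{\otimes i}$ with $V=A_1$. The outgoing differential $Cob^{i,i}\to Cob^{i+1,i}$ vanishes because $Cob^{i+1,i}=0$ (one cannot write $i$ as a sum of $i+1$ positive integers). The incoming differential originates from $Cob^{i-1,i}$, where exactly one index equals $2$ and the rest equal $1$; since $\Delta$ preserves internal degree and $\Delta(A_1^*)\subseteq(A_+^*\otimes A_+^*)_1=0$, only the $A_2^*$ slots contribute, through $\Delta|_{A_2^*}$. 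Now $\Delta|_{A_2^*}$ is dual to the multiplication $V\otimes V\twoheadrightarrow A_2=(V\otimes V)/\Omega$, hence is exactly the inclusion $A_2^*\cong\Omega^\perp\hookrightarrow V^*\otimes V^*$. Therefore the image of the incoming differential in degree $i$ is $\sum_{s}(V^*)^{\otimes s}\otimes\Omega^\perp\otimes(V^*)^{\otimes(i-2-s)}$, precisely the degree-$i$ component of the two-sided ideal $(\Omega^\perp)\subseteq T_\bullet(V^*)$. Passing to cohomology yields $\ext^{i,i}_{A_\bullet}(\Bbbk,\Bbbk)\cong\bigl(T_\bullet(V^*)/(\Omega^\perp)\bigr)_i=A_i^!$. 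Because the cup product is computed by tensoring (i.e.\ concatenating) cochains and respects the bigrading, hence sends the diagonal to the diagonal, this vector-space identification upgrades to an isomorphism of graded algebras $\bigoplus_i\ext^{i,i}_{A_\bullet}(\Bbbk,\Bbbk)\cong A_\bullet^!$. I stress that this step uses only that $A_\bullet$ is quadratic, not Koszulity.

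Finally I would close both implications. If $A_\bullet$ is Koszul, then $\ext^{i,j}=0$ for $i\neq j$, so $H^\bullet(A_\bullet)=\bigoplus_i\ext^{i,i}\cong A_\bullet^!$ by the previous step. Conversely, the diagonal inclusion always supplies an injective graded-algebra map $A_\bullet^!\hookrightarrow H^\bullet(A_\bullet)$, and combining the diagonal computation with $\ext^{n,j}=0$ for $j<n$ gives $\dim_{\Bbbk}A_n^!=\dim_{\Bbbk}\ext^{n,n}\leq\sum_{j\geq n}\dim_{\Bbbk}\ext^{n,j}=\dim_{\Bbbk}H^n(A_\bullet)$, with equality if and only if $\ext^{n,j}=0$ for all $j>n$. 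If $H^\bullet(A_\bullet)\cong A_\bullet^!$ as graded algebras, then $\dim_{\Bbbk}H^n=\dim_{\Bbbk}A_n^!$ for every $n$ (all finite by the finite-type hypothesis), forcing every off-diagonal term to vanish, i.e.\ $A_\bullet$ is Koszul.

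The step I expect to be the main obstacle is the precise identification of the image of the cobar differential with the quadratic-dual ideal: verifying that $\Delta|_{A_2^*}$ is exactly the embedding $A_2^*\cong\Omega^\perp$ and that the $A_1^*$ slots contribute nothing. This is what makes $A_\bullet^!$, rather than some other quadratic algebra, appear, and it is where the finite-dimensionality identifications $(V\otimes V)^*=V^*\otimes V^*$ and the definition of $\Omega^\perp$ must be handled with care.
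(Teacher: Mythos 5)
Your proposal is correct and takes essentially the approach the paper has in mind: the paper states the result with a citation to Priddy rather than proving it, but the machinery it sets up in \S 2.2--2.3 (the bigraded cobar complex and the criterion that Koszulity is equivalent to $\ext^{i,j}_{A_\bullet}(\Bbbk,\Bbbk)=0$ for $i\neq j$) is exactly what you execute, including the standard L\"ofwall/Polishchuk--Positselski identification of the diagonal subalgebra $\bigoplus_i\ext^{i,i}_{A_\bullet}(\Bbbk,\Bbbk)$ with $A_\bullet^!$ and the dimension count for the converse. In particular, your two key verifications --- that $\Delta|_{A_2^*}$ is precisely the inclusion $\Omega^\perp\hookrightarrow V^*\otimes V^*$ dual to the surjection $V\otimes V\twoheadrightarrow A_2$, and that an \emph{abstract} degree-$0$ graded isomorphism already forces off-diagonal vanishing via local finite-dimensionality --- are sound as written.
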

Also, a quadratic $\Bbbk$-algebra $A_\bullet$ is Koszul if and only if its quadratic dual $A_\bullet^!$
is Koszul (cf.~\cite[Corollary~2.3.3]{PoliPosi}).

For further details we refer to \cite{begiso}, \cite[Chapter 3]{lodval}, \cite[Chapters 1-2]{PoliPosi},
\cite[\S~2]{positselsky:koszul}, \cite{priddy}.


\subsection{PBW property}\label{ssec:PBW prop}
Checking Koszulity with the definition is still a difficult task. PBW property is a strictly stronger condition, but it is generally easier to check.

Let $A_\bullet=Q(V,\Omega)$ be a quadratic algebra.
Suppose a totally ordered basis $\{x_1,\dots,x_d\}$ of $V$ is given. This datum is equivalent to a total order $\preceq_1$ on the set $I=\{1,\dots,d\}$. The set $\mathcal{I}=\sqcup_{n\in\N}I^n$ is the set of multiindices that uniquely identify each monomial in $T_\bullet(V)$ (putting $I^0=\{0\}$ and using it as the singleton set identifying the empty monomial $1$). The concatenation of multiindices $(i_1,\dots,i_k)*(j_1,\dots,j_h)=(i_1,\dots,i_k,j_1,\dots,j_h)$ turns $\mathcal{I}$ into a monoid with $0$ as neutral element.

\begin{defn}\label{def:admissible order}
Let $V=\bigoplus_{i\in I}\gen{x_i}$, $\mathcal{I}$ and $\star$ be as before. An \emph{admissible order} (sometimes called \emph{monoid order}) on $\mathcal{I}$ is a total order $\preceq$ such that
\begin{itemize}
\item[(i)] $\preceq$ extends $\preceq_1$, that is, the two orders coincide on $I$;
\item[(ii)] for all $s\in \mathcal{I}$, $0\preceq s$;
\item[(iii)] for all $r,s,t\in \mathcal{I}$, $r\prec s\Rightarrow (r\star t\prec s\star t\mbox{ and }t\star r\prec t\star s)$.
\end{itemize}
\end{defn}

\begin{ex}\label{ex:deglex}
Given any total order $\preceq_1$ on $I$, the associated degree-lexicographic order
$\preceq_{\mathrm{deglex}}$ on $\mathcal{I}$ is the admissible order defined as follows.
For $s=(i_1,\dots,i_m)\in I^m$ and $t=(j_1,\dots,j_n)\in I^n$, $s\preceq t$
if and only if either $m<n$ or $m=n$ and there exists $k\in \{1,\dots,n\}$ such that
$i_1=j_1,i_2=j_2,\dots,i_{k-1}=j_{k-1}$, and $i_k\preceq_1 j_k$. 
\end{ex}

If one accepts the axiom of countable choice, an admissible order $\preceq$
gives an order isomorphism $index_\preceq:\N\to\mathcal{I}$ such that $index_\preceq(0)=0$. For instance, $index_{\preceq_{\mathrm{deglex}}}(0)=0$, $index_{\preceq_{\mathrm{deglex}}}(1)=1$, \dots, $index_{\preceq_{\mathrm{deglex}}}(d)=d$,
$index_{\preceq_{\mathrm{deglex}}}(d+1)=(1,1)$, \dots.

We obtain an increasing and exhaustive filtration $F_\bullet T_\bullet(V)$ on $T_\bullet(V)$:
\[
F_nT_\bullet(V)=\bigoplus_{\substack{s\in \mathcal{I}\\ s\preceq index_{\preceq}(n)}}\gen{\underline{x}_s},
\]
where we use the multiindex notation $\underline{x}_s$ to denote the monomial identified by $s$. This in turn produces an increasing and exhaustive filtration $F_\bullet A$ on $A_\bullet$, with $F_nA$ given by the image $F_nT_\bullet(V)$ under the canonical projection $T_\bullet(V)\to A_\bullet$. The filtration on $A_\bullet$ gives rise to a weight-graded module 
\[
\gr_\bullet A=\bigoplus_{n\in\N}F_{n+1}A_\bullet/F_{n}A_\bullet,
\]
which depends on the admissible order $\preceq$, and which is a graded algebra with the product and the grading induced from $A_\bullet$. We call it the \emph{weight-graded algebra} of $A_\bullet$ induced by $\preceq$.

The product on $\gr_\bullet A$ is usually simpler than that on $A_\bullet$, nevertheless, a spectral sequence argument shows that if the former is Koszul, the latter is Koszul as well (cf.~\cite[Proposition~4.2.1]{lodval}, \cite[\S~4.7]{PoliPosi} or \cite[\S~5]{priddy}).

The monomial identified by the $\preceq$-maximal multiindex within a given relator $r\in \Omega$ is called
the \emph{leading monomial} of $r$ and denoted $lm(r)$.
Letting $\Omega_{lead}$ be the vector subspace generated by the leading monomials of all elements of $\Omega$,
a tentative presentation of $\gr_\bullet A$ is $T_\bullet(V)/(\Omega_{lead})$. In fact, there is an epimorphism
of graded algebras $\rho:T_\bullet(V)/(\Omega_{lead})\twoheadrightarrow\gr_\bullet A$ which is an isomorphism in degrees $0,1,2$.
The Diamond Lemma (cf.~the original reference \cite{bergman}, or \cite[\S~4.2.4, \S~4.3.5]{lodval})
asserts that the injectivity of $\rho$ in degree $3$ and the Koszulity of $T_\bullet(V)/(\Omega_{lead})$ are sufficient to get the isomorphism $T_\bullet(V)/(\Omega_{lead})\cong\gr_\bullet A$, and hence the Koszulity of $A_\bullet$. On the other side, $T_\bullet(V)/(\Omega_{lead})$ is a \emph{monomial} quadratic algebra, that is, the space of relators is linearly spanned by monomials of degree $2$. Such algebras are always Koszul, for example as a consequence of Backelin's Criterion, cf.
~\cite[\S~2.4]{PoliPosi}.

The injectivity of $\rho$ in degree $3$ can be algorithmically checked with the Rewriting Method
(cf.~\cite[\S~4.1]{lodval}).

First, one chooses a basis $B=\{r_1,\dots,r_m\}$ of $\Omega$. By elementary linear algebra over a field,
one can always suppose that the coefficients of $lm(r_1),\dots,lm(r_m)$ are $1$ and that,
for each $i$, $lm(r_i)$ does not appear as a monomial in any other relator $r_j$, $j\neq i$. A basis with these additional properties is said to be \emph{normalized}.

Each relation $r_i=0$ can be interpreted as a rewriting rule
\[
lm(r_i)\rightsquigarrow-\mbox{(sum of non maximal monomials)}.
\]
Rewriting rules can be subsequently applied to an initial monomial until the result does not contain
any more leading terms. This final shape is usually called a \emph{normal form} of the initial monomial.
The process can be depicted with a directed graph, whose vertices are the polynomials obtained as steps of the aforementioned process
and whose edges connect a polynomial to a second polynomial that is obtained applying
a single rewriting rule to the first.
Normal forms are then the terminal vertices of these \emph{rewriting graphs}.

A degree $3$ monomial $x_ax_bx_c$ is \emph{critical} if both $x_ax_b$ and $x_bx_c$ are leading terms of relators in $B$. This implies that there are two possible rewriting processes on it, starting with either the rewriting rule on $x_ax_b$ or that on $x_bx_c$. The resulting rewriting graph may then have two distinct terminal vertices, that is, the critical monomial may have two distinct normal forms. A critical monomial is \emph{confluent} if its rewriting graph has only one terminal vertex.

The (cosets of the) terminal vertices of the rewriting graph of a critical monomial are equal in $\gr_\bullet A$, but distinct in $T_\bullet(V)/(\Omega_{lead})$. Since this is the only obstruction to the injectivity of $\rho$ in degree $3$, this injectivity is equivalent to the confluence of every critical monomial. $A_\bullet$ is called a \emph{PBW algebra} if there exist bases for $V$ and $\Omega$ and an admissible order as above with respect to which every critical monomial is confluent. In that case, the above basis of $V$ is called a set of \emph{PBW generators} of $A_\bullet$.

The whole discussion can be then summarised in the fact that a (quadratic) PBW algebra is Koszul.

\begin{rem}\label{rem:PBW}
PBW property is preserved under the quadratic dual construction.
More precisely, if $\{x_1,\dots,x_n\}$ is a set of PBW generators of the quadratic algebra $A_\bullet$
under the suitable order $\preceq$, then the dual basis $\{x_1^*,\dots,x_n^*\}$ is a set
of PBW generators of $A_\bullet^!$ under the \emph{opposite} suitable order:
$s\preceq^{op} t \Leftrightarrow t\preceq s$ (cf.~\cite[\S~4.3.9]{lodval}).
\end{rem}

\begin{ex}\label{ex:PBW}The following are PBW algebras that will be relevant subsequently.
\begin{enumerate}
\item[(a)] A quadratic monomial algebra is a quadratic algebra admitting a presentation in which all relators are (noncommutative) monomials of degree $2$ in the generators. All quadratic monomial algebras are PBW. Indeed, consider a set of algebra generators with respect to which all relators are quadratic monomials. Then all the associated critical monomials reduce to $0$ after one step in either way the rewriting rules are applied. In particular, tensor algebras and trivial algebras are PBW.
\item[(b)] Symmetric and exterior algebras are PBW under the degree-lexicographic order.
\item[(c)] Let $A_\bullet=Q(V_A,\Omega_A)$ and $B_\bullet=Q(V_B,\Omega_B)$ be two quadratic PBW algebras. Then the quadratic algebras $A_\bullet\sqcap B_\bullet$, $A_\bullet\sqcup B_\bullet$, $A_\bullet\otimes^1 B_\bullet$ and $A_\bullet\otimes^{-1}B_\bullet$ introduced in Example \ref{ex:quadratic2} are PBW (cf.~\cite[\S~4.4]{PoliPosi}). In each case, a set of PBW generators is the union of the sets of PBW generators of $A_\bullet$ and $B_\bullet$.
\end{enumerate}

\end{ex}

\begin{lem}\label{lem:one-relator PBW}
Let $A=\Bbbk\gen{x_1,\dots,x_d\mid r}=Q(\gen{x_1,\dots,x_d}, \gen{r})$ a quadratic algebra presented with a single relator $r$. Suppose that there is an index $i\in I=\{1,\dots,d\}$ such that $x_i^2$ is not a monomial of $r$ but some $x_ix_j$, $j\neq i$, is. Then $A$ is PBW.
\end{lem}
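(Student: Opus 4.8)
The plan is to exhibit a single choice of ordered basis and admissible order for which the relator $r$ has \emph{no} associated critical monomials at all; since confluence is then vacuously satisfied for every critical monomial, $A$ is PBW by the Rewriting Method criterion recalled just before the statement.

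First I would keep the given basis $\{x_1,\dots,x_d\}$ of $V$ and take, as basis of the one-dimensional relator space $\Omega=\gen{r}$, the single vector $r$, rescaled so that its leading monomial has coefficient $1$. Since there is only one relator, the requirement that the leading monomial not occur in any other relator is vacuous, so $\{r\}$ is automatically a normalized basis of $\Omega$ in the sense of Subsection~\ref{ssec:graded}.

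The heart of the argument is the choice of the total order $\preceq_1$ on $I$ and the induced degree-lexicographic order $\preceq$ of Example~\ref{ex:deglex}. I would choose $\preceq_1$ so that the distinguished index $i$ from the hypothesis is the \emph{maximal} element of $I$. Then, among the degree-$2$ monomials appearing in $r$, those of the form $x_i x_b$ dominate all others, because any monomial $x_a x_b$ with $a\neq i$ has $a\prec_1 i$ and hence $x_a x_b\prec x_i x_c$ for every $c$. Since $x_i x_j$ with $j\neq i$ is a monomial of $r$, at least one monomial of the form $x_i x_b$ does occur, so $lm(r)=x_i x_{b^*}$, where $b^*$ is the $\preceq_1$-largest second index with $x_i x_{b^*}$ a monomial of $r$. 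Crucially, $x_i^2$ is \emph{not} a monomial of $r$ by hypothesis, so $b^*\neq i$, and $lm(r)$ is a \emph{mixed} monomial $x_i x_{b^*}$ with $i\neq b^*$.

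It then remains only to verify that this forces the absence of critical monomials. A critical monomial $x_a x_b x_c$ would require both $x_a x_b$ and $x_b x_c$ to be leading monomials of relators in the normalized basis; but the only relator is $r$, so one would need $x_a x_b=x_b x_c=lm(r)=x_i x_{b^*}$. The first equality gives $b=b^*$ and the second gives $b=i$, whence $i=b^*$, contradicting $i\neq b^*$. Hence no critical monomials exist, so $\rho$ is trivially an isomorphism in degree $3$ and $A$ is PBW. The only genuinely delicate point is arranging that the leading monomial be mixed rather than a square; once the order is set so that $i$ is maximal, the two hypotheses on $r$ (presence of some $x_i x_j$, absence of $x_i^2$) do exactly this work, and everything else is formal.
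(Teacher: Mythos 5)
Your proof is correct and follows essentially the same route as the paper: order the generators so that $i$ is $\preceq_1$-maximal, observe that the hypotheses force the leading monomial of $r$ to be mixed, and conclude that no critical monomials exist. The only (immaterial) difference is that the paper also fixes $j$ as the second-to-maximal index so that $lm(r)=x_ix_j$ exactly, whereas you allow any order with $i$ maximal and identify $lm(r)=x_ix_{b^*}$ with $b^*\neq i$.
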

\begin{proof}
We can choose any total order $\preceq_1$ on $I$ with maximal element $i$ and second-to-maximal element $j$. As admissible order on $\mathcal{I}$, we use the associated degree-lexicographic order $\preceq_{\mathrm{deglex}}$ introduced in Example \ref{ex:deglex}.
Then the leading monomial $lm(r)$ is $x_ix_j$, and since $j\neq i$, there are no critical monomials at all.
\end{proof}


\section{Filtrations on groups and associated algebras}
\label{sec:zass}
In this section we define a filtration on an arbitrary group $G$. It provides a stratification of $G$ into layers
in which the group product has a simpler description.
The resulting graded object has a natural structure of a restricted Lie algebra (see Definition~\ref{defi:restricted}).
This structure is intimately related to the ring-theoretic structure of the group algebra of $G$ over $\F_p$.
Since group algebras are the basic ingredients from which group cohomology is defined, the restricted Lie algebra
of a pro-$p$ group is expected to play a significant role in Galois cohomology.
This will be addressed in the following sections.

Even though the contents of this section applies to any group, we are mostly interested in pro-$p$ or profinite groups.
Exercise (6.d) in \cite[Ch.~I, \S~4.2]{serre} asks to prove that the topology of a finitely generated pro-$p$ group
is determined by the group structure: more precisely, any subgroup of finite index is open.
A full proof of this can be found in \cite[Theorem~1.17]{ddsms:analytic}.
The much more difficult statement that the same is true
for every finitely generated profinite group was proved in \cite{nikseg1, nikseg2}.
However, it is not true that every subgroup of a finitely generated profinite (or pro-$p$) group is closed.
We shall therefore adopt the usual
\begin{conv}\rm \
{When dealing with profinite (or pro-$p$) groups, every generating set is to be intended in the topological sense, every subgroup is understood to be closed and every map is understood to be continuous}.
\end{conv}

\subsection{Filtrations}
\label{ssec:zass}
Given two subgroups $C_1$ and $C_2$ of $G$, $[C_1,C_2]$ is the subgroup of $G$ generated by the commutators
\[ [x,y] = (y^{-1})^x\cdot y = x^{-1}y^{-1}xy , \qquad x\in C_1 , y\in C_2 .\]
Moreover, for $n\geq1$, $C_1^n$ is the subgroup of $G$ generated by the elements $g^n$, $g\in C_1$.

\begin{defn}\label{defi:zass}
Let $G$ be a group.
\begin{itemize}
\item[(i)] The \emph{descending central series} $\{\gamma_i(G)\}$ of $G$ is defined by
\[\begin{array}{lclr}
\gamma_1(G)&=&G& \\
\gamma_{n}(G)&=&[G,\gamma_{n-1}(G)], & n\geq2.
\end{array}
\]
\item[(ii)]
The \emph{$p$-Zassenhaus filtration} (or simply \emph{Zassenhaus filtration}, if the prime is clear from the context) of $G$ is 
defined by
\begin{equation}\label{zassenhaus defi}
\begin{array}{lclr}
G_{(1)}&=&G& \\
 G_{(n)}&=&G_{(\lceil n/p\rceil)}^p\cdot\prod_{i+j=n}[G_{(i)},G_{(j)}], & n\geq2.
\end{array}
\end{equation}
Here $\lceil n/p\rceil$ is the least integer $h$ such that $hp\geq n$.
\end{itemize}
\end{defn}
Observe that $G_{(2)}=G^p[G,G]$, which, in the case of $G$ being a pro-$p$ group, equals the Frattini subgroup $\Phi(G)$.
\begin{rem}
The subgroups $G_{(i)}$ are often called the \emph{modular dimension subgroups} of $G$, for example in \cite[Chapter 11]{ddsms:analytic}. Accordingly, the Zassenhaus filtration is also known as the \emph{dimension series}.
\end{rem}

The Zassenhaus filtration of $G$ is the fastest descending series starting at $G$ satisfying
\[ [G_{(i)},G_{(j)}]\subseteq G_{(i+j)}\quad\text{and}\quad G_{(i)}^p\subseteq G_{(ip)}\]
for every $i,j\geq 1$.
Moreover, every quotient $G_{(n)}/G_{(n+1)}$ is a $p$-elementary abelian group, and thus a vector space over $\F_p$.
M.~Lazard established the useful formula
\begin{equation}\label{eq:Lazard}
 G_{(n)}=\prod_{ip^h\geq n}\gamma_i(G)^{p^h}, \qquad \text{for all }n\geq1
\end{equation}
(cf.~\cite[Theorem~11.2]{ddsms:analytic}).
In particular, 
\begin{equation}\label{eq:G3}
 G_{(3)}=\begin{cases} G^p[G,[G,G]] & \text{if }p\neq2 \\ G^4[G,G]^2[G,[G,G]] & \text{if }p=2.\end{cases}
\end{equation}
%

\subsection{Group algebras}
Let $G$ be a group and let $\F_p[G]$ be its group algebra over the finite field $\F_p$.
The \emph{augmentation ideal} $I_G$ of $\FpG$ is the kernel of the augmentation map $\epsilon\colon\FpG\to\F_p$,
given by $g\mapsto1$ for every $g\in G$. The augmentation ideal induces a filtration $F^n\FpG={I_G}^n$, and defines an associated graded object 
\[\gr\FpG=\bigoplus_{n\geq0}{I_G}^{n}/ {I_G}^{n+1},\] 
called the \emph{graded group $\F_p$-algebra} of $G$.
To avoid confusion with the previous notation for subgroups, we stress that ${I_G}^0=\FpG$ and that ${I_G}^n$ denotes the usual $n$-fold ideal product of ${I_G}$.

If $G$ is a pro-$p$ group, there are completed versions of all the previous concepts (cf.~\cite[\S~7.1]{koch}).
\begin{defn}\label{defi:FpG}
The \emph{complete group algebra} of a pro-$p$ group $G$ is 
\[
\FppG=\varprojlim_{U}\F_p[G/U]
\]
where $U$ runs through all open normal subgroups of $G$.
The algebra $\FppG$ is a topological, compact $\F_p$-algebra.
The closed two-sided ideal $\widehat{I_G}$ generated by the closure of the kerne
of the augmentation map $\epsilon\colon\FppG\to\F_p$ is open and it is called the \emph{complete augmentation ideal}.

The complete augmentation ideal induces a filtration $F^n\FppG=\widehat{I_G}\!\ ^n$, and defines an associated graded object 
\[\gr\FppG=\bigoplus_{n\geq0}\widehat{I_G}\!\ ^{n}/ \widehat{I_G}\!\ ^{n+1},\] 
called the \emph{complete graded group $\F_p$-algebra} of $G$.
Here $\widehat{I_G}\!\ ^0=\FppG$ and $\widehat{I_G}\!\ ^n$ is the usual $n$-fold ideal product of $\widehat{I_G}$.
\end{defn}

The Zassenhaus filtration and the graded group $\F_p$-algebra of a group are related by the following fundamental result due
to S.~A.~Jennings (cf.~\cite{jennings}, \cite[\S~12.2]{ddsms:analytic}).
\begin{prop}\label{prop:zassenhaus grFpG}
Let $G$ be a group. Then $G_{(n)}=\{g\in G\mid g-1\in I_G^n\}$ for all $n\geq1$.
\end{prop}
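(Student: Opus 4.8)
The plan is to compare the Zassenhaus filtration with the \emph{dimension subgroups}
\[
D_n = D_n(G) = \{g\in G \mid g-1\in I_G^n\},\qquad n\geq 1,
\]
and to show the two filtrations coincide. First I would record that $D_1=G$ (immediate, since $g-1\in I_G$ for every $g$) and that each $D_n$ is a normal subgroup. Normality is a short computation inside $\FpG$: the identities $gh-1=(g-1)+(h-1)+(g-1)(h-1)$ and $g^{-1}-1=-g^{-1}(g-1)$ show that $D_n$ is closed under products and inverses, while $x^{-1}gx-1=x^{-1}(g-1)x$ together with the two-sidedness of the ideal $I_G^n$ gives normality. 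Since $D_{n+1}\subseteq D_n$, the $D_n$ form a descending chain of normal subgroups.

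For the inclusion $G_{(n)}\subseteq D_n$ I would check that $\{D_n\}$ satisfies the two structural conditions that characterise the Zassenhaus filtration. The commutator condition $[D_i,D_j]\subseteq D_{i+j}$ follows from
\[
[g,h]-1 = g^{-1}h^{-1}\bigl((g-1)(h-1)-(h-1)(g-1)\bigr),
\]
whose right-hand side lies in $I_G^{i+j}$ as soon as $g-1\in I_G^i$ and $h-1\in I_G^j$. The power condition $D_i^p\subseteq D_{ip}$ follows because in characteristic $p$ the intermediate binomial coefficients vanish, so $g^p-1=(1+(g-1))^p-1=(g-1)^p\in I_G^{ip}$ whenever $g-1\in I_G^i$. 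As the Zassenhaus filtration is the fastest descending series starting at $G$ with exactly these two properties (recorded after Definition~\ref{defi:zass}), minimality yields $G_{(n)}\subseteq D_n$ for every $n$.

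The reverse inclusion $D_n\subseteq G_{(n)}$ is the substantial part, and I expect it to be the main obstacle. It is equivalent to the injectivity, for every $m$, of the natural graded map
\[
G_{(m)}/G_{(m+1)}\longrightarrow I_G^m/I_G^{m+1},\qquad gG_{(m+1)}\longmapsto (g-1)+I_G^{m+1};
\]
granting injectivity, a short induction using the already-proved inclusion $G_{(n)}\subseteq D_n$ shows that an element of $D_n$ cannot sit strictly between two consecutive Zassenhaus layers, forcing it into $G_{(n)}$. To establish injectivity I would reduce to the free case, writing $G$ as a quotient of a free group $F$ on $x_1,\dots,x_d$ and using the Magnus embedding $\mu\colon\F_p[F]\hookrightarrow\FppX$, $x_i\mapsto 1+X_i$, under which $\mu$ identifies $\gr\F_p[F]$ with the free associative algebra $\FpX$ and carries $I_F^m/I_F^{m+1}$ onto its degree-$m$ part. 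For $F$ the images of the layers $F_{(m)}/F_{(m+1)}$ span the free restricted Lie algebra on the $X_i$ inside $\FpX$, and the Poincar\'e--Birkhoff--Witt theorem for restricted Lie algebras produces an ordered-monomial basis of $\gr\F_p[F]\cong\sU(L(F))$ whose nonvanishing delivers the desired injectivity. The genuine difficulty, and the main obstacle, is the passage from $F$ to $G=F/R$: one must track how the two-sided ideal generated by $\{r-1\mid r\in R\}$ interacts with the powers $I_F^n$, and it is this bookkeeping, rather than any single isolated hard step, that carries the weight of the argument.
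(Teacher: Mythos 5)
Your first half is sound and matches the standard argument: setting $D_n=\{g\in G\mid g-1\in I_G^n\}$, the identities you quote do show that the $D_n$ form a descending chain of normal subgroups with $[D_i,D_j]\subseteq D_{i+j}$ and $D_i^p\subseteq D_{ip}$ (the binomial computation $g^p-1=(g-1)^p$ in characteristic $p$ is exactly right), so minimality of the Zassenhaus filtration gives $G_{(n)}\subseteq D_n$. Likewise, your reduction of the reverse inclusion to the injectivity of the graded maps $G_{(m)}/G_{(m+1)}\to I_G^m/I_G^{m+1}$ is correct. But at that point the proposal stops being a proof: that injectivity \emph{is} the content of Jennings's theorem, and your proposed route to it --- prove it for a free group $F$ via the Magnus embedding and restricted PBW, then ``track'' how the two-sided ideal generated by $\{r-1\mid r\in R\}$ meets the powers $I_F^n$ --- leaves the decisive step entirely open, as you yourself concede. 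This is a genuine gap, not a routine bookkeeping deferral: the free-to-quotient passage is exactly the dimension subgroup problem, and its sensitivity to the coefficient ring is notorious (over $\Z$ the analogous identification of dimension subgroups with the lower central series is \emph{false}, by Rips's counterexample), so no soft argument about the ideal $(r-1)$ can close it; one must use characteristic $p$ quantitatively. For comparison, the paper itself does not prove this proposition but cites Jennings and \cite[\S~12.2]{ddsms:analytic}, and the cited proof proceeds quite differently, never reducing to a presentation.

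The standard (Jennings) argument works directly with $G$ and is a dimension count. Using only the easy inclusion $G_{(n)}\subseteq D_n$, one gets a well-defined morphism $L(G)\to(\gr\FpG)_L$, hence a surjection $\sU(L(G))\twoheadrightarrow\gr\FpG$ (surjective since $\gr\FpG$ is generated in degree one by the classes $g-1$). For $G$ a finite $p$-group one then chooses, for each $m$, elements $u_{m,1},\dots,u_{m,d_m}\in G_{(m)}$ whose images form an $\F_p$-basis of $G_{(m)}/G_{(m+1)}$, assigns weight $m$ to $u_{m,j}-1$, and shows by induction that the ordered products $\prod(u_{m,j}-1)^{a_{m,j}}$ with $0\leq a_{m,j}<p$ span $\FpG$, with those of total weight $\geq n$ spanning $I_G^n$. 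Since the number of such monomials is $p^{\sum_m d_m}=|G|=\dim_{\F_p}\FpG$, they are a basis, and counting by weight shows $\dim_{\F_p}I_G^n/I_G^{n+1}$ equals the dimension of the degree-$n$ part of $\sU(L(G))$ furnished by the restricted PBW theorem; hence the surjection above is an isomorphism, the maps $G_{(m)}/G_{(m+1)}\to I_G^m/I_G^{m+1}$ are injective, and $D_n=G_{(n)}$ follows by the argument you correctly gave. The general case then reduces to finite $p$-group quotients (any element of $I_G^n$ involves finitely many group elements, and the filtrations are compatible with quotients and finitely generated subgroups). So the restricted PBW theorem, which you invoke only for the free group, must instead be applied to $L(G)$ itself, with the Jennings spanning-and-counting argument replacing the free presentation; to repair your write-up you should abandon the reduction to $F/R$ and carry out this count.
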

Again, if $G$ is a pro-$p$ group, a completed version of Jennings's Theorem holds (cf.~\cite[\S~7.4]{koch}):
\begin{prop}\label{prop:complete zassenhaus grFpG}
Let $G$ be a pro-$p$ group. Then $G_{(n)}=\{g\in G\mid g-1\in \widehat{I_G}\!\ ^n\}$ for all $n\geq1$.
\end{prop}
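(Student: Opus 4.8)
The plan is to deduce this completed version of Jennings's theorem from its abstract counterpart, Proposition~\ref{prop:zassenhaus grFpG}, by exploiting the description $\FppG=\varprojlim_U\F_p[G/U]$, where $U$ ranges over the open normal subgroups of $G$. Writing $D_{(n)}=\{g\in G\mid g-1\in\widehat{I_G}^{n}\}$ for the ``completed dimension subgroup'', I would prove the two inclusions $G_{(n)}\subseteq D_{(n)}$ and $D_{(n)}\subseteq G_{(n)}$ separately, using the abstract result as the common engine and the inverse limit as the bridge to the finite quotients $G/U$, each of which is a finite $p$-group to which Proposition~\ref{prop:zassenhaus grFpG} applies directly.

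For $G_{(n)}\subseteq D_{(n)}$, I would use the canonical continuous $\F_p$-algebra homomorphism $\phi\colon\FpG\to\FppG$ induced by $G\hookrightarrow\FppG$. Since $\phi$ is compatible with the augmentation maps, it carries $I_G$ into $\widehat{I_G}$, and hence $I_G^{n}$ into $\widehat{I_G}^{n}$. Therefore, if $g$ lies in the abstractly defined Zassenhaus subgroup, Proposition~\ref{prop:zassenhaus grFpG} gives $g-1\in I_G^{n}$ in $\FpG$, whence $\phi(g-1)=g-1\in\widehat{I_G}^{n}$, i.e.\ $g\in D_{(n)}$. As the topological $G_{(n)}$ is the closure of the abstractly defined subgroup, and $D_{(n)}$ is the preimage of the closed ideal $\widehat{I_G}^{n}$ under the continuous map $g\mapsto g-1$, hence closed, passing to closures yields $G_{(n)}\subseteq D_{(n)}$.

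For the reverse inclusion, I would fix $g$ with $g-1\in\widehat{I_G}^{n}$ and project to each finite quotient via $\pi_U\colon\FppG\twoheadrightarrow\F_p[G/U]$. Because $\pi_U$ is a ring homomorphism sending $\widehat{I_G}$ onto $I_{G/U}$, one has $\pi_U(\widehat{I_G}^{n})\subseteq I_{G/U}^{n}$, so the image $\bar g\in G/U$ satisfies $\bar g-1\in I_{G/U}^{n}$. Applying Proposition~\ref{prop:zassenhaus grFpG} to the finite $p$-group $G/U$ gives $\bar g\in (G/U)_{(n)}$; since the Zassenhaus filtration is generated by $p$-th powers and commutators it is preserved by the surjection $\pi_U$, so $(G/U)_{(n)}=G_{(n)}U/U$ and therefore $g\in G_{(n)}U$. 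As this holds for every open normal $U$ and $G_{(n)}$ is a closed subgroup of the profinite group $G$, the standard identity $G_{(n)}=\bigcap_U G_{(n)}U$ forces $g\in G_{(n)}$.

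The main obstacle is the topological input underlying the first inclusion, namely that $\widehat{I_G}^{n}$ is a closed ideal of $\FppG$ and, more precisely, that the augmentation-power filtration is compatible with the inverse limit in the sense that $\widehat{I_G}^{n}=\varprojlim_U I_{G/U}^{n}$. For finitely generated $G$ this is immediate, since then $\FppG/\widehat{I_G}^{n}$ is finite-dimensional and $\widehat{I_G}^{n}$ is even open; in general it rests on the pseudocompactness of $\FppG$, which guarantees that the $n$-fold product of the compact ideal $\widehat{I_G}$ remains closed and that the surjections $\pi_U\colon\widehat{I_G}^{n}\to I_{G/U}^{n}$ assemble, by a compactness (Mittag--Leffler) argument, into the asserted isomorphism with the inverse limit. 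I would invoke the standard theory of complete group algebras of pro-$p$ groups (cf.~\cite[Section~7.4]{koch}) for these facts rather than reproving them.
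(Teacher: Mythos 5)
Your proposal is correct, and it is worth noting that the paper itself gives no proof of this proposition at all: it simply states it with a reference to \cite[Section 7.4]{koch}. Your argument --- reducing to the discrete Jennings theorem (Proposition \ref{prop:zassenhaus grFpG}) applied to the finite quotients $G/U$ via the inverse-limit description $\FppG=\varprojlim_U\F_p[G/U]$ --- is precisely the standard derivation found in that reference, so in effect you have supplied the proof the paper outsources. Both directions check out: for $G_{(n)}\subseteq D_{(n)}$, the map $\F_p[G]\to\FppG$ carries $I_G^n$ into $\widehat{I_G}\!\ ^n$, and passing to closures is legitimate because the topological Zassenhaus subgroup is the closure of the abstract one (continuity of the commutator and $p$-power maps) while $D_{(n)}$ is closed once $\widehat{I_G}\!\ ^n$ is; for the converse, your use of $(G/U)_{(n)}=G_{(n)}U/U$ is justified exactly as you say, since the Zassenhaus filtration is defined by verbal generation and is therefore mapped \emph{onto} by surjections, and the final step $\bigcap_U G_{(n)}U=G_{(n)}$ uses that $G_{(n)}$ is closed, which the paper's Convention guarantees.

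You also correctly isolate the one genuinely delicate point, namely the closedness of the abstract $n$-fold product $\widehat{I_G}\!\ ^n$ (the paper's Definition \ref{defi:FpG} does take the abstract power). One small refinement to your remark: for finitely generated $G$ the cleanest argument is not finite codimension per se (a finite-index subgroup of a profinite abelian group need not be open unless it is closed), but rather that $\widehat{I_G}\!\ ^n$ is the \emph{finite} sum of the compact subgroups $\FppG(x_{i_1}-1)\cdots(x_{i_n}-1)$, where $x_1,\dots,x_d$ topologically generate $G$; being a finite sum of continuous images of the compact $\FppG$, it is compact, hence closed, and then of finite codimension, hence open. For general pro-$p$ groups deferring to the pseudocompactness machinery of \cite[Section 7.4]{koch}, as you do, is appropriate --- especially since the paper makes exactly the same deferral.
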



\subsection{Restricted Lie algebras}
\label{ssec:restricted}

The following notion was introduced by N.~Jacobson (cf.~\cite[\S~V.5]{jacobson}).

\begin{defn}\label{defi:restricted}
A \emph{restricted Lie algebra} $L$ over $\F_p$ is a Lie $\F_p$-algebra equipped with
an additional unary operation $\slot^{[p]}\colon L\to L$, called $p$-operation, satisfying
\begin{itemize}
 \item[(i)] $(\alpha a)^{[p]}=\alpha^pa^{[p]}$ for all $\alpha\in\F_p, a\in L$;
 \item[(ii)] $(a+b)^{[p]}=a^{[p]} + \sum_{i=1}^{p-1} i^{-1}s_i(a,b) + b^{[p]}$ for all $a,b\in L$, where $s_i(a,b)$ is the coefficient of $T^{i-1}$ in $\mathrm{ad}(Ta+b)^{p-1}(a)$;
 \item[(iii)] $\mathrm{ad}(a^{[p]})=\mathrm{ad}(a)^p$ for all $a\in L$.
\end{itemize}
We recall that the map $\mathrm{ad}:L\to \mathrm{End}(L)$ is defined as $\mathrm{ad}(a)=\mathrm{ad}_a:b\mapsto [a,b]$.

A morphism of restricted Lie algebras is a morphism of Lie algebras that preserves the $p$-operations.
\end{defn}

For any associative $\F_p$-algebra $A$, the bracket operation $[a,b] := ab-ba$ and the $p$-operation $a^{[p]} := a^p$ make the $\F_p$-module underlying $A$ into a restricted Lie algebra $A_L$. This defines a functor from associative algebras to restricted Lie algebras, which has a left adjoint functor $\sU$. For a restricted Lie algebra $L$, $\sU(L)$ is called the \emph{universal restricted enveloping algebra} of $L$. Concretely, $\sU(L)$ is the quotient of the tensor algebra $T(L)$ by the two-sided ideal generated by the elements $ab-ba-[a,b]$ and $a^{[p]}-a^p$, $a,b\in L$. It comes equipped with an injective function, the \emph{universal embedding} $\vartheta:L\to\sU(L)$, that is a monomorphism of restricted Lie algebras from $L$ to $\sU(L)_L$ (cf.~\cite[\S~V.5]{jacobson}). This makes precise the idea that the $p$-operation is the Lie analogue of the $p$\textsuperscript{th} power map in the associative framework.

A \emph{restricted Lie subalgebra} $M$, respectively a \emph{restricted ideal} $\mathfrak{r}$, of a restricted Lie $\F_p$-algebra $L$ is a Lie subalgebra, respectively an ideal, of $L$ as Lie algebra with the further condition that $a^{[p]}\in M$ for every $a\in M$, respectively $a^{[p]}\in\mathfrak{r}$ for every $a\in\mathfrak{r}$.
As remarked in \cite[Proposition~2.1]{jochen:massey}, the well-known relationship between quotients of Lie algebras and universal enveloping algebras (cf.~\cite[\S I.2.3, Proposition~3]{bourbaki:lie}) remains true in the restricted framework:
\begin{prop}\label{prop:presentation U}
Let $L$ be a restricted Lie algebra over $\F_p$ and let $\mathfrak{r}\subseteq L$ be a restricted ideal.
Let $\mathcal{R}$ be the left ideal of $\sU(L)$ generated by the image of $\mathfrak{r}$
via the universal embedding $\vartheta\colon L\to\sU(L)$.
Then $\mathcal{R}$ is a two-sided ideal and the epimorphism $L\to L/\mathfrak{r}$ induces a short exact sequence
\begin{equation}\label{eq:ses U}
 \xymatrix{ 0\ar[r] & \mathcal{R}\ar[r] & \sU(L)\ar[r] & \sU(L/\mathfrak{r})\ar[r] & 0. }
\end{equation}
\end{prop}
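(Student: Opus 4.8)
The plan is to adapt the classical argument for ordinary enveloping algebras (\cite[\S I.2.3, Prop.~3]{bourbaki:lie}), carrying the $p$-operation along at every step.

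First I would check that $\mathcal{R}$ is two-sided. As $\sU(L)$ is generated as an algebra by $1$ and the image $\vartheta(L)$, it is enough to verify $\mathcal{R}\cdot\vartheta(x)\subseteq\mathcal{R}$ for each $x\in L$. Writing a generic element of $\mathcal{R}$ as a finite sum $\sum_i u_i\vartheta(a_i)$ with $u_i\in\sU(L)$ and $a_i\in\mathfrak{r}$, one has $u_i\vartheta(a_i)\vartheta(x)=u_i\vartheta(x)\vartheta(a_i)+u_i[\vartheta(a_i),\vartheta(x)]$. Since $\vartheta$ is a morphism of (restricted) Lie algebras, $[\vartheta(a_i),\vartheta(x)]=\vartheta([a_i,x])$, and since $\mathfrak{r}$ is an ideal, $[a_i,x]\in\mathfrak{r}$; hence both summands lie in $\mathcal{R}$.

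Next, the quotient map $\pi\colon L\to L/\mathfrak{r}$ is a morphism of restricted Lie algebras — this is where the hypothesis that $\mathfrak{r}$ be a \emph{restricted} ideal enters, so that $L/\mathfrak{r}$ inherits a $p$-operation — and by functoriality of $\sU$ it induces an algebra homomorphism $\sU(\pi)\colon\sU(L)\to\sU(L/\mathfrak{r})$ sending $\vartheta(x)$ to $\vartheta(\pi(x))$. Its image contains all the generators $\vartheta(\bar{x})$, so it is surjective; and it annihilates $\vartheta(\mathfrak{r})$, hence the ideal $\mathcal{R}$. Thus $\sU(\pi)$ descends to a surjection $\psi\colon\sU(L)/\mathcal{R}\twoheadrightarrow\sU(L/\mathfrak{r})$, and the whole statement reduces to showing $\psi$ is injective.

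To invert $\psi$ I would run the universal property the other way. The composite $L\xrightarrow{\vartheta}\sU(L)\to\sU(L)/\mathcal{R}$ is a restricted Lie homomorphism vanishing on $\mathfrak{r}$, so it factors through a restricted Lie homomorphism $\bar{\phi}\colon L/\mathfrak{r}\to(\sU(L)/\mathcal{R})_L$; checking that $\bar{\phi}$ respects the $p$-operation uses the identity $\vartheta(x)^p=\vartheta(x^{[p]})$ valid in $\sU(L)$. The universal property of $\sU(L/\mathfrak{r})$ extends $\bar{\phi}$ to an algebra homomorphism $\Phi\colon\sU(L/\mathfrak{r})\to\sU(L)/\mathcal{R}$, and evaluating on the generators $\vartheta(\bar{x})$ and $\vartheta(x)+\mathcal{R}$ shows that $\psi\circ\Phi$ and $\Phi\circ\psi$ are the respective identities. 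This gives the isomorphism $\sU(L)/\mathcal{R}\cong\sU(L/\mathfrak{r})$ and hence the exact sequence \eqref{eq:ses U}. The one genuinely delicate point, as opposed to the purely formal Lie-theoretic manipulations, is the $p$-operation bookkeeping: every factorization and every use of the universal property must be performed in the category of \emph{restricted} Lie algebras, which is precisely what having $\mathfrak{r}$ a restricted ideal secures.
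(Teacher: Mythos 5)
Your proof is correct, and it coincides with what the paper intends: the paper does not prove this proposition at all, but simply cites \cite[Proposition~2.1]{jochen:massey} and the classical ungraded statement \cite[\S I.2.3, Prop.~3]{bourbaki:lie}, and your argument is precisely the standard Bourbaki-style proof with the $p$-operation carried through. The only step you leave implicit is the well-definedness of the quotient $p$-operation on $L/\mathfrak{r}$, which follows from axiom (ii) of Definition \ref{defi:restricted}: for $x\in L$ and $r\in\mathfrak{r}$ one has $(x+r)^{[p]}=x^{[p]}+r^{[p]}+\sum_i i^{-1}s_i(x,r)$, where $r^{[p]}\in\mathfrak{r}$ because $\mathfrak{r}$ is restricted and each $s_i(x,r)$ is a sum of iterated brackets each involving $r$, hence lies in $\mathfrak{r}$ because $\mathfrak{r}$ is an ideal.
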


The \emph{free product} of two restricted Lie $\F_p$-algebras $L_1$ and $L_2$ is the coproduct of $L_1$ and $L_2$ in the category of restricted Lie $F_p$-algebras. That is, $L_1\ast L_2$ contains (isomorphic images of) $L_1$ and $L_2$ as restricted Lie subalgebras, and for any couple of morphisms of restricted Lie algebras, $\psi_1\colon L_1\to H$ and $\psi_2\colon L_2\to H$, there exists a unique morphism $\psi \colon L_1\ast L_2\to H$ such that $\psi_i=\psi\vert_{L_i}$ for $i=1,2$ (cf.~\cite[Remark 1]{lichtman:lie}). For the existence of the free product see \cite[Lemma 2]{lichtman:lie}.

The \emph{direct sum} of two restricted Lie $\F_p$-algebras $L_1$ and $L_2$ is the quotient of $L_1\ast L_2$
by the restricted ideal generated by the Lie brackets $[a,b]$ with $a\in L_1$ and $b\in L_2$.
\begin{rem}\label{rem:construction general algebras}
The free product and the symmetric tensor product constructions, recalled in Example \ref{ex:quadratic2} for quadratic algebras, make sense for general associative algebras. If $A$ and $B$ are algebras over a commutative ring $R$, the free product $A\sqcup B$ is the coproduct of $A$ and $B$ in the category of (associative, unital) $R$-algebras; an explicit description can be found in \cite[\S~1.4]{bemami}. The symmetric tensor product $A\otimes^1 B$ is the quotient of $A\sqcup B$ by the ideal generated by the elements $ab - ba$ with $a\in A$ and $b\in B$.
\end{rem}
\begin{prop}\label{prop:U prod sum}
 Let $L_1$ and $L_2$ be two restricted Lie algebras over $\F_p$.
\begin{itemize}
 \item[(i)] The universal restricted enveloping algebra of the free product $L_1\ast L_2$ is 
the free product $\sU(L_1)\sqcup\sU(L_2)$.
 \item[(ii)] The universal restricted enveloping algebra of the direct sum $L_1\oplus L_2$ is the symmetric tensor product $\sU(L_1)\otimes^1\sU(L_2)$.
\end{itemize}
\end{prop}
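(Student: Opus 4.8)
The plan is to exploit that $\sU$ is a left adjoint and therefore commutes with all colimits; the two parts then differ only in how I present the construction on the Lie side. Part (i) is a direct coproduct-preservation statement, while part (ii) treats the direct sum as a quotient of the free product and feeds it through Proposition \ref{prop:presentation U}.

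For part (i), I would argue purely categorically. By definition the free product $L_1\ast L_2$ is the coproduct of $L_1$ and $L_2$ in the category of restricted Lie $\F_p$-algebras. Dually, the free product $\sU(L_1)\sqcup\sU(L_2)$ is the coproduct of $\sU(L_1)$ and $\sU(L_2)$ in the category of associative unital $\F_p$-algebras: presenting each factor as $T_\bullet(V_i)/(\Omega_i)$, one checks that $T_\bullet(V_1\oplus V_2)/(\Omega_1\oplus\Omega_2)$ receives the two canonical maps and satisfies the universal property of the coproduct, which recovers Example \ref{ex:quadratic2}(b) in the quadratic case. Since $\sU$ is left adjoint to the functor $A\mapsto A_L$, it preserves coproducts, so $\sU(L_1\ast L_2)\cong\sU(L_1)\sqcup\sU(L_2)$.

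For part (ii), I would reduce to part (i) via Proposition \ref{prop:presentation U}. By construction the direct sum is the quotient $L_1\oplus L_2=(L_1\ast L_2)/\mathfrak{r}$, where $\mathfrak{r}$ is the restricted ideal generated by the cross-brackets $[a,b]$ with $a\in L_1$ and $b\in L_2$. Proposition \ref{prop:presentation U} then identifies $\sU(L_1\oplus L_2)$ with $\sU(L_1\ast L_2)/\mathcal{R}$, where $\mathcal{R}$ is the two-sided ideal generated by the image of $\mathfrak{r}$ under the universal embedding $\vartheta$. Combining this with part (i), the algebra in question is the quotient of $\sU(L_1)\sqcup\sU(L_2)$ by the two-sided ideal generated by the elements $\vartheta(a)\vartheta(b)-\vartheta(b)\vartheta(a)$ with $a\in L_1$ and $b\in L_2$. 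Because each $\sU(L_i)$ is generated as an algebra by $\vartheta(L_i)$, imposing these commutation relations on generators forces every element of $\sU(L_1)$ to commute with every element of $\sU(L_2)$; the resulting algebra is exactly the symmetric tensor product $\sU(L_1)\otimes^1\sU(L_2)$, matching Example \ref{ex:quadratic2}(c).

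The main obstacle I anticipate is bookkeeping rather than conceptual: one must verify carefully that the combinatorially defined operations $\sqcup$ and $\otimes^1$ of Example \ref{ex:quadratic2}, phrased there for quadratic algebras generated in degree $1$, agree with the categorical coproduct and the commutator quotient used above. The delicate point in part (ii) is to confirm that the two-sided ideal $\mathcal{R}$ produced by Proposition \ref{prop:presentation U} coincides with the ideal generated solely by the generator-level commutators $\vartheta(a)\vartheta(b)-\vartheta(b)\vartheta(a)$. This follows from the fact that $\vartheta(L_i)$ generates $\sU(L_i)$, so that the a priori larger ideal coming from $\mathfrak{r}$ contributes no new relations beyond those already enforced on the generators. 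Once these identifications are secured, both isomorphisms follow formally from the adjunction.
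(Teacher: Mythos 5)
Your proposal is correct and takes essentially the same route as the paper: for (i), the paper's hands-on diagram chase is exactly a verification that the left adjoint $\sU$ preserves coproducts (the paper additionally checks injectivity of its candidate embedding $\vartheta$, which your categorical argument gets for free since the universal embedding of $\sU(L_1\ast L_2)$ is injective by the restricted PBW theorem built into the definition of $\sU$); for (ii), the paper likewise combines part (i) with Proposition~\ref{prop:presentation U} applied to $L_1\oplus L_2=(L_1\ast L_2)/\mathfrak{r}$. Your extra verifications---that $\mathcal{R}$ is already generated by the generator-level commutators $\vartheta(a)\vartheta(b)-\vartheta(b)\vartheta(a)$ because $\vartheta$ is a morphism of restricted Lie algebras and $\vartheta(L_i)$ generates $\sU(L_i)$, and that the resulting quotient is $\sU(L_1)\otimes^1\sU(L_2)$---merely fill in details the paper's one-line proof of (ii) leaves implicit.
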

\begin{proof}
\begin{itemize}
\item[(i)] This is asserted in \cite[Lemma 2]{lichtman:lie}, but there the proof is left to the reader. For the sake of completeness, we provide a proof.

First, we define a candidate universal embedding
\[\vartheta:L_1\ast L_2\to\sU(L_1)\sqcup\sU(L_2).\]
Define the inclusion maps $\eta_1:\sU(L_1)\to\sU(L_1)\sqcup\sU(L_2)$, $\eta_2:\sU(L_1)\to\sU(L_1)\sqcup\sU(L_2)$
and $\iota_1:L_1\to L_1\ast L_2$, $\iota_2:L_2\to L_1\ast L_2$.
There are universal embeddings $\vartheta_1:L_1\to\sU(L_1)$,
$\vartheta_2:L_2\to\sU(L_2)$ which, composed with $\eta_1,\eta_2$, give two morphisms of Lie algebras
$L_1\rightarrow (\sU(L_1)\sqcup\sU(L_2))_L \leftarrow L_2$. 
The universal property of $L_1\ast L_2$
as a coproduct gives a unique restricted Lie algebra morphism $\vartheta:L_1\ast L_2\to(\sU(L_1)\sqcup\sU(L_2))_L$
with the property that $\vartheta\iota_1=\eta_1\vartheta_1,\;\vartheta\iota_2=\eta_2\vartheta_2$.
\[
\xymatrix@R-8pt@C-8pt{
L_1\ar^{\iota_1}[rr]\ar^{\vartheta_1}[rd] && L_1\ast L_2\ar@{-->}_{\exists!}^{\vartheta}[dd] && L_2\ar_{\iota_2}[ll]\ar_{\vartheta_2}[ld] \\
& \sU(L_1)_L\ar^{(\eta_1)_L=\eta_1}[dr] && \sU(L_2)_L\ar_{(\eta_2)_L=\eta_2}[dl] \\
&& (\sU(L_1)\sqcup\sU(L_2))_L &&
}
\]

Then we prove that for any associative algebra $M$ and any restricted Lie morphism $\varphi:L_1\ast L_2\to M$ there is a unique algebra morphism $\alpha:\sU(L_1)\sqcup\sU(L_2)\to M$ such that $\alpha\vartheta=\varphi$.
Let $\varphi_1=\varphi\iota_1,\;\varphi_2=\varphi\iota_2$. By the universal property of universal restricted enveloping algebras, there are unique algebra morphisms $\alpha_1,\alpha_2$ such that $\alpha_1\vartheta_1=\varphi_1,\;\alpha_2\vartheta_2=\varphi_2$.
Since $\sU(L_1)\sqcup\sU(L_2)$, together with $\eta_1,\eta_2$, is a coproduct of unital, associative $\F_p$-algebras, there is a unique algebra morphism $\alpha:\sU(L_1)\sqcup\sU(L_2)\to M$ satisfying $\alpha\eta_1=\alpha_1,\; \alpha\eta_2=\alpha_2$.
\[
\xymatrix{
&& L_1\ast L_2\ar^{\vartheta}[d] && \\
L_1\ar^{\iota_1}[urr]\ar_{\vartheta_1}[r]\ar_{\varphi_1}[ddrr] & \sU(L_1)\ar_-{\eta_1}[r]\ar^{\alpha_1}[rdd] & \sU(L_1)\sqcup\sU(L_2)\ar^{\alpha}[dd] & \sU(L_2)\ar^-{\eta_2}[l]\ar_{\alpha_2}[ldd] & L_2\ar_{\iota_2}[ull]\ar^{\vartheta_2}[l]\ar^{\varphi_2}[ddll] \\
&&&&\\
&& M &&
}
\]
Since $(\alpha\vartheta)\iota_j=\alpha\eta_j\vartheta_j=\alpha_j\vartheta_j=\varphi_j=\varphi\iota_j$ for $j=1,2$, by the universal property of $L_1\ast L_2$ we have $\alpha\vartheta=\varphi$. If also $\beta:\sU(L_1)\sqcup\sU(L_2)\to M$ is an algebra morphism satisfying $\beta\vartheta=\varphi$, then for $j=1,2$
\[
\begin{array}{lcr}
& \beta\eta_j\vartheta_j=\beta\vartheta\iota_j=\varphi\iota_j=\varphi_j & \\
\Rightarrow & \beta\eta_j=\alpha_j & \mbox{(uniqueness of $\alpha_j$)} \\
\Rightarrow & \beta=\alpha & \mbox{(uniqueness of $\alpha$)}.
\end{array}
\]
In particular, taking $M=\sU(L_1\ast L_2)$ and $\varphi$ the universal embedding, we see that $\vartheta$ is injective, as required in the definition of the universal restricted enveloping algebra.
Summing up, $\sU(L_1\ast L_2)\cong\sU(L_1)\sqcup\sU(L_2)$.
\item[(ii)] By the definition of direct sum, Proposition~\ref{prop:presentation U} and (i) imply that \[\sU(L_1\oplus L_2)\cong\frac{\sU(L_1)\sqcup\sU(L_2)}{\mathcal{R}},\] with $\mathcal{R}$ the two-sided ideal generated by the commutators $[\vartheta(v_1),\vartheta(v_2)]=\vartheta(v_1)\vartheta(v_2)-\vartheta(v_2)\vartheta(v_1)$.
\end{itemize}
\end{proof}

For a group $G$, let $L(G)$ be the graded object \[L(G)=\bigoplus_{n\geq1}G_{(n)}/G_{(n+1)}.\]
The group commutator and the $p$-power of $G$ induce the structure of graded restricted Lie algebra on $L(G)$.

The two versions of Jennings's Theorem (Propositions \ref{prop:zassenhaus grFpG} and \ref{prop:complete zassenhaus grFpG})
can be reformulated and completed in terms of $L(G)$ (cf. \cite{quillen} and \cite[Theorem~12.8]{ddsms:analytic}).

\begin{thm}\label{thm:LG grFpG}
Let $G$ be a group, and for an element $g\in G_{(n)}$ set $\overline g=gG_{(n+1)}\in G_{(n)}/G_{(n+1)}$.
Then the assignment
 \[
  \vartheta(\overline g)=(g-1)+I_G^{n+1}\in I_G^n/I_G^{n+1}
 \]
induces a monomorphism of restricted Lie algebras $\vartheta\colon L(G)\to(\gr\FpG)_L$,
such that $\gr\FpG$ endowed with $\vartheta$ is the universal restricted enveloping algebra of $L(G)$.

If $G$ is a pro-$p$ group, the same is true with $\FppG$ and $\widehat{I_G}$ in place of $\FpG$ and $I_G$.
\end{thm}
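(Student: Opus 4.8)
The plan is to recognise this statement as the restricted-Lie-algebra reformulation of Jennings's theorem (Proposition~\ref{prop:zassenhaus grFpG}), in the spirit of Quillen~\cite{quillen}. First I would check that $\vartheta$ is a well-defined, injective morphism of graded restricted Lie algebras; then, invoking the universal property of the restricted enveloping algebra, I would produce a graded algebra homomorphism $\Phi\colon\sU(L(G))\to\gr\FpG$ and prove that it is an isomorphism. Throughout, the only nontrivial input about the filtration is Proposition~\ref{prop:zassenhaus grFpG}, i.e.\ the equality $G_{(n)}=\{g\in G\mid g-1\in I_G^n\}$.

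Well-definedness and injectivity are then immediate: Proposition~\ref{prop:zassenhaus grFpG} says exactly that $g\mapsto(g-1)+I_G^{n+1}$ carries $G_{(n)}$ into $I_G^n/I_G^{n+1}$ with kernel precisely $G_{(n+1)}$, so each graded component of $\vartheta$ is an injection $G_{(n)}/G_{(n+1)}\hookrightarrow I_G^n/I_G^{n+1}$. For additivity I would use $gh-1=(g-1)+(h-1)+(g-1)(h-1)$ together with $(g-1)(h-1)\in I_G^{2n}\subseteq I_G^{n+1}$. The bracket is handled by the commutator identity $gh=hg\,[g,h]$: writing $a=g-1\in I_G^m$, $b=h-1\in I_G^n$, one has $ab-ba=(1+a)(1+b)-(1+b)(1+a)=hg\,([g,h]-1)$, and since $[g,h]-1\in I_G^{m+n}$ while $hg-1\in I_G$, this reduces modulo $I_G^{m+n+1}$ to $[g,h]-1$; hence $\vartheta$ intertwines the Lie brackets with the commutator in $(\gr\FpG)_L$. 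The $p$-operation is even cleaner: inside the commutative subalgebra $\F_p[\langle g\rangle]$ the Frobenius is additive, so $(g-1)^p=g^p-1$ holds exactly in $\FpG$, which is precisely $\vartheta(\overline g)^p=\vartheta(\overline{g^p})$, compatibly with $G_{(n)}^p\subseteq G_{(pn)}$. Thus $\vartheta$ is a monomorphism of graded restricted Lie algebras.

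For the universal property, the defining property of $\sU$ applied to the restricted Lie morphism $\vartheta$ produces a graded algebra map $\Phi\colon\sU(L(G))\to\gr\FpG$ through which $\vartheta$ factors. Surjectivity is formal: $\gr\FpG$ is generated in degree $1$ because $I_G^n=(I_G)^n$, and $\vartheta$ is already onto in degree $1$, since $I_G$ is spanned by the elements $g-1$ and so $I_G/I_G^2\cong G/G_{(2)}=L(G)_1$. The crux, and the main obstacle, is the injectivity of $\Phi$. Here I would invoke the restricted Poincar\'e--Birkhoff--Witt theorem (cf.~\cite[Section V.5]{jacobson}): fixing a homogeneous $\F_p$-basis of $L(G)$ and a total order on it, $\sU(L(G))$ has as basis the ordered monomials $x_{i_1}^{e_1}\cdots x_{i_k}^{e_k}$ with $i_1<\dots<i_k$ and $1\le e_j\le p-1$. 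Injectivity of $\Phi$ is thus equivalent to the linear independence in $\gr\FpG$ of the images of these monomials, which is exactly the content of Jennings's basis theorem for the dimension series (\cite{jennings}, \cite[\S~12.2]{ddsms:analytic}); equivalently, when the graded pieces are finite-dimensional both algebras share the same Hilbert series (the Jennings product formula), so the surjection $\Phi$ is forced to be bijective. This identifies $\gr\FpG$ with $\sU(L(G))$ intertwining the two universal embeddings, which is the assertion.

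Finally, the pro-$p$ statement follows verbatim, replacing $\FpG$, $I_G$ and Proposition~\ref{prop:zassenhaus grFpG} by $\FppG$, $\widehat{I_G}$ and Proposition~\ref{prop:complete zassenhaus grFpG}; the completion affects none of the graded computations above. I expect the genuinely hard step to be the injectivity of $\Phi$, since the restricted PBW basis of $\sU(L(G))$ matches Jennings's basis of $\gr\FpG$ only through the deep combinatorial analysis of the dimension series, whereas all the remaining verifications are short, self-contained consequences of Proposition~\ref{prop:zassenhaus grFpG}.
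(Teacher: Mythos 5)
Your proposal is correct and takes essentially the same route as the paper, which offers no independent proof: it presents the theorem as the restricted-Lie reformulation of Propositions~\ref{prop:zassenhaus grFpG} and \ref{prop:complete zassenhaus grFpG}, delegating the substance to Jennings \cite{jennings}, \cite[\S~12.2]{ddsms:analytic} and Quillen \cite{quillen} --- precisely the sources you invoke for the injectivity of $\Phi$, which you rightly single out as the only deep step. Your routine verifications are all sound (well-definedness and injectivity of $\vartheta$ from Proposition~\ref{prop:zassenhaus grFpG}, additivity via $gh-1=(g-1)+(h-1)+(g-1)(h-1)$, the bracket via $gh-hg=hg([g,h]-1)$ with $hg-1\in I_G$, and $(g-1)^p=g^p-1$ by Frobenius on the commutative subalgebra), and your hedge that the Hilbert-series shortcut needs finite-dimensional graded pieces is appropriate, since the general case rests on matching the restricted PBW basis with Jennings's basis rather than on dimension counting.
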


We call $\overline g\in L(G)$ the \emph{initial form} of $g\in G$. If $\overline g\in G_{(n)}/G_{(n+1)}$, $\overline{g}\neq1$, we say that the inital form of $g$ has degree $n$.

\begin{rem}
Let $G$ be a \emph{profinite} group with finitely generated maximal pro-$p$ quotient $G(p)$.
Since the quotient of $G$ over any term of the $p$-Zassenhaus filtration is a finite $p$-group,
the restricted Lie algebras $L(G(p))$ and $L(G)$ are isomorphic,
and so are their respective restricted universal enveloping algebras $\sU(L(G(p)))$ and $\sU(L(G))$.
Therefore, Theorem~B also admits an extension to absolute Galois groups $G_F$ with maximal
pro-$p$ quotient $G_F(p)$ which is of elementary type.
\end{rem}

\subsection{Cohomology of pro-$p$ groups}\label{ssec:cohom of pro-p}
The natural cohomology theory to be used with profinite groups is Galois cohomology. It is defined in complete analogy with classical group cohomology, except for the requirement that all maps be continuous. Good accounts on Galois cohomology can be found for example in \cite{nsw:cohm}, \cite{serre}. We content ourselves to quickly recall some useful facts, with a focus on finitely generated pro-$p$ groups, although the results are essentially valid in a broader context.

If $G$ is a pro-$p$ group, its Frattini subgroup is $\Phi(G)=G^p[G,G]$ and the quotient $G/G_{(2)}=G/\Phi(G)$ is a compact elementary abelian $p$-group. Its Pontryagin dual is $H^1(G,\F_p)$ and the cardinality $\dd(G)$ of a minimal generating set of $G$ equals $\dim_{\F_p}H^1(G,\F_p)$ (cf.~\cite[Ch.~I, \S~4.2]{serre}). Thus, if $G$ is finitely generated, $G/\Phi(G)\cong (\Z/p\Z)^{\dd(G)}$. Moreover, if $G$ is a finitely generated pro-$p$ group, and $\{x_1,\dots,x_d\}$ is a minimal generating set of $G$, then the minimal number of defining relations between the $x_i$ is equal to $\dim_{\F_p}H^2(G,\F_p)$ (cf.~\cite[Ch.~I,\S~4.3]{serre}).

For a profinite group $G$, the space $H^\bullet(G,\F_p)=\bigoplus_{n\geq0}H^n(G, \F_p)$ is a connected graded $\F_p$-algebra with respect to the cup product
\begin{equation}\label{eq:cup product}
 \cup\colon H^r(G,\F_p)\times H^s(G,\F_p)\longrightarrow H^{r+s}(G,\F_p),\qquad r,s\geq0
\end{equation}
(cf.~\cite[Proposition~1.4.4]{nsw:cohm}).
The cup product is graded-commutative, that is
\[
a\cup b=(-1)^{rs}b\cup a \qquad\mbox{for }a\in H^r(G,\F_p),b\in H^s(G,\F_p).
\]

This is of special interest when $G$ is a Galois group.
If $\K$ contains a primitive $p$\textsuperscript{th} root of unity, the Kummer map
$\kappa:\K^\times\to H^1(G_\K,\F_p)$ induces an isomorphism $\K^\times/(\K^\times)^p\to H^1(G_\K,\F_p)$,
thereby giving a concrete description of the first cohomology group of $G_\K$.
The possibility to extend this to higher cohomology groups was first suggested by J.~Milnor for $p=2$
(cf.~\cite{milnor}) and then conjectured by S.~Bloch and K.~Kato for any prime (cf.~\cite{BK}).
In detail, the \emph{Milnor K-theory} of $\K$ is the quotient $K^M_\bullet(\K)$ of the tensor algebra (see \S 1.1)
of the multiplicative group $\K^\times$ by the two-sided ideal generated by the tensors $a\otimes(1-a)$,
$a\in\K\setminus\{0,1\}$. The tensor powers $\kappa\otimes\dots\otimes\kappa$ factor through the aforementioned ideal
(Steinberg relations, cf.~\cite[Proposition 4.6.1]{gilsza} or \cite[proof of Theorem 3.1]{tate:rel}) and $H^\bullet(G_\K,\F_p)$ is a $p$-torsion group.
Hence for all $n\geq 1$ there is a well-defined \emph{norm residue map} 
\[
K^M_n(\K)/pK^M_n(\K)\to H^n(G_\K,\F_p).
\]
The Bloch-Kato conjecture claims that all these maps are group isomorphisms.
Relatively recently, M.~Rost and V.~Voevodsky completed the proof of the full Bloch-Kato conjecture, obtaining a very good insight into the structure of Galois cohomology (cf.~\cite{suslin:norm, haeseweibel, voevodsky:motivic, voevodsky:BK}).
A relevant consequence of their achievement is that, if $\K$ contains a primitive $p$\textsuperscript{th} root of unity, then the algebra $H^\bullet(G_{\K},\F_p)$ 
is quadratic.
From this and the graded-commutativity of the cup product, it follows that there is an epimorphism of quadratic $\F_p$-algebras $\Lambda_\bullet(H^1(G,\F_p))\to H^\bullet(G,\F_p)$. In particular, for $p=2$ this is the same as an epimorphism of quadratic algebras $\mathcal{S}_\bullet(H^1(G,\F_p))\to H^\bullet(G,\F_p)$.

\section{Elementary type pro-$p$ groups}\label{sec:ET}
Few groups are known to be realizable as maximal pro-$p$ quotients of absolute Galois groups of fields satisfying Hypothesis \ref{ass} (in short: \emph{realizable}). At present, all the finitely generated groups that are known to be realizable fall into a restricted class.
This class was introduced by I.~Efrat (cf. \cite[\S~3]{efrat:small}), and it
is defined inductively, using two basic types of groups and two operations.
In this section we recall the definitions of these basic groups and operations, starting from a preliminary concept. 

\begin{defn}\label{defn:cyclochar}
We denote the multiplicative group of units of $\Z_p$ by $U_p$.
A \emph{cyclotomic pair} is a couple $(G,\chi)$ made of a pro-$p$ group $G$ and a continuous homomorphism $\chi\colon G\to U_p$ (see \cite{efrat:cycpair}, and \cite{jacwar}, \cite{jacwar2} for the case $p=2$). We call $\chi$ an \emph{orientation} of $G$
(following the notation used in \cite{cq:bk,cmq:fast,qw:cyc}).
\end{defn}

\begin{rem}\label{rem:character pro-p image}
Note that $U_p$ decomposes as the direct product of a pro-$p$ part $U_p^{(1)}$ and a cyclic group of order $p-1$.
Moreover, if $p=2$ then $U_2=U_2^{(1)}\simeq\Z/2\Z\oplus\Z_2$.
The image of a continuous homomorphism from a pro-$p$ group to $U_p$ is included in $U_p^{(1)}$ (cf.~\cite[Ch.~I, \S~4.5]{serre}).
\end{rem}

This terminology is justified in view of the situation of a field $\K$ satisfying Hypothesis \ref{ass}.
The action of $G_{\K}(p)$ on the group $\mu_{p^\infty}(\K_{\mathrm{sep}})$ of the roots of unity of order a power of $p$ lying in a separable closure $\K_{\mathrm{sep}}$ induces a homomorphism $\chi_p\colon G\to U_p$. This homomorphism is known in the literature as the ($p$-adic) cyclotomic character.
\begin{defn}
A cyclotomic pair $(G,\chi)$ is \emph{realizable} if there exists a field $\K$ satisfying Hypothesis \ref{ass}
for which $G=G_{\K}(p)$ and $\chi=\chi_p$.
\end{defn}

We extend the terminology relative to group properties to cyclotomic pairs, so that for example a finitely generated cyclotomic pair is a cyclotomic pair $(G,\chi)$ such that $G$ is finitely generated as a pro-$p$ group.

\subsection{Basic groups}\label{ssec:basic groups}

{ Recall that} a finitely generated pro-$p$ group is \emph{free} on a set $I=\{x_1,\dots,x_d\}$ if it is the inverse limit of the groups $L(I)/M$, where $L(I)$ is the discrete free group on $I$ and $M$ is a normal subgroup of $L(I)$ with index a (finite) power of $p$ (cf., e.g., \cite[\S~3.3]{rz:prof}).
Free pro-$p$ groups satisfy the usual universal property of free objects (cf.~\cite[Ch.~I, \S~1.5]{serre}, where not necessarily finitely generated free pro-$p$ groups are introduced).

\begin{defn}
A pro-$p$ group is called a \emph{Demushkin} group if $\dim_{\F_p} H^1(G,\F_p)$ is finite, $H^2(G,\F_p)$ is isomorphic to $\F_p$,
and the cup product induces a non-degenerate (skew-symmetric) pairing \[H^1(G,\F_p)\times H^1(G,\F_p)\stackrel{\cup}{\longrightarrow}H^2(G,\F_p)\cong \F_p.\]
\end{defn}
By the previous discussion, Demushkin groups are finitely generated.
The only finite Demushkin group is the group of order $2$ (cf.~\cite[Proposition~3.9.10]{nsw:cohm}).
It has a unique orientation with image $\{-1,+1\}$.
Infinite Demushkin groups are precisely the Poincar\'e duality groups of dimension $2$ (cf.~\cite[Ch.~I, \S~4.5]{serre}).
Infinite Demushkin pro-$p$ groups are classified by two invariants. The first is the minimal number of generators. As regards the second invariant,
each infinite Demushkin group $G$ has a canonical orientation $\chi_G$,
with the additional properties 1, 2 and 3 in \cite[Proposition 6]{labute:demushkin}
(cf.~ also \cite{serre:demushkin}\footnote{This is a reprint of the original paper contained in \cite{bourbaki}}).
We then define the second invariant $q=q(G)$ to be $2$ if $\mathrm{Im}(\chi_G)=\{-1,+1\}$, and to be the maximal power $p^k$ such that $\mathrm{Im}(\chi_G)\subseteq 1+p^k\Z_p$ otherwise. 

Using these two invariants, J.~Labute completed the classification of Demushkin groups started by S.~Demushkin and extended by  J.-P.~Serre (cf.~\cite{labute:demushkin}, \cite[\S~III.9]{nsw:cohm}, \cite{serre:demushkin}), summarized in the following.

\begin{thm}[Demushkin, Serre, Labute]\label{thm:presentation demushkin}
A finitely generated pro-$p$ group is an infinite Demushkin group if and only if it can be presented by a minimal set of generators $\{x_1,\ldots,x_d\}$, subject to one relator $r$ that has either of the following forms.
\begin{itemize}
 \item[(i)] If $q\neq2$ (possibly $q=0$), then \begin{equation}\label{eq:demu1}r=x_1^{q}[x_1,x_2][x_3,x_4]\cdots[x_{d-1},x_d],\end{equation} with $d\geq 2$ necessarily even.
 \item[(ii)] If $q=2$ and $d\geq 3$ is odd, then \begin{equation}\label{eq:demu2}r=x_1^2x_2^{2^f}[x_2,x_3][x_4,x_5]\ldots[x_{d-1},x_d]\end{equation} with $f\in\{2,3,\ldots\}\cup\{\infty\}$.
 \item[(iii)] If $q=2$, $d\geq 2$ is even and $[\mathrm{Im}(\chi_G) : (\mathrm{Im}(\chi_G))^2]=2$, then \begin{equation}\label{eq:demu3}r=x_1^{2+2^f}[x_1,x_2][x_3,x_4]\cdots[x_{d-1},x_d]\end{equation} with $f\in\{2,3,\ldots\}\cup\{\infty\}$.
 \item[(iv)] If $q=2$, $d\geq 4$ is even and $[\mathrm{Im}(\chi_G) : (\mathrm{Im}(\chi_G))^2]=4$, then \begin{equation}\label{eq:demu4}r=x_1^2[x_1,x_2]x_3^{2^f}[x_3,x_4]\cdots[x_{d-1},x_d]\end{equation} with $f\in\{2,3,\ldots\}$.
\end{itemize}
Here by convention $p^\infty=0$.

Conversely, such presentations define an infinite Demushkin group for each value of the involved parameters.
The last three cases describe pro-$2$ groups.
\end{thm}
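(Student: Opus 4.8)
The plan is to separate the statement into its two directions---that every infinite Demushkin group carries one of the presentations (i)--(iv), and that each such presentation defines an infinite Demushkin group---and to organize both around the single principle that for a one-relator pro-$p$ group the cup product on $H^1$ is read off from the degree-two part of the relator. First I would extract the presentation. Since $\dim_{\F_p}H^2(G,\F_p)=1$, the minimal-relation count recalled in Subsection~\ref{ssec:cohom of pro-p} gives a minimal presentation $G=\langle x_1,\ldots,x_d\mid r\rangle$ with $d=\dim_{\F_p}H^1(G,\F_p)$ and a \emph{single} relator $r$; minimality forces $r\in\Phi(F)=F_{(2)}$, where $F$ is the free pro-$p$ group on $x_1,\ldots,x_d$. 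The task is then entirely a normal-form problem for this one relator.

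Next I would set up the dictionary between cohomology and the relator. Let $\chi_1,\ldots,\chi_d\in H^1(G,\F_p)$ be the basis dual to $x_1,\ldots,x_d$, and expand $r$ through its initial form in $\gr\F_p\dbl F\dbr\cong\F_p\langle X_1,\ldots,X_d\rangle$ (Theorem~\ref{thm:LG grFpG}, with $X_i=x_i-1$). Because $r\in F_{(2)}$ its linear part vanishes, and the degree-two coefficients of $r$ encode the bilinear map $\chi_i\otimes\chi_j\mapsto\chi_i\cup\chi_j\in H^2(G,\F_p)\cong\F_p$. Under this identification, the Demushkin hypothesis that the cup product be non-degenerate says precisely that the degree-two form attached to $r$ is non-degenerate.

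The classification then becomes a normal-form computation modulo higher Zassenhaus layers. For $p$ odd, graded-commutativity gives $\chi_i\cup\chi_i=-\chi_i\cup\chi_i$, hence $\chi_i\cup\chi_i=0$, so the cup form is alternating; a non-degenerate alternating form forces $d$ even, and after a symplectic change of generating set its commutator part becomes $[x_1,x_2]\cdots[x_{d-1},x_d]$, while the surviving $p$-power contribution can be concentrated in a single factor $x_1^{q}$ with $q$ a power of $p$ or $0$, yielding case (i). For $p=2$ the form need not be alternating: the diagonal terms $\chi_i\cup\chi_i$ (equivalently the $x_i^2$ contributions) survive, so one normalizes a non-degenerate \emph{quadratic} form over $\F_2$. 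Its Arf-type invariant, together with the parity of $d$ and the index $[\im\chi_G:(\im\chi_G)^2]\in\{2,4\}$ read off from the cyclotomic character, separate the three cases (ii)--(iv), with the parameter $f$ recording the $2$-adic valuation of the torsion of $G^{\mathrm{ab}}$.

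The technical heart---and the step I expect to be the main obstacle---is the rigidity that erases all higher-degree corrections, so that $r$ equals its normal form exactly rather than merely modulo $F_{(3)}$. Here I would invoke the Poincar\'e-duality structure of infinite Demushkin groups (they are $\mathrm{PD}_2$ groups), equivalently the fact that the relation module $R/[R,R]$ is cyclic as a $\Z_p\dbl G\dbr$-module with $G$ acting through $\chi_G$; this cyclicity is exactly what pins down $q$ via $\im\chi_G\subseteq 1+q\Z_p$, and an inductive Nielsen-transformation argument (the classical Serre--Labute input) then propagates the normalization through each successive Zassenhaus layer, removing higher terms one degree at a time. For the converse I would check directly that each presentation (i)--(iv) has $\dim_{\F_p}H^1=d$, has $\dim_{\F_p}H^2=1$ (the single relator is homologically non-trivial because its initial form is nonzero), and has non-degenerate cup product, by reading the symplectic or quadratic form off the normal form produced above.
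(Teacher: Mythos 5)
You should first note a mismatch of expectations: the paper does \emph{not} prove Theorem~\ref{thm:presentation demushkin} at all. It quotes it as the known classification of Demushkin groups due to Demushkin, Serre and Labute (cf.~\cite{labute:demushkin}, \cite{serre:demushkin}, \cite[\S~III.9]{nsw:cohm}), so the only fair comparison is with the classical proof in those references. Your outline does track the architecture of that proof: a one-relator minimal presentation extracted from $\dim_{\F_p}H^2(G,\F_p)=1$ with $r\in F_{(2)}$; the identification of the cup-product form with the degree-two part of $r$ (this is exactly the pairing mechanism the paper itself sets up later, in Proposition~\ref{prop:commutative diagram} and Subsection~\ref{ssec:pairings}, following \cite[Proposition~3.9.13]{nsw:cohm}); the symplectic normal form for $p$ odd versus a quadratic form over $\F_2$ for $p=2$; and the converse by reading the non-degenerate form off the explicit relator, with $d\geq 2$ ruling out the unique finite Demushkin group $\Z/2\Z$.

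However, as a proof your attempt has a genuine gap precisely where you flag ``the technical heart'': normalizing $r$ \emph{exactly}, not merely modulo $F_{(3)}$. That rigidity statement \emph{is} the theorem --- Serre's successive-approximation argument (improve the relator modulo $F_{(n+1)}$ by an automorphism of the free pro-$p$ group, using non-degeneracy of the form to solve a linear problem at each stage) and Labute's delicate case analysis for $p=2$, $q=2$, occupy the entirety of the cited papers, and invoking the $\mathrm{PD}_2$ property or cyclicity of the relation module does not substitute for it; indeed the $\mathrm{PD}_2$ structure of infinite Demushkin groups is typically a consequence of, not an input to, this classification, so your appeal to it risks circularity. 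There is also a concrete error in your invariant bookkeeping: $f$ is \emph{not} the $2$-adic valuation of the torsion of $G^{\mathrm{ab}}$. In case \eqref{eq:demu2}, for instance, abelianizing gives $\Z_2^{d-2}\oplus\bigl(\Z_2^2/\overline{\langle(2,2^f)\rangle}\bigr)\cong\Z_2^{d-1}\oplus\Z/2\Z$, and similarly in cases \eqref{eq:demu3} and \eqref{eq:demu4} the torsion is $\Z/2\Z$ independently of $f$ (note $2+2^f=2(1+2^{f-1})$ with $1+2^{f-1}$ a unit in $\Z_2$). The parameter $f$ is recovered only from the image of the cyclotomic character $\chi_G$ --- which is exactly why the theorem distinguishes cases (iii) and (iv) by the index $[\im\chi_G:(\im\chi_G)^2]$, an invariant your sketch never actually computes from the presentation.
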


\subsection{Operations}\label{ssec:operations}
\begin{defn}\label{def:free prod}
The \emph{free product} $G_1\ast_p G_2$ of two pro-$p$ groups $G_1,G_2$ is the coproduct of $G_1$ and $G_2$ in the category of pro-$p$ groups. Explicitly, let $G$ be the discrete free product of $G_1$ and $G_2$ and let $\mathcal{N}$ be the family of normal subgroups $N$ of $G$ such that $G/N$ is a finite $p$-group and $N\cap G_1, N\cap G_2$ are open subgroups of $G_1, G_2$ respectively. Then
\[G_1\ast_p G_2=\underleftarrow{\lim}_{N\in\mathcal N}G/N\]
(cf.~\cite{binewe}).

The \emph{free product} of two cyclotomic pairs $(G_1,\chi_1)$ and $(G_2, \chi_2)$ is the cyclotomic pair $(G_1\ast_pG_2, \chi_1\ast_p\chi_2)$, where $\chi_1\ast_p\chi_2$ is the orientation induced by $\chi_1$ and $\chi_2$ via the universal property of the coproduct
(using Remark \ref{rem:character pro-p image}, see also \cite[\S~3]{efrat:small} and \cite[\S~3.4]{qw:cyc}).
\end{defn}
For properties of the free product in the category of pro-$p$ groups we refer to \cite[\S~9.1]{rz:prof}.

\begin{defn}\label{def:semidirect prod}
Let $(G,\chi)$ be a cyclotomic pair and let $m\geq 1$ be an integer. The orientation $\chi$ induces an action of $G$ onto $\Z_p^m$, by 
$$g\bullet(x_1,\dots,x_m)=(\chi(g)x_1,\dots,\chi(g)x_m)\qquad\text{for }g\in G\text{ and }(x_1,\dots,x_m)\in\Z_p^m.$$
This in turn defines the \emph{cyclotomic semidirect product} $\Z_p^m\rtimes G$, by the rule 
$$g(x_1,\dots,x_m)g^{-1}=g\bullet(x_1,\dots,x_m).$$ The associated cyclotomic pair $(\Z_p^m\rtimes G, \chi\circ\pi)$, where $\pi:\Z_p^m\rtimes G\to G$ is the canonical projection,
is also called the \emph{cyclotomic semidirect product} (cf. \cite[\S~3]{efrat:small}).
\end{defn}

\subsection{Elementary type groups}\label{ssec:ET groups}
Any free pro-$p$ group is realizable (this is a consequence of the proposition in \cite[\S~4.8]{lubvan}). Some are actually also maximal pro-$p$ quotients of absolute Galois groups of local fields not containing primitive $p$\textsuperscript{th} roots of unity (cf.~\cite[Ch.~II, \S~5, Theorem 3]{serre}). On the other hand, maximal pro-$p$ quotients of absolute Galois groups of local fields that contain a primitive $p$\textsuperscript{th} root of unity are Demushkin or trivial (cf.~\cite[Ch.~II, \S~5, Theorem 4]{serre}).
It is not yet known whether all infinite Demushkin groups are realizable, nor whether there are infinite Demushkin groups which are realizable but not arising from $p$-adic fields.
Moreover, if $G_1$ and $G_2$ are realizable, so are also the free product $G_1\ast_p G_2$ and the semidirect products $\Z_p^m\rtimes G_1$ (cf.~\cite{efrhar}).
I.~Efrat conjectured that there are actually no other ways to obtain realizable finitely generated pro-$p$ groups (cf.~\cite{ido:ETC}, \cite{jacwar}). The original formulation is in terms of cyclotomic pairs:

\begin{conj}[Elementary Type Conjecture (ETC)]\label{conj:ETC}
The class $\mathcal{RFG}_p$ of realizable finitely generated pro-$p$ cyclotomic pairs is the smallest class of cyclotomic pairs such that
\begin{enumerate}
 \item[(a)] the pair $(1,1)$ consisting of the trivial group and the trivial orientation is in $\mathcal{RFG}_p$, as well as the pairs $(\Z_p,\chi)$ for any continuous homomorphism $\chi\colon \Z_p\to U_p^{(1)}$;
 \item[(b)] the pair $(\Z/2\Z,\chi)$ with $\mathrm{Im}(\chi)=\{\pm1\}$ is in $\mathcal{RFG}_p$, as well as any pair $(G_F(p),\chi_p)$
with $F$ a $p$-adic field satisfying Hypothesis~\ref{ass} and $\chi_p$ the canonical cyclotomic character of $G_F(p)$;
\item[(c)] if $(G_1,\chi_1),(G_2,\chi_2)\in\mathcal{RFG}_p$, then also the free product
$(G_1\ast_pG_2,\chi_1\ast_p\chi_2)$ is in $\mathcal{RFG}_p$;
\item[(d)] if $(G,\chi)\in\mathcal{RFG}_p$, then for any positive integer $m$ also the cyclotomic semi-direct product
$(\Z_p^m\rtimes G,\chi\circ\pi)$ is in $\mathcal{RFG}_p$.
\end{enumerate}
\end{conj}

The conjecture parallels an analogue conjecture for Witt rings. For a general discussion we refer to \cite{marshall:ETC}.

As long as we focus on notions that can be formulated independently of number theory,
we can work with a possibly bigger class:
\begin{defn}\label{defi:ETC}
The class $\mathcal{E}_p$ of $p$-\emph{elementary type} cyclotomic pairs is the smallest class of cyclotomic pairs such that
\begin{enumerate}
 \item[(a)] the pair $(1,1)$ consisting of the trivial group and the trivial orientation is in $\mathcal{RFG}_p$, as well as the pairs $(\Z_p,\chi)$ for any orientation $\chi\colon \Z_p\to U_p^{(1)}$;
 \item[(b)] any pair $(G,\chi)$, with $G$ a Demushkin group and $\chi$ its canonical orientation;
\item[(c)] if $(G_1,\chi_1),(G_2,\chi_2)\in\mathcal{E}_p$, then also the free product
$(G_1\ast_pG_2,\chi_1\ast_p\chi_2)$ is in $\mathcal{E}_p$;
\item[(d)] if $(G,\chi)\in\mathcal{E}_p$, then for any positive integer $m$ also the cyclotomic semi-direct product
$(\Z_p^m\rtimes G,\chi\circ\pi)$ is in $\mathcal{E}_p$.
\end{enumerate}

An \emph{elementary type} pro-$p$ group (\emph{ET group} for short) is a group $G$ appearing in a pair in $\mathcal{E}_p$.
\end{defn}


\section{Proof of Theorems A, B and C}\label{sec:A,B,C}
The proofs of Theorems~A and B proceed in accordance with the inductive definition of the class of ET groups (Definition \ref{defi:ETC}). In other words, we prove first that the theorems hold for free and Demushkin groups, and then that their validity is preserved under free and cyclotomic semidirect products. Theorem C will be proved as an intermediate step in \S \ref{ssec:demushkin}.

\subsection{Free pro-$p$ groups}
\label{ssec:free}

Let $S$ be a finitely generated free pro-$p$ group, with minimal generating set $\{x_1,\ldots,x_d\}$.
Let $\FppX$ be the algebra of formal power series in the non-commuting indeterminates $X=\{X_1,\ldots,X_d\}$ over $\F_p$.
It can be described as the completion of the free algebra $\F_p\gen X$ with respect to
the maximal two-sided ideal $I(X)=(X_1,\dots X_d)$.
As such, $\FppX$ is a topological, compact $\F_p$-algebra, and the closure of $I(X)$ in $\FppX$ is open.
Let $\FppX^\times$ denote the group of units of $\FppX$.
The \emph{Magnus morphism}, given by $x_i\mapsto1+X_i$, induces a monomorphism of profinite groups
\begin{equation}\label{eq:magnus}
 \mu:S\to1+I(X)\subseteq\FppX^\times,
\end{equation}
and an isomorphism of compact $\F_p$-algebras $\F_p\dbl S\dbr\cong\FppX$.
Moreover, one has $S_{(n)}=\{g\in S\mid \mu(g)-1\in I(X)^n\}$ for every $n\geq1$ (cf.~\cite[Sections 4.2, 7.6]{koch}). 
The graded object
\[ \gr\FppX=\bigoplus_{n\geq0}I(X)^n/I(X)^{n+1},\qquad\text{with }I(X)^0=\FppX, \]
is isomorphic to the non-commutative polynomial algebra $\FpX=T_\bullet(\Span(X))$.
Then the embedding $S\hookrightarrow\F_p\dbl S\dbr$, given by $g\mapsto g-1$,
induces a monomorphism of restricted Lie $\F_p$-algebras $L(S)\to\gr\FppX_L$, mapping the initial form $\overline{x_i}$ of $x_i$ to $X_i$. Since the image of $L(S)$ is the free restricted Lie algebra generated by $X$, and $\FpX$ is its universal restricted enveloping algebra,
we get isomorphisms of graded $\F_p$-algebras 
\[
\gr\F_p[\![S]\!]\cong\sU(L(S))\cong\FpX.
\]

On the other hand, the $\F_p$-cohomology of a free pro-$p$ group is the trivial algebra on a basis dual to $\{\overline{x_1},\ldots,\overline{x_d}\}$.
Therefore, Examples \ref{ex:quadratic} and \ref{ex:PBW} yield
\begin{thm}\label{thm:free koszul}
Let $S$ be a finitely generated free pro-$p$ group. Then the algebras $H^\bullet(S,\F_p)$ and $\gr\F_p\dbl S\dbr\cong \sU(L(S))$ are quadratic dual to each other and PBW. 
This establishes Theorems~A and B in the case of finitely generated free pro-$p$ groups.
\end{thm}

\subsection{Demushkin groups}\label{ssec:demushkin}
We stress that, in view of Theorem C, in the treatment of Demushkin groups in this subsection we do not require Hypothesis \ref{ass2}.
That is, we do not assume that the canonical orientation $\chi$ has $\mathrm{Im}(\chi)\subseteq 1+4\Z_2$ if $p=2$. 

If $G$ is a cyclic group of order $2$, then its $\F_2$-cohomology is isomorphic to the polynomial $\F_2$-algebra
in one variable, i.e., $H^1(G,\F_2)$ has order $2$ and
\begin{equation}\label{eq:cohomology C2}
 H^\bullet(G,\F_2)=T_\bullet(H^1(G,\F_2))\cong \F_2[X_1].
\end{equation}
On the other hand, for $G$ a cyclic group of order $2$, $G_{(n)}$ is trivial for $n\geq2$, so $\gr\F_2\dbl G\dbr$ is the trivial $\F_2$-algebra on a $1$-dimensional vector space:
\begin{equation}\label{eq:Z2}
\gr\F_2\dbl G\dbr\cong \F_2\oplus G,
\end{equation}
Hence by Examples~\ref{ex:quadratic} and \ref{ex:PBW} the two $\F_2$-algebras are quadratic dual to each other and PBW.

Assume now that $G$ is infinite. By definition, $H^\bullet(G,\F_p)$ is concentrated in degrees $0$, $1$ and $2$. More specifically, $H^1(G,\F_p)$ has an $\F_p$-basis $\{\chi_1,\dots,\chi_d\}$, with $d$ equal to the minimal number of generators of $G$, and $H^2(G,\F_p)$ has an $\F_p$-basis with just one element $\xi$. The generator $\xi$ and the cup product on $H^\bullet(G,\F_p)$ depend on the shape of a presentation of $G$ in accordance with Theorem \ref{thm:presentation demushkin}. Namely (cf.~\cite[Proposition 3.9.13]{nsw:cohm}),
\[
\xi=\chi_1\cup\chi_2=-\chi_2\cup\chi_1=\chi_3\cup\chi_4=-\chi_4\cup\chi_3=\dots=\chi_{d-1}\cup\chi_d=-\chi_d\cup\chi_{d-1}
\]
and $\chi_i\cup\chi_j=0$ in any other case for presentation \eqref{eq:demu1};
\[
\xi=\chi_1\cup\chi_1=\chi_2\cup\chi_3=\chi_3\cup\chi_2=\chi_4\cup\chi_5=\chi_5\cup\chi_4=\dots=\chi_{d-1}\cup\chi_d=\chi_d\cup\chi_{d-1}
\]
and $\chi_i\cup\chi_j=0$ in any other case for presentation \eqref{eq:demu2};
\[
\xi=\chi_1\cup\chi_1=\chi_1\cup\chi_2=\chi_2\cup\chi_1=\chi_3\cup\chi_4=\chi_4\cup\chi_3=\dots=\chi_{d-1}\cup\chi_d=\chi_d\cup\chi_{d-1}
\]
and $\chi_i\cup\chi_j=0$ in any other case for presentations \eqref{eq:demu3} and \eqref{eq:demu4}.

On the other hand, by \cite[Theorem~6.3]{jochen:massey}, $G$ is mild with respect to the $p$-Zassenhaus filtration. Then by \cite[Theorem~2.12]{jochen:massey}
\[\gr\FppG\cong\frac{\F_p\langle X_1,\dots,X_d\rangle}{\mathcal{R}},\]
where $\mathcal{R}$ is the two-sided ideal generated by the image of the initial form of the relator $r$ in $\F_p\langle X_1,\dots,X_d\rangle\cong\gr\F_p\dbml X_1,\dots,X_d\dbmr$. This image is
\begin{equation}\label{eq:demushkin initial form relator}
 \begin{array}{ll}
  [X_1,X_2]+[X_3, X_4]+\dots+[X_{d-1},X_d]&\mbox{for presentation \eqref{eq:demu1}}\\
X_1^2+[X_2,X_3]+\dots+[X_{d-1},X_d]&\mbox{for presentation \eqref{eq:demu2}}\\
X_1^2+[X_1,X_2]+\dots+[X_{d-1},X_d]&\mbox{for presentations \eqref{eq:demu3} and \eqref{eq:demu4}}.
 \end{array}
\end{equation}

In each of the 3 cases, by construction, $\{\chi_1,\dots,\chi_d\}$ and $\{X_1,\dots,X_d\}$ are dual bases.
Then the quadratic dual of $\gr\FppG$ is seen to be $H^\bullet (G,\F_p)$ by explicit computation.
Now $\gr\FppG$ is PBW by Lemma \ref{lem:one-relator PBW}, and consequently so is $H^\bullet (G)$ by Remark~\ref{rem:PBW}.
This completes the proof of the following.

\begin{thm}\label{thm:demushkin koszul}
Let $G$ be a Demushkin group.
Then the algebras $H^\bullet(G,\F_p)$ and $\gr\FppG\cong \sU(L(G))$ are quadratic dual to each other and PBW. 
This establishes Theorem~C, as well as Theorems~A and B in the case of Demushkin groups.
\end{thm}

\subsection{Free products}
\label{ssec:freeprod}

The $\F_p$-cohomology of the free pro-$p$ product of two pro-$p$ groups is described by the following
(cf.~\cite[Theorem~4.1.4]{nsw:cohm}).

\begin{prop}\label{propo:freeprod cohomology}
 Let $G_1$ and $G_2$ be finitely generated pro-$p$ groups, and set $G=G_1\ast_{p}G_2$.
 Then the inclusions $G_i\hookrightarrow G$ induce an isomorphism of graded $\F_p$-algebras
 \begin{equation}\label{eq:freeprod cohomology}
  \mathrm{res}_{G,G_1}^\bullet\oplus\mathrm{res}_{G,G_2}^\bullet\colon H^\bullet (G,\F_p) 
  \overset{\cong}{\longrightarrow} H^\bullet(G_1,\F_p)\sqcap H^\bullet(G_2,\F_p).
 \end{equation}
In particular, if both $H^\bullet(G_1,\F_p)$ and $H^\bullet(G_2,\F_p)$ are quadratic, so is $H^\bullet (G,\F_p)$.
\end{prop}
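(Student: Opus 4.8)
The plan is to reduce everything to the additive computation of the cohomology of a free pro-$p$ product, which is exactly the content of \cite[Theorem~4.1.4]{nsw:cohm}, and then to upgrade the resulting degreewise isomorphism to an isomorphism of graded algebras by keeping track of the ring structure. First I would assemble the two restriction maps into a single homomorphism of graded $\F_p$-algebras
\[
\Phi=(\mathrm{res}_{G,G_1}^\bullet,\mathrm{res}_{G,G_2}^\bullet)\colon H^\bullet(G,\F_p)\longrightarrow H^\bullet(G_1,\F_p)\times H^\bullet(G_2,\F_p),
\]
where the target carries the Cartesian product of graded algebras with componentwise cup product. This $\Phi$ is a ring homomorphism because each restriction map is compatible with cup products. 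By \cite[Theorem~4.1.4]{nsw:cohm}, in every degree $n\geq1$ the map $\Phi$ restricts to an isomorphism $H^n(G,\F_p)\xrightarrow{\cong}H^n(G_1,\F_p)\oplus H^n(G_2,\F_p)$, while in degree $0$ it is the diagonal embedding $\F_p\hookrightarrow\F_p\times\F_p$.

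Next I would identify the image of $\Phi$ with the direct sum $H^\bullet(G_1,\F_p)\sqcap H^\bullet(G_2,\F_p)$ of Example~\ref{ex:quadratic2}(a). Concretely, the image is the connected graded subalgebra $\mathcal{S}\subseteq H^\bullet(G_1,\F_p)\times H^\bullet(G_2,\F_p)$ whose degree-$0$ part is the diagonal $\F_p$ and whose degree-$n$ part is the full summand $H^n(G_1,\F_p)\oplus H^n(G_2,\F_p)$ for $n\geq1$. The only point to check is that the multiplication inherited from the Cartesian product agrees with that of $\sqcap$; this amounts to the vanishing of cross cup products, namely $\alpha\cup\beta=0$ whenever $\alpha$ comes from $H^{\geq1}(G_1,\F_p)$ and $\beta$ from $H^{\geq1}(G_2,\F_p)$. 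But in the Cartesian product $(\alpha,0)\cdot(0,\beta)=(0,0)$ by definition, so this vanishing is automatic; equivalently, since $\mathrm{res}_{G,G_1}^\bullet$ and $\mathrm{res}_{G,G_2}^\bullet$ respect cup products, any product of a class killed by $\mathrm{res}_{G,G_2}^\bullet$ with one killed by $\mathrm{res}_{G,G_1}^\bullet$ is killed by both restrictions, hence lies in $\ker\Phi$, which is trivial in positive degrees. Thus $\Phi$ is an isomorphism of graded $\F_p$-algebras onto $H^\bullet(G_1,\F_p)\sqcap H^\bullet(G_2,\F_p)$.

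Finally, the quadratic statement is immediate: by Example~\ref{ex:quadratic2}(a) the direct sum of two quadratic algebras is again quadratic, its relators being $\Omega_A\oplus\Omega_B\oplus(A_1\otimes B_1)\oplus(B_1\otimes A_1)$, so if $H^\bullet(G_1,\F_p)$ and $H^\bullet(G_2,\F_p)$ are quadratic then so is their direct sum, and hence so is $H^\bullet(G,\F_p)$. The substantive input here is entirely the cited degreewise computation of \cite[Theorem~4.1.4]{nsw:cohm}; the only genuine verification left to us is the compatibility of the multiplicative structure with the $\sqcap$ construction, and as noted the crucial vanishing of cross products falls out for free from $\Phi$ being an injective ring map in positive degrees. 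The main thing to be careful about is therefore the degree-$0$ bookkeeping, ensuring that the image is the \emph{connected} direct sum $\sqcap$ rather than the non-connected Cartesian product.
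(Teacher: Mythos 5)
Your proposal is correct and takes essentially the same route as the paper: the paper gives no independent argument for this proposition but simply cites \cite[Theorem~4.1.4]{nsw:cohm}, which is exactly the substantive input you use. Your additional bookkeeping --- assembling the restrictions into a ring map $\Phi$ to the Cartesian product, deducing the vanishing of cross cup products from injectivity of $\Phi$ in positive degrees, and checking that the image (diagonal $\F_p$ in degree $0$, full sum in degrees $\geq 1$) is precisely the connected direct sum $\sqcap$ of Example~\ref{ex:quadratic2}(a) --- is a correct elaboration of what the paper leaves implicit.
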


On the other hand, the graded $\F_p$-group algebra of the free pro-$p$ product
of two pro-$p$ groups is described by the following (see also \cite[\S~3]{lichtman:lie}).

\begin{prop}\label{lem:freeprodL}
 Let $G_1$, $G_2$ and $G$ be as above. Then there is an isomorphism of graded $\F_p$-algebras
\begin{equation}\label{eq:lemfreeprodL}
      \sU(L(G_1))\sqcup\sU(L(G_2))
      \overset{\cong}{\longrightarrow}\sU(L(G)).
\end{equation}
\end{prop}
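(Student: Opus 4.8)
The plan is to reduce the statement, via Jennings's Theorem, to a purely ring-theoretic assertion about complete group algebras, and then to check that passing to the associated graded algebra commutes with the formation of coproducts. By Theorem \ref{thm:LG grFpG} we may identify $\sU(L(G_i))\cong\gr\F_p\dbl G_i\dbr$ for $i=1,2$ and $\sU(L(G))\cong\gr\FppG$. Thus the asserted isomorphism \eqref{eq:lemfreeprodL} becomes equivalent to an isomorphism of graded $\F_p$-algebras
\[
\gr\F_p\dbl G_1\dbr\sqcup\gr\F_p\dbl G_2\dbr\;\overset{\cong}{\longrightarrow}\;\gr\FppG,
\]
which should moreover be the one induced by the inclusions $G_i\hookrightarrow G$.

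First I would identify the complete group algebra of the free pro-$p$ product as a coproduct of augmented algebras. The functor $\F_p\dbl\slot\dbr$ sending a pro-$p$ group to its complete group algebra is left adjoint to the functor sending a complete augmented $\F_p$-algebra $M$ to the pro-$p$ group $1+\mathfrak{m}_M$ of its one-units, where $\mathfrak{m}_M$ denotes the augmentation ideal (this is the content of the Magnus-type universal property, cf.~\cite[Section 7]{koch}). As a left adjoint it preserves coproducts, and since $G=G_1\ast_pG_2$ is by definition the coproduct of $G_1$ and $G_2$ in the category of pro-$p$ groups (Definition \ref{def:free prod}), one obtains a canonical isomorphism $\FppG\cong\F_p\dbl G_1\dbr\sqcup\F_p\dbl G_2\dbr$, the coproduct being taken in complete augmented $\F_p$-algebras and induced by the inclusions.

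The heart of the proof is then the following lemma: for complete augmented $\F_p$-algebras $A$, $B$ filtered by the powers of their augmentation ideals $\mathfrak{m}_A,\mathfrak{m}_B$, passing to the associated graded commutes with the coproduct, i.e.\ $\gr(A\sqcup B)\cong\gr A\sqcup\gr B$ as graded algebras. To prove it I would use the standard decomposition of the coproduct $C=A\sqcup B$ as the direct sum of $\F_p$ and of all alternating tensor products $\mathfrak{m}_{\epsilon_1}\otimes\cdots\otimes\mathfrak{m}_{\epsilon_k}$ with $\epsilon_i\in\{A,B\}$ and $\epsilon_i\neq\epsilon_{i+1}$. The augmentation ideal $\mathfrak{m}_C$ is the span of the nonempty alternating words, and one checks that the filtration by the powers $\mathfrak{m}_C^n$ is multiplicative and that a word $w_1\cdots w_k$ with $w_i\in\mathfrak{m}_{\epsilon_i}^{d_i}\setminus\mathfrak{m}_{\epsilon_i}^{d_i+1}$ lies precisely in $\mathfrak{m}_C^{d_1+\cdots+d_k}$. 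Taking leading terms then exhibits $\gr_n C$ as $\bigoplus\bigl(\gr_{d_1}A_{\epsilon_1}\otimes\cdots\otimes\gr_{d_k}A_{\epsilon_k}\bigr)$, the sum ranging over alternating words and over compositions $d_1+\cdots+d_k=n$ with all $d_i\geq1$; and the induced multiplication is concatenation of words with merging of adjacent letters coming from the same factor. This is exactly the underlying graded vector space and the product of the coproduct $\gr A\sqcup\gr B$ of connected graded $\F_p$-algebras (the free product, which in the quadratic case reduces to the construction of Example \ref{ex:quadratic2}(b)), whence the lemma.

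Combining the two steps yields
\[
\gr\FppG\cong\gr\left(\F_p\dbl G_1\dbr\sqcup\F_p\dbl G_2\dbr\right)\cong\gr\F_p\dbl G_1\dbr\sqcup\gr\F_p\dbl G_2\dbr,
\]
which is the required isomorphism; by naturality it is induced by the inclusions, matching \eqref{eq:lemfreeprodL}. As a cross-check, the whole statement admits a Lie-theoretic reformulation (cf.~\cite[Section 3]{lichtman:lie}): the lemma is equivalent to the identity $L(G)\cong L(G_1)\ast L(G_2)$ of restricted Lie algebras, after which Proposition \ref{prop:U prod sum}(i) delivers $\sU(L(G))\cong\sU(L(G_1))\sqcup\sU(L(G_2))$ at once. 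The main obstacle is the lemma of the third paragraph: the bookkeeping needed to show that the $\mathfrak{m}_C$-adic filtration is governed exactly by the total augmentation degree of the alternating words, together with the care required to ensure that completion does not affect the graded pieces — a point that is harmless here because $G_1,G_2$ are finitely generated, so every $\gr_n$ is finite-dimensional and hence unchanged by completing.
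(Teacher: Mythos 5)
Your proposal is correct, but it takes a genuinely different route from the paper's proof. The paper works at the level of restricted Lie algebras: writing $G^{\mathrm{abs}}$ for the discrete free product of $G_1$ and $G_2$, it quotes Lichtman's theorem $L(G^{\mathrm{abs}})=L(G_1)\ast L(G_2)$, uses finite generation --- every $G_{(n)}$ is open, so $G^{\mathrm{abs}}/G^{\mathrm{abs}}_{(n)}\cong G/G_{(n)}$ because $G$ is the pro-$p$ completion of $G^{\mathrm{abs}}$ --- to deduce $L(G)\cong L(G_1)\ast L(G_2)$, and then feeds this into Proposition~\ref{prop:U prod sum}(i). You work instead at the level of complete group algebras: Jennings's Theorem~\ref{thm:LG grFpG} converts the claim into $\gr\F_p\dbl G_1\dbr\sqcup\gr\F_p\dbl G_2\dbr\cong\gr\FppG$, the adjunction between $\F_p\dbl\slot\dbr$ and $M\mapsto 1+\mathfrak{m}_M$ identifies $\FppG$ with the completed coproduct of the $\F_p\dbl G_i\dbr$, and an alternating-word computation shows that $\gr$ commutes with that coproduct. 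The trade-off is clear: the paper's proof is short because Lichtman's theorem absorbs all the combinatorics, and the intermediate statement $L(G)\cong L(G_1)\ast L(G_2)$ is of independent interest; your argument avoids Lichtman, is self-contained modulo Koch's universal property, and makes transparent exactly where finite generation enters (finite-dimensional graded pieces, so completion leaves $\gr$ untouched), but it concentrates the real work in the lemma of your third paragraph, which you only sketch --- the identity $\mathfrak{m}_C^n=\bigoplus_{\mathrm{words}}\sum_{d_1+\cdots+d_k\geq n}\mathfrak{m}_{\epsilon_1}^{d_1}\otimes\cdots\otimes\mathfrak{m}_{\epsilon_k}^{d_k}$ and the resulting description of $\gr_n C$ --- and that lemma is essentially the enveloping-algebra shadow of Lichtman's theorem, so in a full write-up you would in effect be reproving it. Two details to pin down there: the adjunction requires the target category to consist of pro-(finite-dimensional) complete augmented algebras, so that $1+\mathfrak{m}_M$ is genuinely a pro-$p$ group; and while coproduct preservation is automatic for a left adjoint, your $\gr$ computation additionally needs the concrete model of that coproduct as the $\mathfrak{m}$-adic completion of the algebraic free product, which should be verified (both points are standard).
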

\begin{proof}
Let $G^{\mathrm{abs}}$ denote the abstract free product of $G_1$ and $G_2$ as abstract, discrete groups. 
Then $G$ is the completion of $G^{\mathrm{abs}}$ with respect to the pro-$p$ topology
and one has the chain of inclusions $G_i\hookrightarrow G^{\mathrm{abs}}\hookrightarrow G$ for both $i=1,2$.
By \cite[Theorem~2]{lichtman:lie} the restricted Lie algebra of $G^{abs}$ is the free product of restricted
Lie algebras
\begin{equation}\label{eq:freeprod Labs}
 L(G^{\mathrm{abs}})=L(G_1)\ast L(G_2).
\end{equation}

Recall that, since $G$ is a finitely generated pro-$p$ group, its Zassenhaus filtration $(G_{(n)})_{n\geq1}$ is a topological neighborhood basis of 1 consisting of open normal subgroups of $G$ (cf., e.g., \cite[Theorem~7.11]{koch}).
Therefore, the inclusion $G^{\mathrm{abs}}\hookrightarrow G$ induces isomorphisms
of finite $p$-groups $G^{\mathrm{abs}}/G^{\mathrm{abs}}_{(n)}\cong G/G_{(n)}$ for every $n\geq1$, since $G$
is the pro-$p$ completion of $G^{abs}$.
Thus \eqref{eq:freeprod Labs} implies
\[\label{eq:freeprod L}
L(G_1)\ast L(G_2)=L(G^{\mathrm{abs}})\overset{\cong}{\longrightarrow} L(G).
\]
Now \eqref{eq:lemfreeprodL} follows from Proposition \ref{prop:U prod sum}.
\end{proof}

\begin{prop}\label{prop:freeprod all}
 Let $G_1$ and $G_2$ be two finitely generated pro-$p$ groups, and set $G=G_1\ast_p G_2$.
Assume that $H^\bullet(G_1,\F_p)$ and $H^\bullet(G_2,\F_p)$ are quadratic $\F_p$-algebras.
\begin{itemize}
 \item[(i)] If for $i=1,2$ $H^\bullet(G_i,\F_p)^!\cong\gr\F_p\dbl G_i\dbr$,
then $H^\bullet(G,\F_p)^!\cong \gr\FppG$.
 \item[(ii)] If for $i=1,2$ $H^\bullet(G_i,\F_p)$, resp.~$\gr\F_p\dbl G_i\dbr$, are PBW algebras,
then also $H^\bullet(G,\F_p)$, resp.~$\gr\FppG$, is PBW.
\end{itemize}
\end{prop}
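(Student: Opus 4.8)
The plan is to derive both statements formally from the three structural results just established---the description of the cohomology of a free pro-$p$ product (Proposition \ref{propo:freeprod cohomology}), the description of its restricted Lie algebra (Proposition \ref{lem:freeprodL}), and Jennings' identification $\gr\FppG\cong\sU(L(G))$ (Theorem \ref{thm:LG grFpG})---combined with the De Morgan dualities of Remark \ref{rem:de Morgan} and the closure of the PBW property under $\sqcap$ and $\sqcup$ (Example \ref{ex:PBW}(3)). No new computation is required; the task is simply to thread these isomorphisms together.

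For part (i), I would first record that $H^\bullet(G,\F_p)\cong H^\bullet(G_1,\F_p)\sqcap H^\bullet(G_2,\F_p)$ by Proposition \ref{propo:freeprod cohomology}, so that $H^\bullet(G,\F_p)$ is quadratic and its quadratic dual is defined. Applying the first De Morgan duality of Remark \ref{rem:de Morgan} then gives
\[
H^\bullet(G,\F_p)^!\cong\left(H^\bullet(G_1,\F_p)\sqcap H^\bullet(G_2,\F_p)\right)^!\cong H^\bullet(G_1,\F_p)^!\sqcup H^\bullet(G_2,\F_p)^!.
\]
Next I would substitute the hypothesis $H^\bullet(G_i,\F_p)^!\cong\gr\F_p\dbl G_i\dbr\cong\sU(L(G_i))$ and invoke Proposition \ref{lem:freeprodL} to recognise $\sU(L(G_1))\sqcup\sU(L(G_2))\cong\sU(L(G))\cong\gr\FppG$, the last step again by Jennings. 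Composing the chain yields $H^\bullet(G,\F_p)^!\cong\gr\FppG$, as desired.

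For part (ii), the two assertions are handled by the same closure property applied on opposite sides of the duality. If the $H^\bullet(G_i,\F_p)$ are PBW, then so is their direct sum $H^\bullet(G_1,\F_p)\sqcap H^\bullet(G_2,\F_p)\cong H^\bullet(G,\F_p)$ by Example \ref{ex:PBW}(3), with PBW generators the union of the two given sets. Symmetrically, if the $\gr\F_p\dbl G_i\dbr\cong\sU(L(G_i))$ are PBW, then by Proposition \ref{lem:freeprodL} and Jennings one has $\gr\FppG\cong\sU(L(G_1))\sqcup\sU(L(G_2))$, which is PBW as a free product of PBW algebras, again by Example \ref{ex:PBW}(3).

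The argument is essentially formal, being a concatenation of established isomorphisms, so I do not expect a substantial obstacle; the points that genuinely need verification are that every map in the chain is an isomorphism of graded algebras compatible with the internal grading, and that the identification in (i) exhibits $\gr\FppG$ as a \emph{quadratic} algebra---this is not assumed a priori but falls out of the free-product decomposition. For (ii) one should also note that the two claims are independent: the cohomological one draws its PBW generators from the $H^\bullet(G_i,\F_p)$, while the group-algebra one draws them from the $\gr\F_p\dbl G_i\dbr$, so Example \ref{ex:PBW}(3) applies to each separately and no cross-compatibility of the two generating sets is needed.
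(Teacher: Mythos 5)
Your proof is correct and follows exactly the paper's own route: the paper proves (i) by citing Proposition \ref{propo:freeprod cohomology}, Proposition \ref{lem:freeprodL} and the De Morgan duality of Remark \ref{rem:de Morgan}, and (ii) by combining these decompositions with Example \ref{ex:PBW}(3), which is precisely the chain of isomorphisms you spell out. Your explicit write-up (including the observation via Theorem \ref{thm:LG grFpG} that $\gr\FppG\cong\sU(L(G))$, and the remark that the two PBW claims in (ii) are independent applications of the closure property) merely makes visible the steps the paper leaves implicit.
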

\begin{proof}
\begin{itemize}
 \item[(i)] follows from Proposition \ref{propo:freeprod cohomology}, Proposition \ref{lem:freeprodL}, Theorem \ref{thm:LG grFpG} and Remark \ref{rem:de Morgan}.
 \item[(ii)] follows from Proposition \ref{propo:freeprod cohomology}, Proposition \ref{lem:freeprodL}, Theorem \ref{thm:LG grFpG} and Example \ref{ex:PBW}.
 \end{itemize}
\end{proof}

\subsection{Semidirect products}\label{ssec:semidirect}
Let $(G_0,\chi_0)$ be a finitely generated cyclotomic pair.
The $\F_p$-cohomology of a semidirect product $(\Z_p^m\rtimes G_0, \chi_0\circ\pi)$ as in Definition \ref{def:semidirect prod}
is described in \cite[Theorem~3.1, Corollary~3.4, Theorem 3.6]{wadsworth:cohomology}.
Actually, in that paper the author uses valuations on fields,
but a careful checking reveals that one can extract from his arguments a proof of the following proposition
that relies only on the group-theroretic data we have and the Hochschild-Serre spectral sequence (cf., e.g., \cite[Theorem~3.13]{qw:cyc}).


\begin{defn}\label{def:wadsworth extension}
Let $A_\bullet$ be a graded-commutative quadratic $\F_p$-algebra, with space of generators $V$ and space of relators $\Omega$, and let $\{t,a_1,\dots,a_d\}$ be a set which linearly spans $V$, with $t$ a distinguished element such that $t+t=0$.
Let $\{x_j\mid j\in J\}$ be a set of distinct symbols not in $A$. The \emph{twisted extension} of $A_\bullet$ by $J$ is the quadratic $\F_p$-algebra $A[J;t]_\bullet$ with space of generators $\Span_{\F_p}\{t,a_1,\dots,a_d,x_j\mid j\in J\}$ and space of relators 
$$\Span_{\F_p}(\Omega\cup \{x_ix_j+x_jx_i,x_jt+tx_j,x_ja_k+a_kx_j,x_j^2-tx_j\mid i,j\in J, k=1,\dots,d\}).$$
\end{defn}
(The notion of twisted extension was introduced in \cite[Definition~1.8]{wadsworth:cohomology}.)

\begin{rem}\label{rem:AJt}
 Let $A_\bullet$ be as in Definition~\ref{def:wadsworth extension}.
 If $t=0$ then $A[J;t]_\bullet\cong A_\bullet\otimes^{-1}\Lambda_\bullet(W)$, where $W=\Span_{\F_p}\{x_j\mid j\in J\}$.
 Note that if $p\neq2$ then necessarily $t=0$.
\end{rem}

\begin{prop}\label{prop:fibreprod cohomology}
 Let $(G_0,\chi_0)$ be a finitely generated cyclotomic pair and $m$ be a positive integer, and set
 $(G,\chi)=(\Z_p^m\rtimes G_0, \chi_0\circ\pi)$.
 Then the inflation map $H^\bullet(G_0,\F_p)\to H^\bullet(G,\F_p)$ and the restriction map
 $H^\bullet(G,\F_p)\to H^\bullet(\Z_p^m,\F_p)$
 induce an isomorphism of $\F_p$-algebras
 \begin{equation}\label{eq:fibreprodH}
 H^\bullet(G,\F_p)\overset{\cong}{\longrightarrow}H^\bullet(G_0,\F_p)[J,t],
\end{equation}
with $J=\{1,\dots,m\}$.
In particular, if $H^\bullet(G_0,\F_p)$ is quadratic, then so is $H^\bullet(G,\F_p)$.
\end{prop}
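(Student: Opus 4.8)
The plan is to study the Hochschild--Serre spectral sequence of the split extension
\[
1 \longrightarrow \Z_p^m \longrightarrow G \longrightarrow G_0 \longrightarrow 1,
\]
that is $E_2^{s,t}=H^s(G_0,H^t(\Z_p^m,\F_p))\Rightarrow H^{s+t}(G,\F_p)$, and to extract the multiplicative structure from its collapse. First I would compute the fibre cohomology: as $\Z_p^m$ is a free abelian pro-$p$ group of rank $m$, one has $H^\bullet(\Z_p^m,\F_p)\cong\Lambda_\bullet(W)$, the exterior algebra on $W=H^1(\Z_p^m,\F_p)\cong\F_p^m$, with basis $\{x_1,\dots,x_m\}$ dual to the standard topological generators $e_1,\dots,e_m$. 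The key point is that $G_0$ acts \emph{trivially} on the fibre cohomology: the action on $\Z_p^m$ is coordinatewise multiplication by $\chi_0$, and by Remark~\ref{rem:character pro-p image} the image of $\chi_0$ lies in $U_p^{(1)}\subseteq 1+p\Z_p$, so $\chi_0(g)\equiv 1\pmod p$ for every $g$. Hence $E_2^{s,t}\cong H^s(G_0,\F_p)\otimes_{\F_p}\Lambda^t(W)$.

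Next I would show that the spectral sequence degenerates at the second page. Since the extension splits, the section $G_0\hookrightarrow G$ makes the inflation map $H^\bullet(G_0,\F_p)\to H^\bullet(G,\F_p)$ split injective, so the base row $E_2^{\bullet,0}$ survives to $E_\infty$; the five-term exact sequence then forces the transgression $d_2\colon E_2^{0,1}=W\to E_2^{2,0}=H^2(G_0,\F_p)$ to vanish, and the classes of $W$ admit no other nonzero differentials for bidegree reasons. As $H^\bullet(G_0,\F_p)$ is quadratic, hence generated in degree $1$, the algebra $E_2$ is generated by $E_2^{1,0}\oplus E_2^{0,1}$; both summands consist of permanent cycles, so by the Leibniz rule every class is a permanent cycle and $E_2=E_\infty$. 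In particular the Poincar\'e series of $H^\bullet(G,\F_p)$ equals that of $H^\bullet(G_0,\F_p)\otimes\Lambda_\bullet(\F_p^m)$.

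It then remains to solve the multiplicative extension problem. I would lift $t,a_1,\dots,a_d\in H^1(G_0,\F_p)$ through inflation and $x_1,\dots,x_m$ through restriction, getting degree-$1$ classes in $H^1(G,\F_p)$. The relators in $\Omega$ persist because inflation is an injective algebra homomorphism, and the mixed relations $x_ix_j+x_jx_i=0$, $x_jt+tx_j=0$ and $x_ja_k+a_kx_j=0$ hold on the nose, being instances of graded-commutativity of the cup product. The only genuinely nontrivial relation is the square $x_j^2=tx_j$: in $E_\infty$ one merely sees $x_j^2=0$, so this identity records a jump in the filtration. For $p$ odd graded-commutativity already gives $x_j^2=-x_j^2=0$ (and $t=0$); for $p=2$ one has $x_j^2=\beta(x_j)$, the Bockstein of $x_j$, and the semidirect structure --- encoded by the relations $ge_jg^{-1}=e_j^{\chi_0(g)}$ --- is exactly what yields $\beta(x_j)=t\cup x_j$. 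Carrying out this Bockstein computation, abstracted from the valuation-theoretic argument of \cite{wadsworth:cohomology}, is the main obstacle.

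Finally, the generators and relations above produce an epimorphism from the twisted extension $H^\bullet(G_0,\F_p)[J,t]$ onto $H^\bullet(G,\F_p)$, surjective because these classes generate $E_\infty$. Viewed as a left $H^\bullet(G_0,\F_p)$-module, the twisted extension is free on the square-free monomials $\{x_S:S\subseteq J\}$, since $x_j^2=tx_j$ eliminates repeated factors; hence its Poincar\'e series is $P_{H^\bullet(G_0)}(s)\,(1+s)^m$, matching the one found above. A degreewise comparison of dimensions upgrades the epimorphism to the desired isomorphism.
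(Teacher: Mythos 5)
Your overall route is the right one---and in fact the paper gives no self-contained argument here: its ``proof'' is the citation of \cite[Theorem~3.1, Corollary~3.4, Theorem~3.6]{wadsworth:cohomology} together with the assertion that Wadsworth's valuation-theoretic arguments can be redone using only group-theoretic data and the Hochschild--Serre spectral sequence, which is exactly the extraction you attempt. Your first three steps are sound: $H^\bullet(\Z_p^m,\F_p)\cong\Lambda_\bullet(\F_p^m)$, the $G_0$-action on the fibre cohomology is trivial since $\chi_0(g)\equiv 1\pmod p$, and the collapse at $E_2$ follows from the splitting (you do not even need quadraticity of $H^\bullet(G_0,\F_p)$ for this: every class in the bottom row $E_2^{\bullet,0}$ is automatically a permanent cycle, so the Leibniz argument runs with $E_2^{\bullet,0}$ and $E_2^{0,1}$ as generators). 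But there is one genuine hole, and you flag it yourself: the identity $x_j^2=t\cup x_j$ for $p=2$ is precisely what distinguishes the twisted extension from a plain skew tensor product, and you assert $\beta(x_j)=t\cup x_j$ without proving it---nor do you ever say what $t$ \emph{is} as an element of $H^1(G_0,\F_2)$. A proof that stops at ``the semidirect structure is exactly what yields this'' has not discharged the one non-formal point of the whole proposition.

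Fortunately the gap is filled by a short explicit cochain computation. Define $t\in \Hom(G_0,\F_2)=H^1(G_0,\F_2)$ by $\chi_0(g)\equiv(-1)^{t(g)}\pmod 4$ (for $p$ odd set $t=0$, as forced by $t+t=0$). Writing elements of $G$ as pairs $(v,g)$ with $v\in\Z_2^m$ and $(v,g)(w,h)=(v+\chi_0(g)w,\,gh)$, the map $\tilde x_j(v,g)=v_j\bmod 2$ is a homomorphism lifting $x_j$ (here one uses that $\chi_0(g)$ is odd). Lift it to the $\Z/4$-valued cochain $c(v,g)=v_j\bmod 4$; then $(\delta c)\bigl((v,g),(w,h)\bigr)=(1-\chi_0(g))\,w_j\equiv 2\,t(g)\,w_j\pmod 4$, whence $\beta(\tilde x_j)=t\cup\tilde x_j$, and since $x\cup x=\beta(x)$ for degree-one classes mod $2$, the relation $x_j^2=tx_j$ holds on the nose in $H^\bullet(G,\F_2)$. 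One smaller slip at the end: you \emph{assume} that $A[J;t]_\bullet$ is free as a left $H^\bullet(G_0,\F_p)$-module on the square-free monomials $x_S$, but a priori the rewriting rules only show these monomials span. That suffices, however: spanning bounds $\dim A[J;t]_n$ from above by the $E_\infty$-dimensions, while your epimorphism onto $H^\bullet(G,\F_p)$ bounds it from below, so the isomorphism and the freeness drop out simultaneously from the dimension count.
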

If $G_0$ is the maximal pro-$p$ quotient of the absolute Galois group of a field $F$ satisfying Hypothesis \ref{ass}, the element $t$ corresponds to the square class of
$-1\in F$ in Bloch-Kato isomorphism, so $t=0$ if $p$ is odd, or if $p=2$ and $F$ satisfies also Hypothesis \ref{ass2} (cf.~\cite[Examples 1.12]{wadsworth:cohomology}). More generally, $t=0$ for all cyclotomic pairs $(G,\chi)$ in the class $\mathcal{E}_p$ (see Definition \ref{defi:ETC}), satisfying $\mathrm{Im}(\chi)\subseteq1+4\Z_2$ in case $p=2$, and so
 \begin{equation}\label{eq:fibreprodHt0}
 H^\bullet(G,\F_p)\overset{\cong}{\longrightarrow}H^\bullet(G_0,\F_p)\otimes^{-1}\Lambda_\bullet(W),
\end{equation}
where $W$ is as in Remark~\ref{rem:AJt}.

Let $k\in\N\cup\{\infty\}$ be such that $\mathrm{Im}(\chi_0)=1+p^k\Z_p\subseteq \Z_p^\times$, with the convention that $p^\infty=0$.
We set $Z=\Z_p^m$, and we shall use the multiplicative notation for it.
\begin{lem}\label{lem:gamma fprod}
Let $(G_0,\chi_0)$, $(G,\chi)$ and $Z$ be as above.
\begin{itemize}
 \item[(i)] For every $s\geq1$ one has $(ZG_0)^{p^s}=Z^{p^s}\cdot G_0^{p^s}$.
 \item[(ii)] For every $n\geq2$ one has $\gamma_n(G)=Z^{p^{(n-1)k}}\cdot \gamma_n(G_0)$. 
 \item[(iii)] For every $n\geq1$, $G_{(n)}=\left(Z^{p^{\lceil \log_p(n)\rceil}}\right)\rtimes (G_0)_{(n)}$. 
\end{itemize}
\end{lem}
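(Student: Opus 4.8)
The plan is to derive all three parts from two elementary computations describing how $G_0$ acts on $Z$. Writing the action multiplicatively, for $g\in G_0$ and $z\in Z$ one has $g^{-1}zg=z^{\chi_0(g)^{-1}}$, so the commutator is $[z,g]=z^{-1}g^{-1}zg=z^{\chi_0(g)^{-1}-1}$. Since $\chi_0(g)\in 1+p^k\Z_p$, the exponent $\chi_0(g)^{-1}-1$ runs exactly through $p^k\Z_p$ as $g$ varies; hence $[Z^{p^a},G_0]=Z^{p^{a+k}}$ for every $a\geq0$. The second observation is that, because $\chi_0$ takes values in the abelian group $1+p^k\Z_p$, we have $\chi_0(\gamma_i(G_0))=1$ for all $i\geq2$; consequently $\gamma_i(G_0)$ acts trivially on $Z$ and $[Z,\gamma_i(G_0)]=1$ for $i\geq2$. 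Throughout, all subgroups are closed and ``generated by'' means topologically generated; since the commutator subgroups that occur are already normal in $G$, I may generate them by commutators of generating sets without worrying about normal closure.

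For (i) I would compute powers directly. An induction on $l$ gives $(zg)^l=z^{1+\chi_0(g)+\dots+\chi_0(g)^{l-1}}g^l$, so with $l=p^s$,
\[
(zg)^{p^s}=z^{N_s(g)}g^{p^s},\qquad N_s(g)=\sum_{t=0}^{p^s-1}\chi_0(g)^t.
\]
The inclusion $Z^{p^s}G_0^{p^s}\subseteq(ZG_0)^{p^s}$ is clear, and $Z^{p^s}G_0^{p^s}$ is a subgroup because $Z^{p^s}$ is normal in $G$. For the reverse inclusion it suffices to show $N_s(g)\in p^s\Z_p$ for every $g$, for then $z^{N_s(g)}\in Z^{p^s}$ and each generator $(zg)^{p^s}$ of $(ZG_0)^{p^s}$ lies in $Z^{p^s}G_0^{p^s}$. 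When $\chi_0(g)=1$ this is immediate since $N_s(g)=p^s$; when $\chi_0(g)\neq1$ one has $N_s(g)=(\chi_0(g)^{p^s}-1)/(\chi_0(g)-1)$, and writing $\chi_0(g)-1=p^au$ with $a\geq k\geq1$ a standard ``lifting the exponent'' computation gives $v_p(N_s(g))=v_p(\chi_0(g)^{p^s}-1)-a\geq s$. The only point requiring care is $p=2$ with $a=1$, which I would treat with the $p=2$ form of the lifting-the-exponent lemma, noting that it still yields $v_2(N_s(g))\geq s$ (the extreme case $\chi_0(g)=-1$ giving $N_s(g)=0$).

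For (ii) I would argue by induction on $n$, the base case $n=2$ reading $\gamma_2(G)=[ZG_0,ZG_0]=[Z,G_0]\cdot[G_0,G_0]=Z^{p^k}\gamma_2(G_0)$, using $[Z,Z]=1$ and the first computation. For the inductive step, write $\gamma_n(G)=Z^{p^{(n-1)k}}\gamma_n(G_0)$ and expand
\[
\gamma_{n+1}(G)=[G,\gamma_n(G)]=[ZG_0,\ Z^{p^{(n-1)k}}\gamma_n(G_0)].
\]
The four families of generating commutators vanish or simplify: $[Z,Z^{p^{(n-1)k}}]=1$; $[Z,\gamma_n(G_0)]=1$ by the second observation (as $n\geq2$); $[G_0,Z^{p^{(n-1)k}}]=Z^{p^{nk}}$; and $[G_0,\gamma_n(G_0)]=\gamma_{n+1}(G_0)$. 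Since $Z^{p^{nk}}$ is normal in $G$ and $Z$ centralizes $\gamma_{n+1}(G_0)$, passing to the normal closure in $G$ introduces nothing new, so $\gamma_{n+1}(G)=Z^{p^{nk}}\gamma_{n+1}(G_0)$, completing the induction.

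For (iii) I would feed (i) and (ii) into Lazard's formula \eqref{eq:Lazard}, $G_{(n)}=\prod_{ip^j\geq n}\gamma_i(G)^{p^j}$. Using (i) for $i=1$, and for $i\geq2$ using (ii) together with the splitting $(Z^{p^a}\gamma)^{p^j}=Z^{p^{a+j}}\gamma^{p^j}$ (valid because $\gamma_i(G_0)$ centralizes $Z$ for $i\geq2$), each factor decomposes as $Z^{p^{e(i,j)}}\cdot\delta_{i,j}$ with $e(1,j)=j$, $e(i,j)=(i-1)k+j$ for $i\geq2$, and $\delta_{i,j}$ equal to $G_0^{p^j}$ or $\gamma_i(G_0)^{p^j}$. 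Since the $Z$-parts are normal in $G$ they can be collected to the left, giving $G_{(n)}=\bigl(\prod Z^{p^{e(i,j)}}\bigr)\bigl(\prod\delta_{i,j}\bigr)$. The $G_0$-part is exactly $\prod_{ip^j\geq n}\gamma_i(G_0)^{p^j}=(G_0)_{(n)}$ by Lazard's formula for $G_0$, and the $Z$-part equals $Z^{p^{\min e(i,j)}}$. The crux is then the purely arithmetic claim that $\min\{e(i,j):ip^j\geq n\}=\lceil\log_p n\rceil=:h$. This value is attained at $(1,h)$; for the lower bound, if $i\geq2$ and $(i-1)k+j\leq h-1$ then $j\leq h-1-(i-1)$ (using $k\geq1$), whence $ip^j\leq i\,p^{\,h-1}/p^{\,i-1}\leq p^{\,h-1}<n$ because $p^{i-1}\geq i$, contradicting $ip^j\geq n$. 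As $Z\cap G_0=1$, the product $Z^{p^h}\cdot(G_0)_{(n)}$ is the semidirect product $Z^{p^h}\rtimes(G_0)_{(n)}$, as asserted. I expect part (iii) to be the main obstacle: marshalling Lazard's formula through the substitutions and, above all, the exponent minimization, whose heart is the elementary inequality $p^{i-1}\geq i$; the $p$-adic valuation estimate in (i), with its $p=2$ exception, is the other technical point but is entirely standard.
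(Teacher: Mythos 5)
Your proof is correct and takes essentially the same route as the paper's: the geometric-sum exponent $(zw)^{p^s}=z^{(\chi_0(w)^{p^s}-1)/(\chi_0(w)-1)}w^{p^s}$ with a $p$-adic valuation estimate for (i), induction via elementary commutator calculus and the triviality of the $\gamma_i(G_0)$-action on $Z$ (as $\gamma_i(G_0)\subseteq\ker\chi_0$ for $i\geq2$) for (ii), and substitution into Lazard's formula \eqref{eq:Lazard} followed by the exponent minimization $\min\{(i-1)k+h \mid i\geq 2,\ ip^h\geq n\}\geq\lceil\log_p(n)\rceil$, whose heart is $p^{i-1}\geq i$, for (iii). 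The only differences are presentational: you make explicit the $p=2$ lifting-the-exponent edge case and the normality/closedness bookkeeping that the paper leaves implicit.
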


\begin{proof}
Every element $g$ of $G$ can be written as $g=z\cdot w$, with $z\in Z$ and $w\in G_0$.
Clearly, for any $s\geq1$ one has the inclusions $Z^{p^s}\subseteq(ZG_0)^{p^s}$ and $G_0^{p^s}\subseteq(ZG_0)^{p^s}$,
so that $(ZG_0)^{p^s}\supseteq Z^{p^s}G_0^{p^s}$.
On the other hand, for any $z\in Z$ and $w\in G_0$ one has
\[
(zw)^{p^s}=z^{1+\chi_0(w)+\ldots+\chi_0(w)^{p^s-1}}\cdot w^{p^s}=
\begin{cases}
z^{p^s}\cdot w^{p^s} & \text{if }\chi_0(w)\!=\!1, \\ 
z^0\cdot w^{p^s} & \text{if }p\!=\! 2\text{ and }\chi_0(w)\!=\!-1, \\ 
 z^{\frac{\chi_0(w)^{p^s}-1}{\chi_0(w)-1}}\cdot w^{p^s} & \text{otherwise.}
\end{cases}\]
Clearly, the exponent of $z$ is divisible by $p^s$ in the first two cases. 
Suppose that we are in the last case. 
 Let $v_p$ be the $p$-adic valuation. One can check that, for any $\alpha\in p\Z_p$ (with $\alpha\in 4\Z_2$ if $p=2$) and $n\in \N$,
\[
v_p((1+\alpha)^n-1)=v_p(\alpha)+v_p(n)
\]
(this is sometimes called the Lifting The Exponent Lemma).
Therefore, if $\chi_0(w)\in 1+p^k\Z_p$ (with $k\geq 2$ if $p=2$), then by writing $\chi_0(w)=1+\alpha$, one has
\[
v_p\left(\frac{\chi_0(w)^{p^s}-1}{\chi_0(w)-1}\right)=v_p((1+\alpha)^{p^s}-1)-v_p(\alpha)=v_p(p^s)=s.
\]
Now suppose $p=2$ and $\chi_0(w)\in 1+2\Z_2$ but $\chi_0(w)\not\in 1+4\Z_2$. Then one may write  $\chi_0(w)=1+2\alpha$ for some $\alpha\in \Z_2^\times$. One has
\[
\begin{aligned}
v_2\left(\frac{\chi_0(w)^{p^s}-1}{\chi_0(w)-1}\right)&= v_2((1+2\alpha)^{2^s}-1) -v_2(2\alpha)\\
&=v_2((1+4\alpha+4\alpha^2)^{2^{s-1}}-1)-1\\
&=v_2(4\alpha+4\alpha^2)+v_2(2^{s-1}) -1\\
&\geq 2+(s-1)-1=s,
\end{aligned}
\]
and hence $\dfrac{\chi_0(w)^{2^s}-1}{\chi_0(w)-1}$ is also divisible by $2^s$.

In all three cases the exponent of $z$ is divisible by $p^s$, thus $(zw)^{p^s}\in Z^{p^s}G_0^{p^s}$.
This yields statement (i).

For arbitrary $z_1,z_2\in Z$ and $w_1,w_2\in G_0$ one has 
\begin{equation}\label{eq:commcalc}
\begin{array}{lll}
[z_1w_1,z_2w_2] &\!\!=\!\!& w_1^{-1}z_1^{-1}w_2^{-1}z_2^{-1}z_1w_1z_2w_2\\
&\!\!=\!\!& [w_1,z_1]z_1^{-1}w_1^{-1}w_2^{-1}z_1z_2^{-1}w_1w_2z_2[z_2,w_2]\quad\mbox{($Z$ is abelian)}\\
&\!\!=\!\!& [w_1,z_1][z_1,w_2w_1][w_1,w_2][w_1w_2,z_2][z_2,w_2].\\
\end{array}
\end{equation}

Since the action of $G_0$ on $Z$ via $\chi_0$ yields $[z,w],[w,z]\in Z^{p^k}$ for all $z\in Z^{p^k}$ and $w\in G_0$, we have $\gamma_2(G)\subseteq Z^{p^k}\cdot\gamma_2(G_0)$.
On the other hand, clearly $\gamma_2(G_0)\subseteq\gamma_2(G)$, and for any $z\in Z$ and $w\in G_0$ such that $\chi_0(w)=1+p^k$
one has $z^{p^k}=[w^{-1},z^{-1}]\in\gamma_2(G)$.
Therefore $\gamma_2(G)= Z^{p^k}\cdot\gamma_2(G_0)$.

Note that $\gamma_n(G_0)\subseteq \mathrm{Ker}(\chi)$ for every $n\geq2$,
so that the action of $\gamma_n(G_0)$ on $Z$ is trivial.

Consequently, for any $n$ and $k\in\N$, $Z^{p^{(n-1)k}}\cdot\gamma_n(G_0) = Z^{p^{(n-1)k}}\times\gamma_n(G_0)$. It follows that for all $z\in Z$, all $w\in G_0$, all $s\in Z^{p^{(n-1)k}}$ and all $g\in \gamma_n(G_0)$
\[
[zw,sg]=[zw,g]g^{-1}[zw,s]g=[zw,g][zwg,s]\in [G,\gamma_n(G_0)]\cdot[G,Z^{p^{(n-1)k}}].
\] 
Statement (ii) follows by induction on $n$, since, assuming $\gamma_n(G)=Z^{p^{(n-1)k}}\cdot \gamma_n(G_0)$, 
\[
\begin{split} \gamma_{n+1}(G) &= \left[G,Z^{p^{(n-1)k}}\cdot\gamma_n(G_0)\right] \\
&= [G,\gamma_n(G_0)]\cdot\left[G,Z^{p^{(n-1)k}}\right]\\
&= [G_0,\gamma_n(G_0)]\cdot(Z^{p^{(n-1)k}})^{p^k}=Z^{p^{nk}}\cdot\gamma_{n+1}(G_0).
\end{split}
\]

Finally, by Equality \eqref{eq:Lazard} one has
\begin{equation}\label{eq:zass for semidirect}
\begin{split}
G_{(n)} &= G^{p^{\lceil\log_p(n)\rceil}}\cdot\prod_{\substack{i\geq2\\ip^h\geq n}}\gamma_i(G)^{p^h} \\
 &= \left(Z^{p^{\lceil\log_p(n)\rceil}}\cdot G_0^{p^{\lceil\log_p(n)\rceil}}\right)\cdot\prod_{\substack{i\geq2\\ip^h\geq n}}\left(Z^{p^{(i-1)k}}\cdot \gamma_i(G_0)\right)^{p^h}.
 \end{split}\end{equation}
 Since $Z^{p^{(i-1)k}}\cdot \gamma_i(G_0)=Z^{p^{(i-1)k}}\times \gamma_i(G_0)$ for every $i\geq2$, one has 
\[\begin{split}
 \prod_{\substack{i\geq2\\ip^h\geq n}}\left(Z^{p^{(i-1)k}}\cdot \gamma_i(G_0)\right)^{p^h} &=
 \prod_{\substack{i\geq2\\ip^h\geq n}}Z^{p^{(i-1)k+h}}\times\prod_{\substack{i\geq2\\ip^h\geq n}}\gamma_i(G_0)^{p^h}\\
 &=Z^{p^\eta}\times \prod_{\substack{i\geq2\\ip^h\geq n}}\gamma_i(G_0)^{p^h},
 \end{split}\]
where $\eta=\min\{(i-1)k+h\mid i\geq2,ip^h\geq n\} $, as $Z^{p^{(i-1)k}}\subseteq Z^{p^\eta}$.
  Moreover, $p^\eta\geq n$ for every $n\geq2$, thus $\eta\geq \lceil\log_p(n)\rceil$, and $Z^{p^\eta}\subseteq Z^{p^{\lceil\log_p(n)\rceil}}$, so that from \eqref{eq:zass for semidirect} one obtains 
 \[\begin{split}
 G_{(n)} &= 
 Z^{p^{\lceil\log_p(n)\rceil}}\cdot G_0^{p^{\lceil\log_p(n)\rceil}}\cdot Z^{p^\eta}\cdot\prod_{\substack{i\geq2\\ip^h\geq n}} \gamma_i(G_0)^{p^h}\\
 &= Z^{p^{\lceil\log_p(n)\rceil}}\cdot Z^{p^\eta}\cdot\left[Z^{p^\eta},G_0^{p^{\lceil\log_p(n)\rceil}}\right]\cdot G_0^{p^{\lceil\log_p(n)\rceil}}\prod_{\substack{i\geq2\\ip^h\geq n}} \gamma_i(G_0)^{p^h}\\
 &=Z^{p^{\lceil\log_p(n)\rceil}}\cdot G_0^{p^{\lceil\log_p(n)\rceil}}\prod_{\substack{i\geq2\\ip^h\geq n}} \gamma_i(G_0)^{p^h},
\end{split}\]
which gives statement (iii).
%
%
%
\end{proof}

\begin{prop}\label{prop:fprod LandU}
Let $(G_0,\chi_0)$, $(G,\chi)$ and $Z$ be as above, and assume $\mathrm{Im}(\chi_0)\subseteq1+4\Z_2$ if $p=2$.
One has an isomorphism of graded $\F_p$-algebras
\begin{equation}\label{eq:fprodU}
\F_p[X]\otimes^1\gr\F_p\dbl G_0\dbr\overset{\cong}{\longrightarrow}\gr\FppG,
\end{equation}
with $X=\{X_1,\ldots,X_m\}$.
\end{prop}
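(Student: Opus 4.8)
The plan is to route everything through Jennings's theorem and restricted Lie algebras. By Theorem~\ref{thm:LG grFpG}, all three algebras in sight are universal restricted enveloping algebras: $\gr\FppG=\sU(L(G))$, $\gr\F_p\dbl G_0\dbr=\sU(L(G_0))$, and, applied to $Z=\Z_p^m$, $\gr\F_p\dbl Z\dbr=\sU(L(Z))$. A direct computation with the Magnus isomorphism $\F_p\dbl\Z_p\dbr\cong\F_p\dbl t\dbr$ identifies $\gr\F_p\dbl Z\dbr$ with the commutative polynomial algebra $\F_p[X]$, where $X_i$ is the initial form of a topological generator of the $i$th factor of $Z$. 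Since Proposition~\ref{prop:U prod sum}(ii) turns a direct sum of restricted Lie algebras into the symmetric tensor product of their enveloping algebras, it suffices to establish an isomorphism of graded restricted Lie algebras
\[
L(G)\cong L(Z)\oplus L(G_0).
\]

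First I would record the underlying vector-space decomposition. Lemma~\ref{lem:gamma fprod}(iii) gives $G_{(n)}=Z^{p^{\lceil\log_p n\rceil}}\rtimes(G_0)_{(n)}$, whence $Z\cap G_{(n)}=Z^{p^{\lceil\log_p n\rceil}}=Z_{(n)}$ and $G_0\cap G_{(n)}=(G_0)_{(n)}$; in other words the filtration that $G$ induces on the subgroups $Z$ and $G_0$ is their own Zassenhaus filtration. Because each layer $G_{(n)}/G_{(n+1)}$ is an elementary abelian $p$-group, the short exact sequence extracted from the semidirect product splits over $\F_p$, so $L(G)=L(Z)\oplus L(G_0)$ as graded $\F_p$-vector spaces, with $L(Z)$ and $L(G_0)$ restricted Lie subalgebras (being associated to subgroups).

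The heart of the argument is the vanishing of the cross-brackets, i.e.\ $[L(Z),L(G_0)]=0$ in $L(G)$, equivalently $[Z\cap G_{(i)},(G_0)_{(j)}]\subseteq G_{(i+j+1)}$ for all $i,j\geq1$. Here I would use the cyclotomic action: for $z\in Z$ and $w\in G_0$ one has $w^{-1}zw=z^{\chi_0(w)^{-1}}$, hence $[z,w]=z^{\chi_0(w)^{-1}-1}$. Functoriality of the Zassenhaus filtration applied to $\chi_0\colon G_0\to 1+p^k\Z_p\cong\Z_p$ yields, for $w\in(G_0)_{(j)}$, the inclusion $\chi_0(w)\in 1+p^{\,k+\lceil\log_p j\rceil}\Z_p$, so that $\chi_0(w)^{-1}-1\in p^{\,k+\lceil\log_p j\rceil}\Z_p$; combined with $z\in Z^{p^{\lceil\log_p i\rceil}}$ this forces $[z,w]\in Z^{p^{\,\lceil\log_p i\rceil+k+\lceil\log_p j\rceil}}$. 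It then remains to verify the numerical inequality $\lceil\log_p i\rceil+\lceil\log_p j\rceil+k\geq\lceil\log_p(i+j+1)\rceil$, which follows from $p^{\,\lceil\log_p i\rceil+\lceil\log_p j\rceil+k}\geq ij\,p^k\geq i+j+1$, the last step using $k\geq1$ for $p$ odd and $k\geq2$ for $p=2$ (the latter being exactly the constraint $\im\chi_0\subseteq 1+4\Z_2$ imposed by Hypothesis~\ref{ass}). This places $[z,w]$ in $Z^{p^{\lceil\log_p(i+j+1)\rceil}}=Z\cap G_{(i+j+1)}\subseteq G_{(i+j+1)}$, as needed.

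With the restricted Lie decomposition established, Proposition~\ref{prop:U prod sum}(ii) gives $\sU(L(G))\cong\sU(L(Z))\otimes^1\sU(L(G_0))$, and substituting $\sU(L(Z))\cong\F_p[X]$ and $\sU(L(G_0))\cong\gr\F_p\dbl G_0\dbr$ yields $\gr\FppG\cong\F_p[X]\otimes^1\gr\F_p\dbl G_0\dbr$. I expect the main obstacle to be precisely the vanishing of the cross-brackets: it is the only place where the cyclotomic action and the exact size of $\im\chi_0$ enter, and the degree bookkeeping through the ceilings $\lceil\log_p(\cdot)\rceil$, together with the $p=2$ edge case, must be handled with care.
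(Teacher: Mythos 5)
Your proof is correct and takes essentially the same route as the paper: by Jennings's theorem everything reduces to the decomposition $L(G)\cong L(Z)\oplus L(G_0)$ of graded restricted Lie algebras, after which Proposition~\ref{prop:U prod sum}(ii) and the identification $\sU(L(Z))\cong\F_p[X]$ give the isomorphism. The only difference is one of detail: the paper deduces the direct-sum decomposition from the layer computation of Lemma~\ref{lem:gamma fprod}(iii) without spelling out the vanishing of the cross-brackets, whereas you verify it explicitly via the cyclotomic action and the estimate $ij\,p^k\geq i+j+1$ --- a worthwhile addition, since that is precisely where the constraint $\im\chi_0\subseteq 1+4\Z_2$ for $p=2$ (coming from Hypothesis~\ref{ass}) enters.
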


\begin{proof}
Since $Z$ is a free abelian pro-$p$ group of rank $m$,
the restricted Lie algebra $L(Z)$ is a free abelian restricted Lie algebra on $m$ generators.
In particular, one has $L_n(Z)$ trivial if $n$ is not a power of $p$, whereas $L_{p^h}(Z)=Z^{p^h}/Z^{p^{h+1}}$ for $h\geq0$.
Thus, $\sU(L(Z))$ is isomorphic to the commutative polynomial algebra $\F_p[X]$, with $X=\{X_1,\ldots,X_m\}$.

For $n\geq1$, Lemma~\ref{lem:gamma fprod} implies that
\[     \frac{G_{(n)}}{G_{(n+1)}} =
\frac{Z^{p^{\lceil\log_p(n)\rceil}}\rtimes (G_0)_{(n)}}{Z^{p^{\lceil\log_p(n+1)\rceil}}\rtimes (G_0)_{(n+1)}}
=\begin{cases}
  \dfrac{Z^{p^h}}{Z^{p^{h+1}}}\times \dfrac{(G_0)_{(n)}}{(G_0)_{(n+1)}}  & \text{for }n=p^h, \\
   \dfrac{(G_0)_{(n)}}{(G_0)_{(n+1)}} & \text{otherwise}.
                            \end{cases}   \]
Moreover for every $z\in Z_{(i)}$ and $w\in (G_0)_{(j)}$ one has $[z,w]\in Z_{(n)}$ with $n>i+j$ because of the action
induced by $\chi_0$, so that in $L(G)$ one has $[\bar z,\bar w]=0$ (Lie bracket).
Therefore, the restricted Lie algebra $L(G)$ is the direct sum $L(Z)\oplus L(G_0)$ of restricted Lie algebras.
Hence, 
\[\begin{split}
   \gr\FppG &=\sU(L(Z)\oplus L(G_0))\\ &\cong \sU(L(Z))\otimes^1\sU(L(G_0))=\F_p[X]\otimes^1\gr\F_p\dbl G_0\dbr
  \end{split}\]
by Proposition~\ref{prop:U prod sum}.
\end{proof}

Summing up, Propositions~\ref{prop:fibreprod cohomology} and \ref{prop:fprod LandU}, together with Remark~\ref{rem:AJt}, the isomorphism~\eqref{eq:fibreprodHt0}, and Example~\ref{ex:PBW}, imply the following.

\begin{thm}\label{thm:fibreprod all}
 Let $(G_0,\chi_0)$ be a finitely generated cyclotomic pair (satisfying $\mathrm{Im}(\chi_0)\subseteq1+4\Z_2$ if $p=2$) and
 $(G,\chi)=(\Z_p^m\rtimes G_0, \chi_0\circ\pi)$ for some positive integer $m$.
 Assume that $H^\bullet(G_0,\F_p)$ is a quadratic $\F_p$-algebra.
\begin{itemize}
 \item[(i)] If $H^\bullet(G_0,\F_p)^!\cong\gr\F_p\dbl G_0\dbr$,
 then $H^\bullet(G,\F_p)^!\cong \gr\FppG$.
 \item[(ii)] If $H^\bullet(G_0,\F_p)$, resp.~$\gr\F_p\dbl G_0\dbr$, is a PBW algebra,
then also $H^\bullet(G,\F_p)$, resp.~$\gr\FppG$, is PBW.
\end{itemize}
\end{thm}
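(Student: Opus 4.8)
Let $(G_0,\chi_0)$ be a finitely generated cyclotomic pair and $(G,\chi)=(\Z_p^m\rtimes G_0,\chi_0\circ\pi)$. Assume $H^\bullet(G_0,\F_p)$ is quadratic. Then (i) if $H^\bullet(G_0,\F_p)^!\cong\gr\F_p\dbl G_0\dbr$, then $H^\bullet(G,\F_p)^!\cong\gr\FppG$; and (ii) PBW-ness of $H^\bullet(G_0,\F_p)$, resp.\ $\gr\F_p\dbl G_0\dbr$, is inherited by $H^\bullet(G,\F_p)$, resp.\ $\gr\FppG$.

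The plan is to read off both sides of the claimed duality from the two structural computations already at hand, and then to invoke the formal properties of the quadratic dual and of the PBW property. First I would apply Proposition~\ref{prop:fibreprod cohomology}, which identifies $H^\bullet(G,\F_p)$ with the twisted extension $H^\bullet(G_0,\F_p)[J;t]$ of Definition~\ref{def:wadsworth extension}, with $J=\{1,\dots,m\}$; in particular this already yields that $H^\bullet(G,\F_p)$ is quadratic. In the situations governed by Hypothesis~\ref{ass} (and, more generally, for every pair in $\mathcal{E}_p$) the distinguished element $t$ vanishes, so the twisted extension collapses to the skew-symmetric tensor product
\[
H^\bullet(G,\F_p)\cong\Lambda_\bullet(V)\otimes^{-1}H^\bullet(G_0,\F_p),\qquad V\cong\F_p^m.
\]
On the other side, Proposition~\ref{prop:fprod LandU} gives $\gr\FppG\cong\F_p[X]\otimes^1\gr\F_p\dbl G_0\dbr=\mathcal{S}_\bullet(V)\otimes^1\gr\F_p\dbl G_0\dbr$, with $X=\{X_1,\dots,X_m\}$.

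For part~(i) I would simply apply the quadratic dual functor to the cohomology description and chase it through the De Morgan dualities of Remark~\ref{rem:de Morgan}. Concretely, $(\Lambda_\bullet(V)\otimes^{-1}H^\bullet(G_0,\F_p))^!\cong\Lambda_\bullet(V)^!\otimes^1 H^\bullet(G_0,\F_p)^!$; here $\Lambda_\bullet(V)^!=\mathcal{S}_\bullet(V^*)=\F_p[X]$ by the symmetric--exterior duality of Example~\ref{ex:quadratic}(b), and $H^\bullet(G_0,\F_p)^!\cong\gr\F_p\dbl G_0\dbr$ is exactly the standing hypothesis of~(i). Comparing with Proposition~\ref{prop:fprod LandU} gives $H^\bullet(G,\F_p)^!\cong\F_p[X]\otimes^1\gr\F_p\dbl G_0\dbr\cong\gr\FppG$, as desired.

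Part~(ii) is then immediate from the same two tensor decompositions together with the closure properties of PBW algebras. The exterior algebra $\Lambda_\bullet(V)$ and the symmetric algebra $\mathcal{S}_\bullet(V)=\F_p[X]$ are PBW (Example~\ref{ex:PBW}(2)), and by Example~\ref{ex:PBW}(3) both $\otimes^{-1}$ and $\otimes^1$ send pairs of PBW algebras to PBW algebras, a set of PBW generators being the union of the two given ones. Hence PBW-ness of $H^\bullet(G_0,\F_p)$ passes to $H^\bullet(G,\F_p)=\Lambda_\bullet(V)\otimes^{-1}H^\bullet(G_0,\F_p)$, and PBW-ness of $\gr\F_p\dbl G_0\dbr$ passes to $\gr\FppG=\F_p[X]\otimes^1\gr\F_p\dbl G_0\dbr$.

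The one genuinely delicate point --- and the step I expect to be the main obstacle --- is the role of the element $t$. The clean tensor-product shape of $H^\bullet(G,\F_p)$, and with it the duality in~(i), depend on $t=0$; the filtration side $\gr\FppG$ computed in Proposition~\ref{prop:fprod LandU} is insensitive to $t$ (equivalently to the exponent $k$), so for $t\neq0$ the two algebras are simply not quadratic dual and part~(i) must be confined to the oriented/$\mathcal{E}_p$ setting. For the PBW half of~(ii) in the general case $t\neq0$ (which is what is needed, e.g., for the Pythagorean semidirect products of Theorem~\ref{thm:PFR}, where $\sqrt{-1}\notin F$), the twisted extension $H^\bullet(G_0,\F_p)[J;t]$ is no longer a skew tensor product, and I would instead establish its PBW-ness directly via the Rewriting Method: order the $x_j$ above the generators of $H^\bullet(G_0,\F_p)$, read the relator $x_j^2-tx_j$ as the rule $x_j^2\rightsquigarrow tx_j$, and check confluence of the resulting critical monomials (such as $x_jx_jx_k$ and $x_jx_ja$), using the graded-commutativity of $H^\bullet(G_0,\F_p)$ to reconcile the two normal forms.
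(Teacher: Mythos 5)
Your proof is correct and takes essentially the same route as the paper, whose entire argument for this theorem is the one-line observation that Propositions \ref{prop:fibreprod cohomology} and \ref{prop:fprod LandU}, combined with the De Morgan dualities of Remark \ref{rem:de Morgan} and the PBW closure properties of Example \ref{ex:PBW}, yield both statements. Your closing caveat about $t$ is also consonant with the paper's implicit scope: the collapse to $\Lambda_\bullet(V)\otimes^{-1}H^\bullet(G_0,\F_p)$ is exactly the remark following Proposition \ref{prop:fibreprod cohomology} (and Proposition \ref{prop:fprod LandU} already carries the standing assumption $\im\chi_0=1+p^k\Z_p$, which in the cases of interest rules out $t\neq 0$), while the rewriting-method argument you sketch for the twisted extension with $t\neq0$ is precisely the proposition the paper proves separately in Section \ref{sec:D} for Theorem D.
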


In fact, it is also true that a general twisted extension --- that is, with $t$ not necessarily $0$ (and thus with possibly $\{\pm1\}\subseteq\mathrm{Im}(\chi)$, in the setting of cyclotomic pro-$p$ pairs of elementary tipe) --- of a PBW $\F_2$-algebra is PBW, as shown by the following result of independent interest.

\begin{prop}\label{prop:twisted extension PBW}
Let $A_\bullet$ be a graded-commutative quadratic $\F_2$-algebra endowed with space of generators $V=\Span_{\F_2}\{t,a_1,\dots,a_d\}$ and space of relators $\Omega$, let $J=\{1,\dots,m\}\subset\N$ and let $\{x_j\mid j\in J\}$ be a set of distinct symbols not in $A$. Suppose that $\{t,a_1,\dots,a_d\}$ is a set of PBW generators of $A_\bullet$ with respect to the degree-lexicographic order associated to
$
t\preceq_1a_1\preceq_1\dots\preceq_1a_d
$.
Then $\{t,a_1,\dots,a_d, x_1,\dots,x_m\}\!$ is a set of PBW generators of the twisted extension $A[J;t]_{\!\bullet}$ with respect to the degree-lexicographic order associated to
\[
t\preceq_1a_1\preceq_1\dots\preceq_1a_d\preceq_1x_1\preceq_1\dots\preceq_1x_m.
\]
\end{prop}
\begin{proof}
We can always choose a normalized basis of $\Omega$ containing all the commutativity relators $a_it+ta_i$ and $a_ja_i+a_ia_j$ ($1\leq i<j\leq d$), plus possibly other relators. The rewriting rules in $A[J;t]_\bullet$ are those in $A_\bullet$ and (since $-1=1$)
\[
\begin{array}{ll}
x_ja_k\rightsquigarrow a_kx_j & (j\in J, k\in\{1,\dots,d\});\\
x_jt\rightsquigarrow tx_j & (j\in J);\\
x_jx_i\rightsquigarrow x_ix_j & (i,j\in J, i<j);\\
x_j^2\rightsquigarrow tx_j & (j\in J).
\end{array}
\]
The critical monomials in $A[J;t]_\bullet$ are those in $A_\bullet$ and
\begin{enumerate}
\item $x_j^3$;
\item $x_j^2x_i\; (i<j)$;
\item $x_jx_i^2\; (i<j)$;
\item $x_jx_ix_h\;(h<i<j)$;
\item $x_j^2t$;
\item $x_j^2a_k$;
\item $x_jx_it\; (i<j)$;
\item $x_jx_ia_k\; (i<j)$;
\item $x_jb_1b_2$ ($b_1b_2$ a leading monomial in $\Omega$).
\end{enumerate}

The critical monomials in $A_\bullet$ are confluent by hypothesis. The rewriting graphs of the new ones follow:

\
\hspace*{-\parindent}
\begin{minipage}{\textwidth}
\begin{multicols}{2}
\begin{description}
\item[Type (1)] \hskip-.5em$\xymatrix@R-14pt@C-30pt{&x_jx_jx_j\ar[ddl]\ar[dr]&\\
&&x_jtx_j\ar[dll]\\
tx_jx_j\ar[d]&&\\
ttx_j\ar@{.>}[d]&&\\
0}$
\item[Type (2)] \hskip-.5em$\xymatrix@R-14pt@C-30pt{&x_jx_jx_i\ar[dl]\ar[dr]&\\
tx_jx_i\ar[dddr] && x_jx_ix_j\ar[d]\\
&&x_ix_jx_j\ar[d]\\
&&x_itx_j\ar[dl]\\
& tx_ix_j &}$
\end{description}
\end{multicols}
\vspace*{.6\baselineskip}
\begin{multicols}{2}
\begin{description}
\item[Type (3)] \hskip-.5em$\xymatrix@R-14pt@C-30pt{&x_jx_ix_i\ar[dl]\ar[dr]&\\
x_ix_jx_i\ar[d] && x_jtx_i\ar[d]\\
x_ix_ix_j\ar[dr] && tx_jx_i\ar[dl]\\
&tx_ix_j&}$
\item[Type (4)] \hskip-.5em$\xymatrix@R-14pt@C-30pt{&x_jx_ix_h\ar[dl]\ar[dr]&\\
x_ix_jx_h\ar[d] && x_jx_hx_i\ar[d]\\
x_ix_hx_j\ar[dr] && x_hx_jx_i\ar[dl]\\
&x_hx_ix_j&}$
\end{description}
\end{multicols}
\vspace*{.6\baselineskip}
\begin{multicols}{2}
\begin{description}
\item[Type (5)] \hskip-.5em$\xymatrix@R-14pt@C-30pt{&x_jx_jt\ar[dl]\ar[dr]&\\
tx_jt\ar[ddr] && x_jtx_j\ar[d]\\
&& tx_jx_j\ar[dl]\\
&ttx_j\ar@{.>}[d]&\\
&0&}$
\item[Type (6)] \hskip-.5em$\xymatrix@R-14pt@C-30pt{&x_jx_ja_k\ar[dl]\ar[dr]&\\
tx_ja_k\ar[dddr] && x_ja_kx_j\ar[d]\\
&&a_kx_jx_j\ar[d]\\
&&a_ktx_j\ar^-{(\Omega)}[dl]\\
& ta_kx_j &}$
\end{description}
\end{multicols}
\vspace*{.6\baselineskip}
\begin{multicols}{2}
\begin{description}
\item[Type (7)] \hskip-.5em$\xymatrix@R-14pt@C-30pt{&x_jx_it\ar[dl]\ar[dr]&\\
x_ix_jt\ar[d] && x_jtx_i\ar[d]\\
x_itx_j\ar[dr] && tx_jx_i\ar[dl]\\
&tx_ix_j&}$
\item[Type (8)] \hskip-.5em$\xymatrix@R-14pt@C-30pt{&x_jx_ia_k\ar[dl]\ar[dr]&\\
x_ix_ja_k\ar[d] && x_ja_kx_i\ar[d]\\
x_ia_kx_j\ar[dr] && a_kx_jx_i\ar[dl]\\
&a_kx_ix_j&}$
\end{description}
\end{multicols}
\vspace*{.6\baselineskip}
\begin{multicols}{2}
\begin{description}
\item[Type (9)] \hskip-18pt$\xymatrix@R-14pt@C-40pt{&x_jb_1b_2\ar[dl]\ar^-{(\Omega)}[dr]&\\
b_1x_jb_2\ar[d] && x_j(\sum p_rq_r)\ar[d]\\
b_1b_2x_j\ar^-{(\Omega)}[dr] && \sum p_rx_jq_r\ar[dl]\\
&(\sum p_rq_r)x_j&}$
\end{description}
\end{multicols}
\end{minipage}

\pagebreak
The dotted arrows in Types (1) and (5) denote a further reduction in case $tt\in \Omega$. The label $(\Omega)$ in Types (7) and (9) denotes the application of a rewriting rule in $A_\bullet$. In particular, in (7) $a_kt\rightsquigarrow ta_k$ is always a rewriting rule according to our choice of a normalized basis of $\Omega$. In (9), the rewriting rule associated to the leading monomial $b_1b_2$ is supposed to be $b_1b_2\rightsquigarrow\sum p_rq_r$; here the possibility of a monomial rewriting rule $b_1b_2\rightsquigarrow 0$ is not excluded: in this case the graph can be simplified, but confluency still holds.
\end{proof}

 Theorem~\ref{thm:fibreprod all}~(ii) and Proposition~\ref{prop:twisted extension PBW}, together with Proposition~\ref{prop:fibreprod cohomology}, yield Theorem~A for cyclotomic semidirect products of cyclotomic pro-$p$ pairs of elementary type.


%

\section{Proof of Theorem D}\label{sec:D}

In view of Theorems~A and B, in order to prove Theorem~D it suffices to show that
the maximal pro-$p$ Galois group $G_F(p)$, together with the cyclotomic character $\chi_p$, is a cyclotomic pro-$p$
pair of elementary type, for any field $F$ as in the statement of Theorem~D.

First, with deal with the cases (a)--(d) and (f) of the theorem,
then we give a more detailed treatment for $p$-rigid fields and Pythagorean formally real fields.

\begin{rem}
 By Kummer theory one has an isomorphism of discrete $\F_p$-vector spaces $F^\times/(F^\times)^p\cong H^1(G_F(p),\F_p)$
  (see, e.g., \cite[Theorem~6.2 and proof]{koch}, see also \cite[Chapter~6, Theorem~3]{artintate} for a description of infinite Kummer theory),
and thus $G_F(p)$ is finitely generated if and only if the quotient $F^\times/(F^\times)^p$ is finite.
\end{rem}

\begin{prop}
Let $F$ be a field as in items (a) or (b) of Theorem~D.
Then $(G_F(p),\chi_p)$ is a cyclotomic pro-$p$ pair of elementary type. 
\end{prop}

\begin{proof}
In the cases when $F$ is a finite field, or $F$ is PAC field one knows that for every finite Galois extension $K$ of $F$ the norm map $K^*\to F^*$ is surjective. Since this condition implies that the Brauer group of each finite Galois extension of $F$ is trivial, $H^2(G_F(p),\F_p) =0$, i.e., $G_F(p)$ is a free pro-$p$ group, and hence $(G_F(p),\chi_p)$ is of elementary type.

For finite fields this is the well-known Chevalley Theorem (\cite[Theorem~25]{shatz}). 
If $F$ is PAC then this follows from \cite[Corollary~11.6.8]{friedjarden}. Note that an algebraic extension of a PAC field is again PAC (cf.~\cite[Corollary~11.2.5]{friedjarden}).

 Finally, if $F$ is an extension of a PAC field of relative trascendence degree 1, then the cyclotomic pro-$p$ pair $(G_F(p),\chi_p)$ is of elementary type by \cite[Corollary~5.7]{efrat:PAC}.
\end{proof}

\begin{prop}
Let $F$ be a field as in cases (c), (d) or (f) of Theorem~D.
Then $(G_F(p),\chi_p)$ is a cyclotomic pro-$p$ pair of elementary type. 
\end{prop}

\begin{proof}
For $F$ a field of the type (c), the claim follows from \cite[Main Theorem]{efrat:Q}.

If $F$ is an extension of trascendence degree 1 of a non-Archimedean local field, the claim follows from \cite[Theorem~6.6 and Theorem~6.10]{efrat:local}. 
If $F=\C(t)$ then $G_F(p)$ is a free pro-$p$ group by Tsen's Theorem (cf.~\cite[Corollary on p.~109]{shatz}). If $F=\R(t)$, then  $G_F(p)$ is real-free (cf.~\cite{haranjarden,haranjarden2}), so that $G_F(p)$ is free for $p$ odd, and $(G_F(2),\chi_2)$ is of elementary type.

Finally, for $F$ a field of the type (f) and $p=2$, the claim follows from \cite{KK:globalfields}.
See also \cite[\S~10]{marshall:ETC}.
\end{proof}

\subsection{Rigid fields} \label{ssec:prigid}

For a field $F$ and an odd prime $p$, let $\sqrt[p^\infty]{\K}$ denote the extension of $\K$ obtained by adjoining the elements $\sqrt[p^k]{a}$ for all $k\geq1$ and $a\in\K$. A field $F$ satisfying the Standing Hypothesis~\ref{ass} and such that $\K^\times/(\K^\times)^p$ is finite is said to be \emph{$p$-rigid} if $\K(p)=\sqrt[p^\infty]{\K}$ (cf.~\cite[\S~4]{cmq:fast}).
 In particular, the following three statements are equivalent (cf.~\cite[\S~3]{cmq:fast}):
 \begin{itemize}
  \item[(i)] $\K$ is $p$-rigid;
  \item[(ii)] $(G_{\K}(p),\chi_{\K,p})\cong\Z_p^m\rtimes(G_0,\chi_0)$ for some $m\geq0$
and some cyclotomic pair $(G_0,\chi_0)$ such that either $G_0\cong \Z_p$ and $\chi_0$ is injective,
or $(G_0,\chi_0)$ is trivial;
  \item[(iii)] $H^\bullet(G_{\K}(p),\F_p)\cong\Lambda_\bullet(H^1(G_{\K}(p),\F_p))$.
 \end{itemize}
 Then, by Proposition~\ref{prop:fprod LandU}, one deduces from the characterization (ii) above that, for $\K$ $p$-rigid,
\[ \gr\F_p\dbl G_{\K}(p)\dbr\cong\F_p[X]\otimes^1\gr\F_p\dbl G_0\dbr\cong\begin{cases}
                                                        \F_p[X]\otimes^1\gr\F_p[X_0]=\F_p[Y], &\text{or} \\ \F_p[X]                
                                                                       \end{cases}\]
with $X=\{X_1,\ldots,X_d\}$ for some $d\geq1$, $X_0$ a single variable, and $Y=X\cup\{X_0\}$ That is, $\gr\F_p\dbl G_{\K}(p)\dbr$ is a free abelian graded algebra.

Therefore, both algebras $H^\bullet(G_{\K}(p),\F_p)$ and $\gr\F_p\dbl G_{\K}(p)\dbr$ are PBW by Example~\ref{ex:PBW}~(b), and they are quadratically dual to each other by Example~\ref{ex:quadratic2}~(b).
This proves Theorem~D~(e).

Moreover, one has the following.

\begin{coro}\label{coro:rigid gr}
 Let $p$ an odd prime number and let $\K$ be a field containing a primitive $p$\textsuperscript{th} root of unity and such that $\K^\times/(\K^\times)^p$ is finite.
 Then $\K$ is $p$-rigid if, and only if, $\gr\F_p\dbl G_{\K}(p)\dbr$ is an abelian graded $\F_p$-algebra.
\end{coro}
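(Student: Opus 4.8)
The plan is to treat the two implications separately, the forward one being essentially a repackaging of Example~\ref{ex:rigid fields} and the converse requiring a genuine dimension count. Throughout I write $G=G_{\K}(p)$ and $d=\dd(G)=\dim_{\F_p}H^1(G,\F_p)$; since $\K$ contains a primitive $p$\textsuperscript{th} root of unity, Hypothesis~\ref{ass} holds and Bloch--Kato guarantees that $H^\bullet(G,\F_p)$ is quadratic, say $H^\bullet(G,\F_p)=Q(V,\Omega)$ with $V=H^1(G,\F_p)$.

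For the implication ``$p$-rigid $\Rightarrow$ abelian'' I would invoke the equivalence (i)$\Leftrightarrow$(ii) recorded in Example~\ref{ex:rigid fields}: $p$-rigidity gives $(G,\chi)\cong\Z_p^m\rtimes(G_0,\chi_0)$ with $\chi_0$ injective. For $p$ odd an injective $\chi_0$ forces $G_0$ to embed as a closed subgroup of $U_p^{(1)}\cong\Z_p$, hence $G_0\cong\Z_p$ or $G_0=1$; in either case $\gr\F_p\dbl G_0\dbr=\sU(L(G_0))\cong\F_p[X_0]$. Proposition~\ref{prop:fprod LandU} then yields $\gr\FppG\cong\F_p[X]\otimes^1\F_p[X_0]$, a commutative polynomial algebra — exactly the conclusion $\gr\FppG\cong\F_p[X]$ already stated in the Example.

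The interesting direction is the converse. Assume $\gr\FppG$ is abelian. The key preliminary observation is that $\gr\FppG$ is \emph{always} generated in degree $1$ by $d$ elements, because $\widehat{I_G}\!\ ^n$ is by definition the $n$-fold ideal product of $\widehat{I_G}$. Consequently commutativity together with $p\neq2$ — which prevents the restricted $p$-operation from landing in degree $2$ — shows that $(\gr\FppG)_2$ is spanned by the symmetric products $X_iX_j$ with $i\le j$, so that $\dim_{\F_p}(\gr\FppG)_2\le\binom{d+1}{2}$. On the other hand, graded-commutativity of the cup product for $p$ odd provides the epimorphism $\Lambda_\bullet(V)\twoheadrightarrow H^\bullet(G,\F_p)$ recalled in Subsection~\ref{ssec:cohom of pro-p}, so the relator space $\Omega$ contains the exterior relators $\Span_{\F_p}\{x_i^2,\,x_ix_j+x_jx_i\}$ of dimension $\binom{d+1}{2}$, whence $\dim_{\F_p}\Omega\ge\binom{d+1}{2}$. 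Finally Theorem~F gives the degree-$2$ isomorphism $H^\bullet(G,\F_p)^!\cong\gr\FppG$, and since $\dim_{\F_p}(H^\bullet(G,\F_p)^!)_2=\dim_{\F_p}\big((V^*\otimes V^*)/\Omega^\perp\big)=\dim_{\F_p}\Omega$, this identifies $\dim_{\F_p}(\gr\FppG)_2=\dim_{\F_p}\Omega$. The two inequalities now squeeze everything to equality, forcing $\Omega$ to coincide with the exterior relator space and hence $H^\bullet(G,\F_p)\cong\Lambda_\bullet(V)$. This is condition (iii) of Example~\ref{ex:rigid fields}, so $\K$ is $p$-rigid.

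The point requiring care — rather than a true obstacle — is the bookkeeping in the converse: one must verify that Theorem~F is applicable (which needs the Bloch--Kato quadraticity of $H^\bullet(G,\F_p)$) and that the degree-$2$ part of $\gr\FppG$ receives no contribution from the $p$-operation. It is exactly here that the hypothesis $p$ odd is indispensable, since for $p=2$ the operation $\slot^{[2]}$ sends degree $1$ into degree $2$ and the relevant commutative model would be the symmetric rather than the exterior algebra. Apart from this, the argument reduces to the linear-algebra squeeze $\binom{d+1}{2}\ge\dim_{\F_p}(\gr\FppG)_2=\dim_{\F_p}\Omega\ge\binom{d+1}{2}$ once Theorem~F is in hand.
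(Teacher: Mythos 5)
Your argument is correct, but your converse direction takes a genuinely different route from the paper's (the forward direction, via Example~\ref{ex:rigid fields} and Proposition~\ref{prop:fprod LandU}, is the same in both). Writing $G=G_{\K}(p)$ and $d=\dim_{\F_p}H^1(G,\F_p)$: where you dualize and count dimensions, the paper stays on the Lie side. If $\gr\F_p\dbl G\dbr$ is abelian, then $L(G)$ is an abelian restricted Lie algebra, so the commutator map $G/G_{(2)}\times G/G_{(2)}\to G_{(2)}/G_{(3)}$ is zero; for $p$ odd this map is surjective (since $G^p\subseteq G_{(p)}\subseteq G_{(3)}$, the quotient $G_{(2)}/G_{(3)}$ is generated by commutators), whence $G_{(2)}/G_{(3)}=0$. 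Since by \cite[page 268]{Hill} the $\F_p$-dual of $G_{(2)}/G_{(3)}$ is $\ker\bigl(\cup\colon H^1(G,\F_p)\wedge H^1(G,\F_p)\to H^2(G,\F_p)\bigr)$, the cup product is injective on $\Lambda_2(V)$ and condition (iii) of Example~\ref{ex:rigid fields} follows at once --- no appeal to Theorem~F and no dimension count. The two proofs are essentially dual to each other: the restricted PBW theorem applied to $\sU(L(G))=\gr\F_p\dbl G\dbr$ gives, for $p$ odd, $\dim_{\F_p}(\gr\F_p\dbl G\dbr)_2=\binom{d+1}{2}+\dim_{\F_p}G_{(2)}/G_{(3)}$, so your upper bound $\dim_{\F_p}(\gr\F_p\dbl G\dbr)_2\le\binom{d+1}{2}$ is precisely the vanishing the paper extracts, and the degree-$2$ isomorphism of Proposition~\ref{prop:quadcover} powering your squeeze is the paper-internal avatar of Hill's duality. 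What your route buys is self-containedness (every ingredient is quoted from the paper itself, with the external reference \cite{Hill} replaced by Theorem~F) and an explicit identification of $\Omega$ as exactly the exterior relator space; what it costs is the Section~\ref{sec:pairings and duals} machinery behind Theorem~F, which the paper's two-line Lie-theoretic argument avoids. One small bookkeeping correction: the spanning of $(\gr\F_p\dbl G\dbr)_2$ by the products $X_iX_j$ needs only that the algebra is generated in degree $1$, which holds for \emph{every} $p$ --- inside $\sU(L(G))$ the $p$-operation is realized by the associative $p$\textsuperscript{th} power, so even for $p=2$ it creates no degree-$2$ elements beyond products; the hypothesis that $p$ is odd is genuinely used only in your lower bound $\dim_{\F_p}\Omega\ge\binom{d+1}{2}$, where $\chi\cup\chi=0$ requires $2$ to be invertible (for $p=2$ only the $\binom{d}{2}$ commutation relators are guaranteed, and the squeeze breaks exactly as you predict).
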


\begin{proof}
We have already observed that if $\K$ is $p$-rigid, then $\gr\F_p\dbl G_{\K}(p)\dbr$ is isomorphic to a symmetric algebra. For the converse, if $\gr\F_p\dbl G_{\K}(p)\dbr$ is abelian, then $L(G_\K(p))$ is an abelian restricted Lie algebra.
In particular, the commutator map
\[
\xymatrix{[\slot,\slot]\colon\dfrac{{G_\K(p)}}{{G_\K(p)}_{(2)}}\times\dfrac{{G_\K(p)}}{{G_\K(p)}_{(2)}}\ar@{->>}[r]
&\dfrac{{G_\K(p)}_{(2)}}{{G_\K(p)}_{(3)}}},\]
which is surjective since $p\neq2$ and thus $G_F(p)^p\subseteq G_{F}(p)_{(3)}$, is the zero map. Hence ${{G_\K(p)}_{(2)}}/{{G_\K(p)}_{(3)}}=0$.
Since the $\F_p$-dual of ${{G_\K(p)}_{(2)}}/{{G_\K(p)}_{(3)}}$ is the kernel
of the map 
\begin{equation}\label{eq:cuprigid2}
 H^1({G_\K(p)},\F_p)\wedge H^1({G_\K(p)},\F_p)\longrightarrow H^2({G_\K(p)},\F_p)
\end{equation}
induced by the cup product (cf.~\cite[Example~4.2~(1)]{EM2}),
we have that such kernel is trivial, namely, \eqref{eq:cuprigid2} is an isomorphism.
Since $H^\bullet(G_F(p),\F_p)$ is quadratic, this isomorphism in degree two implies that $H^\bullet(G_F(p),\F_p)\cong \Lambda_\bullet(H^1(G_{\K}(p),\F_p))$,
so the assertion follows by (iii) in the above characterization of $p$-rigid fields.
\end{proof}


\subsection{Pythagorean formally real fields}\label{ssec:pita}
In this subsection we assume that fields contain a primitive $2$\textsuperscript{nd} root of $1$, which is equivalent to having characteristic different from $2$.

Recall that a field $F$ is said to be \emph{Pythagorean} if $F^2+F^2=F^2$.
The family $\mathcal{PFR}$ of Pythagorean formally real fields with finitely many square classes consists of those Pythagorean fields $F$ such that $-1$ is not a square in $F$ and $|F^\times/(F^\times)^2|<\infty$.
Particular elements of $\mathcal{PFR}$ are \emph{Euclidean} fields, that is fields $F$ such that 
$$(F^\times)^2+(F^\times)^2=(F^\times)^2\not\ni-1\qquad \text{and} \qquad F^\times=(F^\times)^2\cup-(F^\times)^2$$ (cf.~\cite{becker}): for any Euclidean field $F$ (for instance, $F=\R$), one has $G_F(2)=\Z/2\Z$, with $\mathrm{Im}(\chi_2)=\{\pm1\}$.

Pythagorean fields $F$ which are not formally real are uninteresting from our point of view, as for such fields one has $F^\times=(F^\times)^2$ (cf.~\cite[p.~255]{lam}), and so $G_F(2)=\{1\}$.
This establishes trivially Theorem~D~(g) for Pythagorean fields which are not formally real.

 By the results in \cite{craven,jacob1}, one may construct the $\F_2$-cohomology algebra $H^\bullet(G_F(2),\F_2)$ of the maximal pro-$2$ Galois group $G_F(2)$ of a Pythagorean formally real field $F$, starting from the $\F_2$-cohomology algebras of $\Z_2$ and of the cyclic group of order $2$, employing direct sums and twisted extensions of quadratic algebras (cf.~Example~\ref{ex:quadratic2}~(a) and Definition~\ref{def:wadsworth extension} respectively).
Moreover, let $\mathcal{EE}_2$ denote the smallest class of cyclotomic pro-$2$ pairs of elementary type containing the pair $(\Z/2\Z,\chi)$, 
with $\mathrm{Im}(\chi)=\{\pm1\}$, and closed with respect to the free products of cyclotomic pairs and the cyclotomic semidirect products with $\Z_2$ --- 
note that for $G\in\mathcal{EE}_2$ and for a finite product $Z=\prod_{i=1}^m\Z_2$ (written multipicatively), the action of $G$ on $Z$, which defines the semidirect product $Z\rtimes G$, is given by
\begin{equation}\label{eq:PythCyclotomicAction}
\sigma^{-1}z\sigma=z^{-1}\qquad \mbox{for any } \sigma\in G\smallsetminus\{1\} \mbox{ such that } \sigma^2=1 \mbox{ and any } z\in Z.
\end{equation}
Observe that the action is analogous to the cyclotomic action mentioned at the beginning of \S~\ref{sec:ET}.
In fact, a property of Pythagorean fields $F$ is that their extension $F(\mu_{2^\infty})$
with all roots of unity of order a power of $2$ coincides with $F(\sqrt{-1})$
(cf. \cite{BEK:orderings,Becker:pythagorean}).
Each $\sigma$ as in \eqref{eq:PythCyclotomicAction} is sent to $-1$ by the $2$-adic cyclotomic character
\[
\chi_2\colon G_{F_0}(2)\to\{-1,+1\}\cong\Gal(F(\sqrt{-1})/F)=\Gal(F(\mu_{2^\infty})/F).
\]
The following characterization of maximal pro-$2$ Galois groups of Pythagorean formally real fields is proved in \cite{minac}.

\begin{thm}\label{thm:PFR}
A cyclotomic pro-2 pair $(G,\chi)$ is in $\mathcal{EE}_2$ if and only if $G$ is realizable as the maximal pro-$2$ Galois group of a field in $\mathcal{PFR}$.
\end{thm}

Using the description of the family $\mathcal{PFR}$ given by Theorem \ref{thm:PFR}, we can take advantage of previous results. In particular, $H^\bullet(\Z/2\Z,\F_2)$ has been studied at the beginning of \S \ref{ssec:demushkin} and the cohomology of free products has been addressed in \S \ref{ssec:freeprod}.
As regards semidirect products, for any field $F_0\in\mathcal{PFR}$ and any finite product $Z=\prod_{i=1}^m\Z_2$, the $\F_2$-cohomology of $Z\rtimes G_{F_0}(2)$ is a twisted extension $H^\bullet(G_{F_0}(2),\F_2)[J,t]$, with $J=\{1,\dots,m\}$ and $t$ corresponding to the square class of $-1$ in Bloch-Kato isomorphism.
Then the proof of Theorem~D~(g) is completed by applying Propositions \ref{prop:fprod LandU} and \ref{prop:twisted extension PBW}.

\section{Proof of Theorems E and F}\label{sec:pairings and duals}
Most results of this section are valid in general, but for simplicity and coherence with the rest of the paper we assume groups to be finitely generated.

Let $G$ be a finitely generated pro-$p$ group.
A \emph{minimal presentation} of $G$ is a short exact sequence of pro-$p$ groups
\begin{equation}\label{eq:presentation}
\xymatrix{ 1\ar[r] & R\ar[r] & S\ar[r] & G\ar[r] & 1,} 
\end{equation}
where $S$ is a free pro-$p$ group and $R\subseteq S_{(2)}$, that is, the epimorphism
$S\to G$ induces an isomorphism $S/S_{(2)}\cong G/G_{(2)}$.
For such a presentation, a \emph{set of defining relations} is a minimal subset of $R$ which generates $R$
as a normal subgroup of $S$.
If $S$ is free on the set $\{x_1,\dots,x_d\}$ and $\{r_1,\dots,r_m\}$ is a set of defining relations, we also write
\begin{equation}\label{eq:gen rel pres}
G=\gen{x_1,\dots,x_d\mid r_1,\dots,r_m}.
\end{equation}

The sequence \eqref{eq:presentation} induces the five-term exact sequence in cohomology
\[\xymatrix@C-6pt{ 0\ar[r] & H^1(G,\F_p)\ar[r]^-{\mathrm{inf}_{G,S}^1} & H^1(S,\F_p)\ar[r]^-{\mathrm{res}_{S,R}^1} 
& H^1(R,\F_p)^G\ar[r]\ar[r]^-{\trg} & H^2(G,\F_p) \ar[r]^-{\mathrm{inf}_{G,S}^2} & H^2(S,\F_p).} \]
As $S$ is free, $H^2(S,\F_p)=0$ (cf.~\cite[Ch.~I, \S~3.4]{serre}). As $S$ and $G$ have the same minimal number of generators, the map $\mathrm{inf}_{G,S}^1\colon H^1(G,\F_p)\to H^1(S,\F_p)$ is an isomorphism. Then $\trg\colon H^1(R,\F_p)^G\to H^2(G,\F_p)$ is an isomorphism as well.
We set $V=H^1(G,\F_p)$.

\subsection{Pairings}\label{ssec:pairings}
Recall that a pairing $A\times B\to\F_p$ of vector spaces over $\F_p$ is said to be \emph{perfect}
if it induces isomorphisms $A\cong B^*$ and $B\cong A^*$.
The transgression map induces the natural perfect pairing
\begin{equation}\label{eq:pairing R}
 \langle\slot,\slot\rangle_R \colon \frac{R}{R^p[R,S]}\times H^2(G)\longrightarrow \F_p,
\end{equation}
which is given by $\langle \bar r,\alpha\rangle=({\trg}^{-1}(\alpha))(r)$ (cf.~\cite[\S~III.9, page 233]{nsw:cohm}).

On the other hand, one has an isomorphism of finite $\F_p$-vector spaces $G/G_{(2)}\cong S/S_{(2)}\cong V^*$.
Let $\{x_1,\ldots,x_d\}$ be a minimal generating system for $S$.
Recall from \S \ref{ssec:free} that we may identify the tensor algebra ${T}_\bullet(V^*)$
with the graded free algebra $\FpX$, with $X=\{X_1,\ldots,X_d\}$, and the quotient $S_{(2)}/S_{(3)}$
embeds in $T^2(V^*)$, via the monomorphism of spaces given by $[x_i,x_j]\mapsto X_iX_j-X_jX_i$,
and also $x_i^2\mapsto X_i^2$ if $p=2$.

We may identify $V^*\otimes V^*$ with $(V\otimes V)^*$ via the pairing 
\[\begin{split}
&(V^*\otimes V^*)\times (V\otimes V)\longrightarrow \F_p,\\ &(f\otimes g, u\otimes v)\longmapsto(-1)f(u)g(v)
  \end{split}\] 
in accordance with Koszul sign convention (cf.~\cite[\S~1.5.3]{lodval}).
Thus, we may consider $S_{(2)}/S_{(3)}$ as a subspace of $(V\otimes V)^*$ via the composition $S_{(2)}/S_{(3)}\hookrightarrow T^2(V^*)\cong(V^{\otimes2})^*$, and one has
the pairing
\begin{equation}\label{eq:pairing SxVxV}
 \langle\slot,\slot\rangle_S \colon S_{(2)}/S_{(3)} \times (V\otimes V) \longrightarrow \F_p
\end{equation}
(note that this pairing is not perfect).

Since $R\subseteq S_{(2)}$, one has that $R^p[R,S]\subseteq S_{(3)}$, and one may define the morphism
$f\colon R/R^p[R,S]\to S_{(2)}/S_{(3)}$ induced by the inclusion.
Then the following holds, where we denote by $\cup$ the map $V\otimes V\to H^2(G,\F_p)$ induced by the cup product via 
bilinearity.

\begin{prop}\label{prop:commutative diagram} 
Let $G$ be a finitely generated pro-$p$ group with minimal presentation \eqref{eq:presentation}.
The diagram of pairings
\begin{equation}
\label{eq:commutative diagram pairings}
 \xymatrix@R=1.1truecm{ S_{(2)}/S_{(3)}\times (V\otimes V)\ar@<6ex>[d]_-{\cup}\ar[rr] && \F_p\ar@{=}[d] \\
R/R^p[R,S]\times H^2(G,\F_p)\ar[rr]\ar@<6ex>[u]^-{f} && \F_p }
\end{equation}
is commutative, i.e., $\langle f(\bar{r}),\alpha\rangle_S = \langle \bar{r}, \cup(\alpha)\rangle_R$,
for every $r\in R$, $\alpha\in V\otimes V$.
\end{prop}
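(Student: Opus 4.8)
The plan is to verify the identity $\langle f(\bar r),\alpha\rangle_S = \langle \bar r,\cup(\alpha)\rangle_R$ by evaluating both sides on a basis of $V\otimes V$ and showing that each computes one and the same \emph{Magnus coefficient} of $r$. Fix the minimal generating system $\{x_1,\dots,x_d\}$ of $S$ and let $\{\chi_1,\dots,\chi_d\}$ be the basis of $V=H^1(S)\cong H^1(G)$ dual to $\{\bar x_1,\dots,\bar x_d\}$, so that $\{X_1,\dots,X_d\}$ is the corresponding dual basis of $V^*$ under $\gr(S)\hookrightarrow T_\bullet(V^*)$, with $X_i$ identified with $\bar x_i\in S/S_{(2)}=V^*$. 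Since both pairings are linear in the second slot (on the right through the linearity of $\cup$), it suffices to treat $\alpha=\chi_i\otimes\chi_j$. Writing the Magnus expansion $\mu(s)=1+\sum_k\chi_k(s)X_k+\sum_{k,l}\epsilon_{kl}(s)X_kX_l+\cdots$, the functions $\epsilon_{kl}\colon S\to\F_p$ record the degree-two coefficients; comparing degree-two parts of $\mu(st)=\mu(s)\mu(t)$ yields the cochain identity $\epsilon_{kl}(st)=\epsilon_{kl}(s)+\epsilon_{kl}(t)+\chi_k(s)\chi_l(t)$, which will be the linchpin of the argument.

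First I would compute the left-hand side. For $r\in R\subseteq S_{(2)}$ all linear coefficients $\chi_k(r)$ vanish, so the class $f(\bar r)\in S_{(2)}/S_{(3)}$ has initial form $\sum_{k,l}\epsilon_{kl}(r)\,X_k\otimes X_l\in V^*\otimes V^*$. Pairing against $\chi_i\otimes\chi_j$ through the Koszul-signed evaluation $(X_k\otimes X_l,\chi_i\otimes\chi_j)=-X_k(\chi_i)X_l(\chi_j)=-\delta_{ki}\delta_{lj}$ gives immediately $\langle f(\bar r),\chi_i\otimes\chi_j\rangle_S=-\epsilon_{ij}(r)$.

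Next I would identify the right-hand side with the same quantity by computing the inverse transgression. The class $\cup(\chi_i\otimes\chi_j)=\chi_i\cup\chi_j$ is represented by the inhomogeneous $2$-cocycle $c(g,h)=\chi_i(g)\chi_j(h)$ on $G$. Since $H^2(S,\F_p)=0$, its inflation $\pi^*c$ is a coboundary $d\beta$ for some $1$-cochain $\beta\colon S\to\F_p$, and the description of transgression attached to the five-term sequence gives $\trg^{-1}(\chi_i\cup\chi_j)=\beta|_R$. The cochain identity above reads exactly $d(-\epsilon_{ij})=\pi^*c$, so $\beta=-\epsilon_{ij}$ is an admissible choice; any two choices of $\beta$ differ by an element of $\Hom(S,\F_p)$, which vanishes on $R\subseteq\Phi(S)=S_{(2)}$. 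Hence $\beta|_R=-\epsilon_{ij}|_R$ and $\langle \bar r,\chi_i\cup\chi_j\rangle_R=\bigl(\trg^{-1}(\chi_i\cup\chi_j)\bigr)(r)=-\epsilon_{ij}(r)$.

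Comparing the two computations, both pairings equal $-\epsilon_{ij}(r)$, which establishes commutativity; note that the sign built into the Koszul convention used to embed $S_{(2)}/S_{(3)}$ into $(V\otimes V)^*$ is precisely what makes the two signs agree. I expect the main obstacle to be sign bookkeeping rather than anything conceptual: one must fix a single consistent convention for the inhomogeneous cup product, the coboundary $d$, and the transgression, and check that the description $\trg^{-1}[c]=\beta|_R$ with $\pi^*c=d\beta$ holds with that convention and matches the normalization of $\langle\cdot,\cdot\rangle_R$ coming from \eqref{eq:pairing R} — this is the one place where a stray sign would silently destroy the match. The remaining points, namely that $\epsilon_{ij}$ descends to $S_{(2)}/S_{(3)}$ (via $S_{(3)}=\{g\mid\mu(g)-1\in I(X)^3\}$) and that $\beta|_R$ is automatically $G$-invariant (being in the image of $\trg^{-1}$), are routine consequences of $R\subseteq S_{(2)}$.
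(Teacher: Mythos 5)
Your proof is correct, but it takes a genuinely different route from the paper's. The paper's proof writes $r$ in its normal form modulo $S_{(3)}$ --- a product $\prod_i x_i^{2a_i}\prod_{i<j}[x_i,x_j]^{b_{ij}}\cdot r'$ with $r'\in S_{(3)}$ (the power factors occurring only for $p=2$) --- reads off $f(\bar r)$ from that decomposition, and then quotes Vogel's thesis (Proposition~1.3.2 there) for the values $\trg^{-1}(\chi_k\cup\chi_l)(r)$ on such normal forms; in particular it splits into the cases $p=2$ and $p$ odd. You instead encode the degree-two data of $r$ uniformly in the Magnus coefficients $\epsilon_{kl}$ and compute the inverse transgression from scratch: your cochain identity $\epsilon_{kl}(st)=\epsilon_{kl}(s)+\epsilon_{kl}(t)+\chi_k(s)\chi_l(t)$ says precisely that $d(-\epsilon_{ij})$ equals the inflation of the standard cocycle representing $\chi_i\cup\chi_j$, so the classical description of transgression ($\trg(\beta|_R)=[c]$ whenever $\beta$ is a continuous $1$-cochain on $S$ with $d\beta$ equal to the inflation of $c$; cf.~\cite[Prop.~1.6.6]{nsw:cohm}) gives $\langle\bar r,\chi_i\cup\chi_j\rangle_R=-\epsilon_{ij}(r)$, matching $\langle f(\bar r),\chi_i\otimes\chi_j\rangle_S=-\epsilon_{ij}(r)$ from the Koszul-signed evaluation. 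What your route buys: it is self-contained (the coboundary computation you perform is essentially a proof of the fact the paper imports from Vogel) and needs no parity case split, since $\epsilon_{kl}$ treats powers and commutators at once --- e.g.\ for $p=2$ one checks $\binom{2a}{2}\equiv a$, recovering the paper's $a_i\bar x_i^2$ terms. What the paper's route buys: the explicit normal form makes both pairings' values on basis elements ($\pm b_{kl}$, $-\binom{p}{2}a_k$) visible at a glance and feeds directly into the presentation-theoretic bookkeeping used later in Section~\ref{sec:pairings and duals}. The one caveat you flag --- the sign convention in the transgression description --- is real but harmless: the NSW convention carries no extra sign, it is the same convention underlying the paper's pairing \eqref{eq:pairing R}, and the Koszul $(-1)$ built into $\langle\cdot,\cdot\rangle_S$ is exactly what makes the two sides agree, as you correctly anticipate.
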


\begin{proof}
Let $\{\chi_1,\ldots,\chi_d\}\subseteq H^1(S,\F_p)$ be a basis dual to $\{x_1,\ldots, x_d\}$, so $\chi_i(x_j)=\delta_{ij}$. 
Then $\{\chi_i\otimes \chi_j, 1\leq i, j\leq d\}$ is a basis for $H^1(S,\F_p)\otimes H^1(S,\F_p)$.
Therefore, it is enough to show that
$\langle f(\bar{r}), \chi_k\otimes \chi_l \rangle = \langle \bar{r}, \chi_k\cup \chi_l\rangle$
for every $r\in R$ and $1\leq k,l\leq d$.

Note that $r$  may be uniquely written as
\[
\label{decomposition modulo S3}
r=
\begin{cases}
\displaystyle \prod_{i=1}^d x_i^{2a_i} \prod_{ i<j}[x_i,x_j]^{b_{ij}}\cdot r', &\text{if } p=2,\\
\displaystyle  
\displaystyle  \prod_{i<j}[x_i,x_j]^{b_{ij}} \cdot r', &\text {if } p\not=2,
\end{cases}
 \]
where $a_i,b_{ij}\in \{0,1,\ldots,p-1\}$ and $r'\in S_{(3)}$ (cf.~\cite[Proposition~1.3.2]{vogel:thesis}). 
Then one can see that 
\[  f(\bar{r})= 
 \begin{cases}
 \sum_{i}a_i \bar{x}_i^2 +  \sum_{i<j}^d b_{ij}(\bar{x}_i\bar{x}_j - \bar{x}_j\bar{x}_i) &\text {if } p=2,\\
 \sum_{i<j}^d b_{ij}(\bar{x}_i\bar{x}_j - \bar{x}_j\bar{x}_i) &\text {if } p\not=2.
 \end{cases} \]
Hence
\[ \langle f(\bar{r}), \chi_k\otimes \chi_l \rangle =
 \begin{cases} 
-b_{kl} &\text{if } k<l,\\
b_{kl} &\text{if } k>l,\\
 0 &\text{if } k=l \text{ and } p\neq2\\
-a_k &\text{if } k=l \text{ and } p=2.
 \end{cases}\]
On the other hand
\[  \langle \bar{r}, \cup(\chi_k\otimes \chi_l)\rangle= {\rm trg}^{-1}(\chi_k\cup \chi_l)(r). \]
The result then follows from \cite[Proposition~1.3.2]{vogel:thesis}.
\end{proof}

\begin{prop}\label{prop:perfectpairing wedge}
 The pairing \eqref{eq:pairing SxVxV}  and the commutative diagram \eqref{eq:commutative diagram pairings}
induce the commutative diagrams of perfect pairings
\begin{equation}\label{eq:pairings S wedgeV}
 \xymatrix{ S_{(2)}/S_{(3)}\times \Lambda_2(V)\ar@<6ex>[d]_-{\cup}\ar[r] & \F_p\ar@{=}[d] &
S_{(2)}/S_{(3)}\times \mathcal{S}_2(V)\ar@<6ex>[d]_-{\cup}\ar[r] & \F_2\ar@{=}[d] \\
R/R^p[R,S]\times H^2(G,\F_p)\ar[r]\ar@<6ex>[u]^-{f} & \F_p &
R/R^2[R,S]\times H^2(G,\F_2)\ar[r]\ar@<6ex>[u]^-{f} & \F_2 }
\end{equation}
(the left-hand side one if $p$ is odd, the right-hand side one if $p=2$).
\end{prop}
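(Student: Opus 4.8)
The plan is to treat both diagrams uniformly: in either case the top pairing \eqref{eq:pairing SxVxV} is just the evaluation pairing obtained by restricting the perfect duality $(V\otimes V)^*\times(V\otimes V)\to\F_p$ to the subspace $S_{(2)}/S_{(3)}\subseteq(V\otimes V)^*$, and I will make it perfect by passing to a suitable quotient of $V\otimes V$. The formal input is that for any subspace $U\subseteq W^*$ of the dual of a finite-dimensional space $W$, evaluation $U\times W\to\F_p$ has trivial left kernel and right kernel exactly the annihilator $U^\perp\subseteq W$, hence always induces a perfect pairing $U\times(W/U^\perp)\to\F_p$. Applying this with $U=S_{(2)}/S_{(3)}$ and $W=V\otimes V$ produces a perfect pairing for free; the real content is to identify $K:=(S_{(2)}/S_{(3)})^\perp$ and to recognise $W/K$ as $\Lambda_2(V)$ when $p$ is odd and as $\mathcal{S}_2(V)$ when $p=2$.

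First I would pin down the image of $S_{(2)}/S_{(3)}$ inside $V^*\otimes V^*$. Since $S$ is free, $\gr\F_p\dbl S\dbr\cong\F_p\langle X\rangle$ is the enveloping algebra of the \emph{free} restricted Lie algebra on the $\overline{x_i}$ (see \S\ref{ssec:free}), so its degree-$2$ part is spanned by the brackets $[X_i,X_j]=X_iX_j-X_jX_i$ for $p$ odd, together with the squares $X_i^2$ when $p=2$; for odd $p$ the $p$-th powers land in $S_{(3)}$ and drop out. This is exactly the content of the decomposition of $f(\bar r)$ recorded in the proof of Proposition~\ref{prop:commutative diagram}, now read for an arbitrary element of $S_{(2)}$. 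A short computation with the dual bases $\{X_i\}$ and $\{\chi_i\}$ then shows that $\sum_{k,l} c_{kl}\,\chi_k\otimes\chi_l$ is annihilated by every $[X_i,X_j]$ exactly when $c_{ij}=c_{ji}$, while (for $p=2$) pairing against the $X_i^2$ additionally forces $c_{ii}=0$. Hence $K$ is the space of symmetric tensors for $p$ odd, giving $W/K\cong\Lambda_2(V)$, and $K=\gen{\chi_i\otimes\chi_j+\chi_j\otimes\chi_i\mid i<j}$ for $p=2$, giving $W/K\cong\mathcal{S}_2(V)$ (using $-1=1$ in $\F_2$). Evaluating the pairing matrix on these bases exhibits it as nonsingular, confirming perfectness directly.

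Next I would check that the vertical cup-product map descends along the same quotient. Graded-commutativity \eqref{eq:cup product} gives $\chi_i\cup\chi_j=-\chi_j\cup\chi_i$ and $\chi_i\cup\chi_i=0$ for $p$ odd, so $\cup$ kills the symmetric tensors $K$ and factors through $\Lambda_2(V)$; for $p=2$ it gives $\chi_i\cup\chi_j=\chi_j\cup\chi_i$, so $\cup$ kills $K=\gen{\chi_i\chi_j+\chi_j\chi_i}$ and factors through $\mathcal{S}_2(V)$. Thus the top pairing and the map $\cup$ factor through the \emph{same} quotient $W/K$, and the commutativity of \eqref{eq:commutative diagram pairings} descends: for $\alpha\in K$ the identity $\langle f(\bar r),\alpha\rangle_S=\langle\bar r,\cup(\alpha)\rangle_R$ reads $0=0$ (left side since $f(\bar r)\in S_{(2)}/S_{(3)}$ and $\alpha\in K=(S_{(2)}/S_{(3)})^\perp$, right side since $\cup$ annihilates $K$), so the relation passes to $W/K$. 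As the bottom pairing \eqref{eq:pairing R} is already perfect via the transgression isomorphism, this assembles into the asserted commutative diagrams of perfect pairings.

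The main obstacle I anticipate is precisely the identification step: verifying that the image of $S_{(2)}/S_{(3)}$ in $V^*\otimes V^*$ is genuinely the bracket span (resp. bracket-and-square span), handling the $p$-th power subtlety correctly, and then matching the annihilator $K$ with the kernel of the canonical projection onto $\Lambda_2(V)$ or $\mathcal{S}_2(V)$ while keeping track of the Koszul sign (which is harmless over $\F_p$ for this kernel computation). Everything else is formal duality together with the already-established commutativity of Proposition~\ref{prop:commutative diagram} and the graded-commutativity of the cup product.
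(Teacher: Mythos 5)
Your proof is correct and takes essentially the same route as the paper's: identify the basis $\{[X_i,X_j]\}$ (together with $\{X_i^2\}$ when $p=2$) of $S_{(2)}/S_{(3)}$, compute its annihilator $K$ in $V\otimes V$ to recognise the quotient as $\Lambda_2(V)$ resp.\ $\mathcal{S}_2(V)$, and descend the commutativity of \eqref{eq:commutative diagram pairings} using graded-commutativity of the cup product. The only cosmetic differences are that you re-derive the basis of $S_{(2)}/S_{(3)}$ from the free restricted Lie algebra structure of $L(S)$ where the paper cites \cite[Lemma~5.3]{jochen:massey}, and that you obtain perfectness from the general fact that evaluation induces a perfect pairing $U\times(W/U^{\perp})\to\F_p$ rather than from the paper's explicit dual-basis computation and dimension count.
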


\begin{proof}
The identification ${T}_\bullet(V^*)=\F_p\langle X\rangle$ induces an embedding of the quotient $S_{(2)}/S_{(3)}$
in the space of homogeneous polynomials of degree two.

Assume first that $p$ is odd. It is well-known (cf.~\cite[Lemma 5.3]{jochen:massey}) that
the set of commutators $\{[X_i,X_j],1\leq i<j\leq d\}$ is a basis for $S_{(2)}/S_{(3)}$.
On the other hand, the set
\[
\left\{ \chi_i\otimes\chi_j-\chi_j\otimes\chi_i\mid1\leq i< j\leq d\}\cup\{\chi_i\otimes\chi_j+\chi_j\otimes\chi_i ,1\leq i\leq j\leq d\right\}
\]
is a basis for $V\otimes V$.
Since 
\begin{eqnarray*}
&& \langle [X_i,X_j],\chi_h\otimes\chi_k-\chi_k\otimes\chi_h\rangle_S=2\delta_{ih}\delta_{jk},\\
&& \langle [X_i,X_j],\chi_h\otimes\chi_k+\chi_k\otimes\chi_h\rangle_S=0
\end{eqnarray*}
for every $1\leq i<j\leq d$ and $1\leq h\leq k\leq d$, the orthogonal space $(S_{(2)}/S_{(3)})^\perp$
in $V\otimes V$ is the subspace generated by $\chi_i\otimes\chi_j+\chi_j\otimes\chi_i$, $1\leq i\leq j\leq d$,
and the pairing \eqref{eq:pairing SxVxV} induces the perfect pairing
\[ S_{(2)}/S_{(3)}\times \frac{V\otimes V}{(S_{(2)}/S_{(3)})^\perp}=
S_{(2)}/S_{(3)}\times \Lambda_2(V)\longrightarrow\F_p.\]
Note that $\dim(S_{(2)}/S_{(3)})=\dim(\Lambda_2(V))=d(d-1)/2$.

In case $p=2$, then $\{[X_i,X_j],X_i^2,1\leq i<j\leq d\}$ is a basis for $S_{(2)}/S_{(3)}$ (cf.~\cite[Lemma~5.3]{jochen:massey}).
After replacing $\Lambda_2(V)$ with $\mathcal{S}_2(V)$, an analogous argument 
shows that the pairing 
\[ S_{(2)}/S_{(3)}\times \frac{V\otimes V}{(S_{(2)}/S_{(3)})^\perp}=
S_{(2)}/S_{(3)}\times \mathcal{S}_2(V)\longrightarrow\F_2\]
is perfect.

Since the cup product is graded-commutative, the commutativity of diagram \eqref{eq:pairings S wedgeV}
follows by the commutativity of diagram \eqref{eq:commutative diagram pairings}.
\end{proof}

\begin{thm}\label{thm:perfectpairing}
 Let $G$ be a pro-$p$ group with minimal presentation \eqref{eq:presentation}.
The following are equivalent.
\begin{enumerate}
\item[(i)] The pairing \eqref{eq:pairing R} induces a perfect pairing $RS_{(3)}/S_{(3)} \times H^2(G,\F_p)\to \F_p$.
\item[(ii)] The cup product $\cup\colon H^1(G,\F_p)\otimes H^1(G,\F_p)\to H^2(G,\F_p)$ is surjective.
\item[(iii)] One has the equality $R\cap S_{(3)}=R^p[R,S]$.
\end{enumerate}
\end{thm}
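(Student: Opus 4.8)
The plan is to show that each of the three conditions is equivalent to the injectivity of the map $f\colon R/R^p[R,S]\to S_{(2)}/S_{(3)}$ introduced before the statement. First I would record the two elementary facts that $\im f=RS_{(3)}/S_{(3)}$ and $\ker f=(R\cap S_{(3)})/R^p[R,S]$, both immediate from the description of $f$ as the map induced by the inclusion $R\hookrightarrow S$. Since $R^p[R,S]\subseteq S_{(3)}$, the second fact says at once that $f$ is injective if and only if $R\cap S_{(3)}=R^p[R,S]$, which is precisely condition (iii).

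For the equivalence (i) $\Leftrightarrow$ (iii) I would use that the pairing \eqref{eq:pairing R} is perfect, hence nondegenerate in the first variable. Via $f$ the space $RS_{(3)}/S_{(3)}$ is the quotient $(R/R^p[R,S])/\ker f$, and a perfect pairing descends to a (well-defined, automatically perfect) pairing on such a quotient exactly when the quotient map is an isomorphism, that is, when $\ker f=0$. Thus \eqref{eq:pairing R} induces a perfect pairing on $RS_{(3)}/S_{(3)}\times H^2(G,\F_p)$ if and only if condition (iii) holds.

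The core of the proof is the equivalence (ii) $\Leftrightarrow$ (iii), and here I would pass to the perfect pairing of \eqref{eq:pairings S wedgeV} between $S_{(2)}/S_{(3)}$ and $W$, where $W=\Lambda_2(V)$ if $p$ is odd and $W=\mathcal{S}_2(V)$ if $p=2$. The decisive computation is the annihilator of the subspace $U:=RS_{(3)}/S_{(3)}$ inside $S_{(2)}/S_{(3)}$: combining the commutativity of \eqref{eq:pairings S wedgeV} with the nondegeneracy of \eqref{eq:pairing R} in the $H^2$-variable, a class in $W$ lies in $U^\perp$ precisely when its image under the cup product vanishes in $H^2(G,\F_p)$, so that $U^\perp=\ker\bigl(\cup\colon W\to H^2(G,\F_p)\bigr)$. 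This identification is the only genuinely non-formal point; everything surrounding it is bookkeeping.

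From this the conclusion is a dimension count over $\F_p$. Perfectness of the pairing on $S_{(2)}/S_{(3)}\times W$ gives $\dim U=\dim W-\dim U^\perp=\dim\im\cup$, while $U=\im f$ together with perfectness of \eqref{eq:pairing R} gives $\dim U=\dim(R/R^p[R,S])-\dim\ker f=\dim H^2(G,\F_p)-\dim\ker f$. Comparing the two expressions yields $\dim\im\cup=\dim H^2(G,\F_p)-\dim\ker f$, so $\cup$ is surjective if and only if $\ker f=0$, i.e.\ (ii) $\Leftrightarrow$ (iii). The main obstacle I anticipate is exactly the annihilator identification above: carrying it out cleanly requires matching the internal grading and the (anti)symmetrization $V\otimes V\to W$ against the Koszul sign convention built into the pairing \eqref{eq:pairing SxVxV}, so that the cup product and the pairing line up with consistent signs.
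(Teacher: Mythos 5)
Your proposal is correct and takes essentially the same route as the paper: both arguments rest on the commutative diagram of pairings from Propositions \ref{prop:commutative diagram} and \ref{prop:perfectpairing wedge} together with the perfectness of the pairings \eqref{eq:pairing R} and \eqref{eq:pairings S wedgeV}, and both conclude by counting dimensions. Your reduction of all three conditions to the injectivity of $f$, via the annihilator identification $U^\perp=\ker\bigl(\cup\colon W\to H^2(G,\F_p)\bigr)$ and rank--nullity, simply makes explicit the ``linear algebra considerations'' by which the paper asserts that the middle row of its three-floor diagram \eqref{eq:commutative diagram 3floors} is a perfect pairing.
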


\begin{proof}
 Let $W$ be the subspace of $H^2(G,\F_p)$ generated by the cup products of elements of $V$.
From \eqref{eq:pairings S wedgeV} one obtains the diagram of pairings
\begin{equation}
\label{eq:commutative diagram 3floors}
 \xymatrix@R=0.7truecm{ S_{(2)}/S_{(3)}\times \Lambda_2(V)\ar@{->>}@<6ex>[d]\ar[rr] && \F_p\ar@{=}[d] \\
RS_{(3)}/S_{(3)}\times W\ar@{^{(}->}@<6ex>[d]\ar@{^{(}->}@<5ex>[u]\ar[rr] && \F_p\ar@{=}[d] \\
R/R^p[R,S]\times H^2(G,\F_p)\ar[rr]\ar@{->>}@<5ex>[u] && \F_p }
\end{equation}
(with $\mathcal{S}_2(V)$ instead of $\Lambda_2(V)$ if $p=2$) which commutes by Proposition~\ref{prop:perfectpairing wedge}.
Also, the top and bottom rows are perfect pairings.
Therefore, by linear algebra considerations, also the middle row is a perfect pairing,
and the claim follows by dimension counting.
\end{proof}

From Theorem~\ref{thm:perfectpairing} one may recover the following, which is also a particular consequence of 
\cite[Examples~8.3~(1)]{EM2} (there the authors deal with a more general situation).

\begin{coro}\label{coro:RcapF3} 
Let $G$ be a pro-$p$ group with minimal presentation \eqref{eq:presentation}
and with quadratic $\F_p$-cohomolgy.
Then $R\cap S_{(3)}=R^p[R,S]$, and all initial forms of the defining relations are of degree two.
\end{coro}

\begin{proof}
The equality $R\cap S_{(3)}=R^p[R,S]$ follows directly from Theorem~\ref{thm:perfectpairing}.
If $\{r_1,\ldots,r_m\}$ is a set of defining relations of $G$, then $\{\overline{r_1},\ldots,\overline{r_m}\}$ is a basis of the space $R/R^p[R,S]$ (here $\overline{r_i}$ denotes the coset $r_i\cdot R^p[R,S]$ for every $i=1,\ldots,m$).
Since $R/R^p[R,S]=R/R\cap S_{(3)}$, one has $r_i\notin S_{(3)}$ for every $i=1,\ldots,m$, that is, the initial form of $r_i$ has degree $2$.
\end{proof}

\begin{rem}\label{rem:CEM}
In the case $G$ is the maximal pro-$p$ Galois group $G_{\K}(p)$ of a field $\K$ containing a root of unity of order $p$
then the three equivalent statements of Theorem~\ref{thm:perfectpairing} hold.
Moreover, in this case Corollary~\ref{coro:RcapF3} is a well-known result 
(cf.~\cite[Theorem~8.3]{cem:quot} and \cite[Corollary 3.5]{Ng}, and \cite{MSp} for $p=2$).
\end{rem}





\subsection{Quadratic duals}

Let $G$ be a finitely generated pro-$p$ group, and assume that its $\F_p$-cohomology algebra is quadratic.
Then we may write $H^\bullet(G,\F_p)$ as the quotient 
\begin{equation}\label{eq:quad cohom}
H^\bullet(G,\F_p)=\frac{{T}_\bullet(V)}{(\Omega)}, \qquad V=H^1(G,\F_p),\quad \Omega\subseteq V\otimes V.
\end{equation}
In this case it is possible to describe the quadratic dual of $H^\bullet(G,\F_p)$ in terms
of the initial forms of the relations.

\begin{thm}(Theorem E) \label{thm:quaddual} 
Let \eqref{eq:presentation} be a minimal presentation of a finitely generated pro-$p$-group $G$,
and assume that $H^\bullet(G,\F_p)$ is quadratic as in \eqref{eq:quad cohom}.
Then one has the equality $\Omega^\perp=RS_{(3)}/S_{(3)}$.
In particular, Theorem E holds: if $d$ is the minimal number of generators of $G$ and $X=\{X_1,\ldots,X_d\}$, one has an isomorphism of quadratic $\F_p$-algebras
\begin{equation}\label{eq:quadratic duality thm}
 H^\bullet(G,\F_p)^!\cong \frac{\F_p\langle X\rangle}{\mathcal{R}},
\end{equation}
where $\mathcal{R}$ is the two-sided ideal generated by the initial forms of the defining relations of $G$.
\end{thm}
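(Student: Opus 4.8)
The plan is to first establish the subspace equality $\Omega^\perp = RS_{(3)}/S_{(3)}$ inside $V^*\otimes V^* = (V\otimes V)^*$, and then deduce the claimed presentation of the quadratic dual by comparing the two-sided ideals these spaces generate. Since $H^\bullet(G,\F_p)$ is quadratic as in \eqref{eq:quad cohom}, its degree-$2$ component is $(V\otimes V)/\Omega$, and because the algebra is generated in degree $1$ this component is identified with $H^2(G,\F_p)$ through the cup product; hence $\Omega = \ker\big(\cup\colon V\otimes V\to H^2(G,\F_p)\big)$. This identification of $\Omega$ as a kernel is what links the purely cohomological side to the group-theoretic pairings of Subsection \ref{ssec:pairings}.

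First I would prove the inclusion $RS_{(3)}/S_{(3)}\subseteq\Omega^\perp$. Every class in $RS_{(3)}/S_{(3)}$ has the form $f(\bar r)$ for some $r\in R$, so it suffices to check that $\langle f(\bar r),\omega\rangle_S = 0$ for every $\omega\in\Omega$. By the commutativity of diagram \eqref{eq:commutative diagram pairings} (Proposition \ref{prop:commutative diagram}) one has $\langle f(\bar r),\omega\rangle_S = \langle \bar r,\cup(\omega)\rangle_R$, and since $\omega\in\Omega=\ker(\cup)$ the right-hand side vanishes. The global Koszul sign in the identification of $S_{(2)}/S_{(3)}$ with a subspace of $(V\otimes V)^*$ is irrelevant here, as an annihilator is insensitive to an overall nonzero scalar. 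This yields the inclusion.

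Next I would upgrade the inclusion to an equality by counting dimensions. On one hand, $\dim_{\F_p}\Omega^\perp = \dim_{\F_p}(V\otimes V)-\dim_{\F_p}\Omega = \dim_{\F_p} H^2(G,\F_p)$. On the other hand, quadraticity makes the cup product surjective, so Theorem \ref{thm:perfectpairing} supplies the perfect pairing $RS_{(3)}/S_{(3)}\times H^2(G,\F_p)\to\F_p$, whence $\dim_{\F_p}(RS_{(3)}/S_{(3)}) = \dim_{\F_p} H^2(G,\F_p)$. The two spaces therefore have the same dimension, and together with the inclusion this forces $\Omega^\perp = RS_{(3)}/S_{(3)}$.

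Finally, to obtain \eqref{eq:quadratic duality thm}, recall that by Definition \ref{defi:quadratic algebras} one has $H^\bullet(G,\F_p)^! = T_\bullet(V^*)/(\Omega^\perp) = \F_p\langle X\rangle/(\Omega^\perp)$, so it remains to identify the two-sided ideal generated by $\Omega^\perp$ with $\mathcal{R}$. Since $H^\bullet(G,\F_p)$ is quadratic, Corollary \ref{coro:RcapF3} gives $R\cap S_{(3)} = R^p[R,S]$, and Corollary \ref{coro:RcapF3 quadraticrelations} then guarantees that every defining relation $r_i$ has initial form $\rho_i$ of degree $2$, i.e.\ $\rho_i = r_iS_{(3)}\in S_{(2)}/S_{(3)}$. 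Because $R$ is generated as a normal subgroup of $S$ by $r_1,\dots,r_m$ and conjugation acts trivially on the abelian quotient $S_{(2)}/S_{(3)}$ (for any $s\in S$ the element $s r_i s^{-1} r_i^{-1}$ lies in $[S,S_{(2)}]\subseteq S_{(3)}$), the image $RS_{(3)}/S_{(3)}$ is exactly the span of $\rho_1,\dots,\rho_m$. Hence the two-sided ideal $(\Omega^\perp)=(RS_{(3)}/S_{(3)})$ coincides with the ideal $\mathcal{R}$ generated by the initial forms, which gives the desired isomorphism. I expect the main obstacle to be the bookkeeping rather than any deep point: one must keep the several identifications consistent --- between $R/R^p[R,S]$ and its image $RS_{(3)}/S_{(3)}$, between $S_{(2)}/S_{(3)}$ and its copy inside $V^*\otimes V^*$, and the sign convention of the pairing --- so that the clean inclusion-plus-dimension argument is genuinely valid rather than merely formal.
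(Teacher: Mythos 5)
Your proof is correct and takes essentially the same route as the paper's: the inclusion $RS_{(3)}/S_{(3)}\subseteq\Omega^\perp$ via the commutative diagram of pairings from Proposition \ref{prop:commutative diagram} with $\Omega=\ker(\cup)$, the dimension count $\dim(RS_{(3)}/S_{(3)})=\dim H^2(G,\F_p)=\dim(V\otimes V)-\dim\Omega$ using the perfect pairing of Theorem \ref{thm:perfectpairing}, and then the identification $T_\bullet(V^*)=\F_p\langle X\rangle$. Your last paragraph --- invoking Corollaries \ref{coro:RcapF3} and \ref{coro:RcapF3 quadraticrelations} and the triviality of conjugation on $S_{(2)}/S_{(3)}$ to show that $RS_{(3)}/S_{(3)}$ is exactly the span of the degree-two initial forms $\rho_1,\dots,\rho_m$ --- correctly fills in a step the paper's proof leaves implicit.
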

\begin{proof}
Recall that one has the chain of inclusions \[RS_{(3)}/S_{(3)}\subseteq S_{(2)}/S_{(3)}\subseteq (V\otimes V)^*.\]
By Proposition~\ref{prop:commutative diagram} and Corollary \ref{coro:RcapF3} the diagram of pairings
\[
\xymatrix@C-8pt{\gen{\slot,\slot}_1:&(V\otimes V)^*\times(V\otimes V) \ar[rr] \ar@{->>}@<6ex>[d]_{\cup} && \F_p \\
 \gen{\slot,\slot}_2:&RS_{(3)}/S_{(3)}\times H^2(G) \ar@{^{(}->}@<6ex>[u] \ar[rr] && \F_p\ar@{=}[u]}
 \]
commutes, and by Theorem~\ref{thm:perfectpairing} also the bottom row is a perfect pairing.
If $\alpha\in\Omega$ then one has
$\langle\bar r,\alpha\rangle_1=\langle\bar r,\cup(\alpha)\rangle_2=\langle\bar r,0\rangle_2=0$
for every $\bar r\in RS_{(3)}/S_{(3)}$.
Thus, $RS_{(3)}/S_{(3)}$ is contained in the orthogonal space $\Omega^\perp$.
On the other hand, one has 
\[\dim(RS_{(3)}/S_{(3)})=\dim(H^2(G))=\dim(V\otimes V)-\dim(\Omega),\]
so that $RS_{(3)}/S_{(3)}=\Omega^\perp$.
Then the isomorphism \eqref{eq:quadratic duality thm} follows by the identification ${T}_\bullet(V^*)=\F_p\langle X\rangle$.
\end{proof}

For the following definition we borrow the terminology from \cite[\S~4]{KLM}.
\begin{defn}\label{def:quadratic presentation}
Let $G$ be a finitely generated pro-$p$ group with minimal presentation \eqref{eq:gen rel pres}.
\begin{itemize}
 \item[(i)] The presentation is called \emph{quadratic} (with respect to the Zassenhaus filtration) if
the initial forms $\rho_1,\dots,\rho_m$ of the defining relations are of degree two.
 \item[(ii)] The presentation is called \emph{quadratically defined} if it is quadratic and
 $\gr\FppG$ is isomorphic to $\FpX/\mathcal{R}$, with $X=\{X_1,\ldots,X_d\}$ and $\mathcal{R}=(\rho_1,\ldots,\rho_m)$.
\end{itemize}
\end{defn}


The Zassenhaus filtration of the free pro-$p$ group $S$ induces the filtration $\{R\cap S_{(n)}\}$,
with $n\geq2$.
Let $I(R)_{\ind}$ be the graded object induced by such filtration --- 
i.e., $$I(R)_{\ind}=\bigoplus_{n\geq2}(R\cap S_{(n)})/(R\cap S_{(n+1)}).$$ 
Then $I(R)_{\ind}$ is a restricted ideal of the restricted Lie algebra $L(F)$, and one has the short exact sequence
of restrcted Lie algebras
\begin{equation}\label{eq:inducedalgebras presentation}
 \xymatrix{ 0\ar[r]& I(R)_{\ind}\ar[r] & L(S)\ar[r] & L(G)\ar[r] &0 }, 
\end{equation}
i.e., $G_{(n)}=S_{(n)}/R\cap S_{(n)}$ for each $n\geq1$ (cf. \cite[Proof of Theorem~2.12]{jochen:massey}).
In general for a finitely generated pro-$p$ group $G$ with a quadratic presentation
the induced ideal $I(R)_{\ind}$ and the restrcted ideal $\mathfrak{r}$ generated by the initial forms of the relations
may be different. 

\begin{lem}\label{prop:compare ideals}
 Let $G$ be a finitely generated pro-$p$ group with quadratic presentation \eqref{eq:presentation}.
Then $I(R)_{\ind}\supseteq\mathfrak{r}$.
\end{lem}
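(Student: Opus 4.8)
The plan is to show that each generator of $\mathfrak{r}$ already lies in $I(R)_{\ind}$, after which the inclusion follows from the fact that $I(R)_{\ind}$ is itself a restricted ideal of $L(S)$ and $\mathfrak{r}$ is by definition the smallest restricted ideal containing these generators.

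First I would make explicit the embedding $I(R)_{\ind}\hookrightarrow L(S)$ coming from the short exact sequence \eqref{eq:inducedalgebras presentation}. In degree $n$ it identifies $(R\cap S_{(n)})/(R\cap S_{(n+1)})$ with its image in $S_{(n)}/S_{(n+1)}$ through $g(R\cap S_{(n+1)})\mapsto gS_{(n+1)}$; this map is well defined and injective precisely because $R\cap S_{(n+1)}=(R\cap S_{(n)})\cap S_{(n+1)}$. Next, I would invoke the hypothesis that the presentation is quadratic: each defining relation $r_i$ has initial form $\rho_i$ of degree two, i.e.\ $r_i\in S_{(2)}\setminus S_{(3)}$ and $\rho_i=r_iS_{(3)}\in S_{(2)}/S_{(3)}$. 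Since $r_i\in R$, we have $r_i\in R\cap S_{(2)}$, and therefore $\rho_i$ is exactly the image of the nonzero class $r_i(R\cap S_{(3)})\in (R\cap S_{(2)})/(R\cap S_{(3)})$ under the embedding above. Hence $\rho_i\in I(R)_{\ind}$ for every $i$.

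Finally, recalling that $I(R)_{\ind}$ is a restricted ideal of $L(S)$ while $\mathfrak{r}$ is the smallest restricted ideal of $L(S)$ containing $\rho_1,\dots,\rho_m$, the presence of all the $\rho_i$ inside the restricted ideal $I(R)_{\ind}$ forces $\mathfrak{r}\subseteq I(R)_{\ind}$. I do not anticipate a genuine obstacle in this direction: the only points needing care are the compatibility of filtration intersections that makes the degreewise embedding well defined and the identification of the preimage of each $\rho_i$. The substantive difficulty resides entirely in the reverse inclusion, which in general fails and holds precisely when the presentation is quadratically defined.
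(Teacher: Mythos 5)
Your proposal is correct and follows essentially the same route as the paper's proof: you identify the degree-two component $(I(R)_{\ind})_2=R/(R\cap S_{(3)})$ (using $R\subseteq S_{(2)}$), observe that the quadraticity hypothesis places each initial form $\rho_i$ there, and conclude by the minimality of $\mathfrak{r}$ among restricted ideals containing the $\rho_i$. The only difference is that you spell out the well-definedness and injectivity of the degreewise embedding $I(R)_{\ind}\hookrightarrow L(S)$, which the paper leaves implicit.
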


\begin{proof}
Since $G$ has a quadratic presentation, the initial forms $\overline{r_i}$ of the defining relations of $G$ are of degree $2$.
In particular, $\overline{r_i}\in R/R\cap S_{(3)}$ for every $i\in\{1,\ldots,m\}$.
Since $R/R\cap S_{(3)}=(I(R)_{\ind})_2$, and since $\mathfrak{r}$ is the restricted ideal generated by the $\overline{r_i}$'s, the claim follows.
\end{proof}

Thus, a finitely presented pro-$p$ group $G$ with a quadratic presentation is quadratically defined
if, and only if, $\mathfrak{r}=I(R)_{\ind}$.

Theorem~\ref{thm:quaddual} implies Theorem~F.


\begin{proof}[Proof of Theorem~F]
Let $\mathcal{R}'$ and $\mathcal{R}$ be the two-sided graded ideals of $\FpX$ generated by $I(R)_{\ind}$ and $\mathfrak{r}$ respectively. By Theorem \ref{thm:quaddual},
\[
H^\bullet(G,\F_p)^!\cong\frac\FpX{\mathcal{R}}.
\]
On the other hand, by Proposition \ref{prop:presentation U} applied to the sequence \eqref{eq:inducedalgebras presentation},
\[
\gr\FppG\cong\frac\FpX{\mathcal{R}'}.
\]
Since by Lemma \ref{prop:compare ideals} $\mathfrak{r}\subseteq I(R)_{\ind}$, it follows that $\mathcal{R}\subseteq\mathcal{R}'$, so there is a graded algebra epimorphism
\[
H^\bullet(G,\F_p)^!\cong\frac\FpX{\mathcal{R}}\twoheadrightarrow\frac\FpX{\mathcal{R}'}\cong\gr\FppG.
\]
Since the elements of degree $2$ in $\mathfrak{r}$ and $I(R)_{ind}$ coincide, being the elements of $RS_{(3)}/S_{(3)}$, the previous epimorphism is an isomorphism in degrees $0$, $1$ and $2$.

If $G$ has a quadratically defined presentation, then by definition $\mathcal{R}=\mathcal{R}'$ and $\gr\FppG\cong\FpX/{\mathcal{R}}$.
\end{proof}

%

%
%
\begin{rem}
Let $p$ be an odd prime number and $G$ a finitely generated pro-$p$ group.
We have $H^1(G,\F_p)=H^1(G/G_{(2)},\F_p)$ and we shall identify (see \S \ref{ssec:cohom of pro-p})
\[
H^1(G,\F_p)^*= G/G_{(2)}.
\]

Let ${\mathfrak {lie}}(H^1(G,\F_p)^*)$ be the free restricted Lie algebra on the $\F_p$-vector space $H^1(G,\F_p)^*$. Then we have a natural restricted Lie algebra epimorphism
\[
\varphi_G\colon {\mathfrak {lie}}(H^1(G,\F_p)^*) \twoheadrightarrow L(G),
\] 
which restricts to the identity in degree $1$, and to the commutator map
\[
[ \slot , \slot]\colon G/G_{(2)} \wedge G/G_{(2)} \to G_{(2)}/G_{(3)}
\]
in degree $2$.
Let 
\[
\delta_G \colon H^2(G,\F_p)^* \to H^1(G,\F_p)^* \wedge H^1(G,\F_p)^* 
\]
be the map dual to the cup product $\cup \colon H^1(G,\F_p)\wedge H^1(G,\F_p) \to H^2(G,\F_p)$. From \cite[page 268]{Hill}, we have the following exact sequence
\[
1 \longrightarrow (G_{(2)}/G_{(3)})^* \stackrel{d}{\longrightarrow}H^1(G,\F_p)\wedge H^1(G,\F_p) \stackrel{\cup}{\longrightarrow} H^2(G,\F_p),
\]
where $d$ is dual to the commutator map $[\slot ,\slot ]\colon G/G_{(2)} \wedge G/G_{(2)} \to G_{(2)}/G_{(3)}$.  Thus $\mathrm{Im} (\delta_G)= \ker ([\slot , \slot])$. Hence we have a natural epimorphism
\[
\Phi_G\colon {\mathfrak {lie}}(H^1(G,\F_p)^*)/(\mathrm{Im}(\delta_G)) \twoheadrightarrow L(G).
\]
Following \cite{SW}, the \emph{holonomy restricted Lie algebra} ${\mathfrak H}(G)$ of $G$ is the holonomy restricted Lie algebra of the cohomology ring $H^\bullet (G,\F_p)$, that is,
\[
{\mathfrak H}(G)={\mathfrak H}(H^\bullet(G,\F_p)): = {\mathfrak {lie}}(H^1(G,\F_p)^*)/(\mathrm{Im}(\delta_G)).
\]
We also say that $G$ is ($p$-)\emph{graded-formal} if the natural epimorphism 
\[\Phi_G\colon {\mathfrak H}(G)\twoheadrightarrow L(G)
\]
is an isomorphism of graded restricted Lie algebras. Note also that, by \cite[Proposition 3.4]{SW}, one has
\[
{\sU}({\mathfrak H}(G)) = \overline{H^\bullet(G,\F_p)}^!,
\] 
where $\overline{H^\bullet(G,\F_p)}$ is the \emph{quadratic closure} of $H^\bullet(G,\F_p)$, defined as 
\[
\overline{H^\bullet(G,\F_p)}=\Lambda^\bullet(H^1(G,\F_p))/(\ker\cup).
\]
By Jennings's Theorem \ref{thm:LG grFpG}, the natural epimorphism $\Phi_G$ induces a natural epimorphism
\[
\overline{H^\bullet(G,\F_p)}^! \twoheadrightarrow \gr\FppG.
\]
Our Theorem B says that, for $p$ odd, any elementary type pro-$p$ group $G$ is $p$-graded formal.
 \end{rem}

\bibliographystyle{amsalpha}
\bibliography{koszul1}


\end{document}